\documentclass[reqno,a4paper]{amsart}
\usepackage[english,activeacute]{babel}
\usepackage{amssymb,amsmath,amsthm,amsfonts,mathrsfs,hyperref}
\usepackage{stackengine}
\usepackage{marvosym}
\usepackage{bm}
\usepackage[font=footnotesize]{caption}
\usepackage{upgreek}
\usepackage[T3,T1]{fontenc} 
\usepackage{tikz}
\usetikzlibrary{intersections,shapes,trees,arrows,spy,positioning,decorations,arrows.meta,decorations.pathreplacing,patterns,calc}

\usepackage{tikz-cd}
\usepackage{tkz-euclide}
\usepackage{pgfplots}
\pgfdeclarelayer{background}
\pgfsetlayers{background,main}
\tikzset{
    my box/.style = {
        , line cap = round
        , line join = round
    }
}

\newcommand{\highlight}[3]{
    \path [my box, line width = #1, draw = #2] #3;

    \pgfmathsetmacro{\innerlinewidth}{0.9 * #1}
    \path [my box, line width = \innerlinewidth, draw = #2!20] #3;
}


\newcommand{\nwedge}{\textrm{\rotatebox[origin=c]{180}{$\curlywedge$}}}

\newcommand{\defeq}{\mathrel{\mathop:}=}
\newcommand{\eqdef}{=\mathrel{\mathop:}}

\newcommand{\sroot}[1]{\scalebox{.4}{\begin{tikzpicture}
	\begin{scope}[every node/.style={circle,thick,draw}, minimum size=20pt]
	\node (h1) at (0,0) {\scalebox{2}{${#1}$}};
	\end{scope}
	\end{tikzpicture}}}
\newcommand{\ssroot}[1]{\scalebox{.8}{\sroot{#1}}}

\makeatletter																
\DeclareRobustCommand{\vbar}[1]{{\vbar@accent{#1}}}							
\newcommand\vbar@accent[1]{
	\overset{%
		\text{\smash[b]{\rule[-0.4ex]{0.5pt}{0.8ex}}}
	}{#1}%
}
\makeatother																
\newcommand{\chil}[1]{\grave{#1}}	
\newcommand{\chir}[1]{\acute{#1}}
\newcommand{\chim}[1]{\vbar{#1}}
\newcommand{\chimell}{\vbar{\! \ell}}

\newcommand{\rootdel}{{\scalebox{.5}{\begin{tikzpicture}
		\begin{scope}[every node/.style={regular polygon,regular polygon sides=7,thick,draw}, minimum size=20pt]
		\node (h1) at (0,0) {};
		\end{scope}
		\node at (0,0) {\scalebox{2}{$\times$}} ;				
		\end{tikzpicture}}}}
	
\newcommand{\rootben}{{\scalebox{.5}{\begin{tikzpicture}
			\begin{scope}[every node/.style={regular polygon,regular polygon sides=7,thick,draw}, minimum size=20pt]
			\node at (0,0) {};
			\end{scope}
			\draw (0,0) circle (1.5mm)  [fill=white!100];
			\end{tikzpicture}}}}
		
\newcommand{\rootsin}{{\scalebox{.5}{\begin{tikzpicture}
			\begin{scope}[every node/.style={regular polygon,regular polygon sides=7,draw}, minimum size=15pt]
			\node (h21) at (0,0) {};
			\end{scope}
			\end{tikzpicture}} }}	
\usepackage{stmaryrd}
\usepackage{multirow}

\tikzset{
	mystyle/.style={
		circle,
		inner sep=0pt,
		text width=6mm,
		align=center,
		draw=black,
		fill=white,
		minimum size = 27
	}
}

\pgfdeclarepatternformonly{soft horizontal lines}{\pgfpointorigin}{\pgfqpoint{100pt}{1pt}}{\pgfqpoint{100pt}{3pt}}%
{
	\pgfsetstrokeopacity{0.3}
	\pgfsetlinewidth{0.1pt}
	\pgfpathmoveto{\pgfqpoint{0pt}{0.5pt}}
	\pgfpathlineto{\pgfqpoint{100pt}{0.5pt}}
	\pgfusepath{stroke}
}

\pgfdeclarepatternformonly{soft crosshatch}{\pgfqpoint{-1pt}{-1pt}}{\pgfqpoint{4pt}{4pt}}{\pgfqpoint{3pt}{3pt}}%
{
	\pgfsetstrokeopacity{0.3}
	\pgfsetlinewidth{0.4pt}
	\pgfpathmoveto{\pgfqpoint{3.1pt}{0pt}}
	\pgfpathlineto{\pgfqpoint{0pt}{3.1pt}}
	\pgfpathmoveto{\pgfqpoint{0pt}{0pt}}
	\pgfpathlineto{\pgfqpoint{3.1pt}{3.1pt}}
	\pgfusepath{stroke}
}

\usepackage{makerobust}
\MakeRobustCommand\rotatebox
\MakeRobustCommand\reflectbox
\usepackage[numbers,square,sort]{natbib}
\usepackage{tabularx}
\usepackage{enumitem}  
\usepackage[T1]{fontenc}
\usepackage{url}  

\textwidth 160mm
\textheight 240mm
\topmargin       -10mm
\evensidemargin  -1mm
\oddsidemargin   -1mm
\parindent 0pt
\setlength{\captionmargin}{12pt}
\theoremstyle{plain}
\newtheorem{theorem}{Theorem}[section]
\newtheorem{lem}[theorem]{Lemma}
\newtheorem{proposition}[theorem]{Proposition}
\newtheorem{coro}[theorem]{Corollary}
\theoremstyle{definition}
\newtheorem{definition}[theorem]{Definition}

\theoremstyle{remark}
\newtheorem{remark}[theorem]{Remark}

\numberwithin{equation}{section}


\renewcommand{\leq}{\leqslant}
\renewcommand{\geq}{\geqslant}


\newcommand{\pitch}{{\textrm{\reflectbox{\rotatebox[origin=c]{180}{$\pitchfork$}}}}}
\newcommand{\pA}{{\reflectbox{\rotatebox[origin=c]{270}{\textrm{\Leftscissors}}}}}




\newcommand{\Nb}  {{\mathbb N}}
\newcommand{\Rb}  {{\mathbb R}}

\newcommand{\Zb}  {{\mathbb Z}}
\newcommand{\Pb}  {{\mathbb P}}
\renewcommand{\P}{\mathbb P}




\newcommand{\Cs} {{\mathcal C}}

\newcommand{\Fs} {{\mathcal F}}
\newcommand{\Gs} {{\mathcal G}}
\newcommand{\Hs} {{\mathcal H}}

\newcommand{\Os} {{\mathcal O}}

\newcommand{\Rs} {{\mathcal R}}

\newcommand{\Ts} {{\mathcal T}}
\newcommand{\Te} {{\mathscr T}}


\DeclareMathOperator{\Attr}{Attr}
\newcommand{\al}{\alpha}

\renewcommand{\phi}{\varphi}

\newcommand{\la}{\lambda}

\newcommand{\E}{\mathbb E}
\newcommand{\R}{\mathbb R}

\newcommand{\N}{\mathbb N}

\newcommand\F{\mbox{I\kern-2pt F}}
\newcommand\1{{\bf 1}}

\newcommand\tf{{\mathfrak t}}

\newcommand{\asg}{\alpha}

\newcommand{\dd}{\, \mathrm{d}}

\newcommand{\brea}{{{a}}}

\newcommand{\trip}{{\scalebox{.17}{\begin{tikzpicture}
\begin{scope}[every node/.style={circle,thick,draw}, minimum size=1pt]
				\node[fill=black!100, scale=0.5] (h00) at (0,0) {};
				\end{scope}
                    \begin{scope}[>={latex[black]}, every node/.style={fill=white,circle},
					every edge/.style={draw=black,line width=1.2mm}]
					
					\path [-] (0,0) edge (0,-0.5);
					\path [-] (0,0) edge (0.433,0.166);
					\path [-] (0,0) edge (-0.433,0.166);
					\end{scope}
					
	\end{tikzpicture}}}}
\newcommand{\ymin}{{y_{{\rm min}}^{}}}
\newcommand{\ymax}{{y_{{\rm max}}^{}}}
\newcommand{\sharrow}[1][3pt]{\mathrel{%
   \hbox{\rule[\dimexpr\fontdimen22\textfont2-.2pt\relax]{#1}{.4pt}}%
   \mkern-4mu\hbox{\usefont{U}{lasy}{m}{n}\symbol{41}}}}
   
\newcommand{\cpr}{{c^{\rotatebox[origin=c]{-90}{$\sharrow$}}}}
\newcommand{\cpv}[1]{{c_{#1}^{\rotatebox[origin=c]{-90}{$\sharrow$}}}}
\newcommand{\chr}{{\hat{c}^{\rotatebox[origin=c]{-90}{$\sharrow$}}}}
\newcommand{\chv}[1]{{\hat{c}_{#1}^{\rotatebox[origin=c]{-90}{$\sharrow$}}}}
\newcommand{\chrl}[1]{{\hat{c}^{\rotatebox[origin=c]{-90}{$\sharrow$},{#1}}}}
\newcommand{\Tp}{{\mathscr{T}}}

\newcommand{\ba}{{\bar{a}}}
\newcommand{\bal}{{\bar{\alpha}}}
\title[Lines of descent in the mutation--selection model with pairwise interaction]{Lines of descent in the deterministic mutation--selection model with pairwise interaction}
\author{Ellen Baake         \and
	Fernando Cordero \and
	Sebastian Hummel
}

\address{Faculty of Technology, Bielefeld University, Box 100131, 33501 Bielefeld, Germany}
\email{ebaake@techfak.uni-bielefeld.de} 
\email{fcordero@techfak.uni-bielefeld.de}
\email{shummel@techfak.uni-bielefeld.de} 
\date{\today}%

\begin{document}

\begin{abstract}
We consider the mutation--selection differential equation with pairwise interaction (or, equivalently, the diploid mutation--selection equation) and establish the corresponding ancestral process, which is a random tree and a variant of the ancestral selection graph. The formal relation to the forward model is given via duality. To make the tree tractable, we prune branches upon mutations, thus reducing it to its informative parts. The hierarchies inherent in the tree are encoded systematically via tripod trees with weighted leaves; this leads to the stratified ancestral selection graph. The latter also satisfies a duality relation with the mutation--selection equation. Each of the dualities provides a stochastic representation of the solution of the differential equation. This allows us to connect the equilibria and their bifurcations to the long-term behaviour of the ancestral process. Furthermore, with the help of the stratified ancestral selection graph, we obtain explicit results about the ancestral type distribution in the case of unidirectional mutation.
\end{abstract}
\maketitle

\section{Introduction}
Models of population genetics describe the evolution of biological populations under the interplay of various forces such as mutation, selection, recombination, and migration. Traditionally, they come in two categories, deterministic and stochastic. Deterministic approaches assume that the population is so large that random fluctuations may be neglected; the resulting models are (ordinary or partial) differential equations or (discrete-time) dynamical systems, describing the evolution in the usual forward direction of time. This has led to an elaborate body of theory, which is comprehensively surveyed in the monograph by B\"urger \cite{burger2000mathematical}. In contrast, stochastic approaches take into account the fluctuations due to finite population size; the resulting stochastic processes have a firm place in probability theory. Here, the corresponding \emph{ancestral processes}, which describe the ancestry of a sample of individuals from a population at  present, play an eminent role. This retrospective view is linked to the prospective one via duality relations, which have proven to serve as versatile tools to investigate the models in question. This area of research is comprehensively surveyed in the monographs by \citet{etheridge2011some} and \citet{durrett2008probability}.

\smallskip

The deterministic models of population genetics are related to their stochastic counterparts via a dynamical law of large numbers (also known as mean-field limit). Nevertheless, the two  model classes have largely led separate lives for many decades. Recently, however, a beginning has been made to build new bridges between them by introducing the genealogical picture into the deterministic equations~\citep{Baake_Baake_16,BCH17,Cordero2017590,Baake2018}. Here, the ancestral lines of an individual from the present population are described via (random) \emph{ancestral graphs}. This leads to stochastic representations of the solutions of the differential equations, thus providing new insight into the  dynamics and the long-term behaviour.  

\smallskip

Specifically, this program has been started for the \emph{mutation--selection differential equation}, one of the most well-known deterministic models of population genetics. It describes the interplay between selection (which tends to concentrate the population towards the set of fit(test) types) and mutation (which tends to randomise the population).
For example, the mutation--selection equation with unidirectional mutation and \emph{genic selection} applies when  haploid\footnote{that is, carrying only one copy of the genetic information} individuals reproduce independently of each other; the corresponding genealogical structure is the \emph{ancestral selection graph (ASG)} \citep{KroNe97,NeKro97,Ne99}. The differential equation displays a bifurcation of its equilibria; this could be explained by long-term properties of a variant of the ancestral selection graph \citep{BCH17,Cordero2017590,Baake2018}.

\smallskip

Beyond shedding new light on the solution of the differential equation, tracing back the ancestral lines may be used to determine the type distribution of the ancestors of today's population. They are of considerable interest -- after all, it is them that have been successful in the long run. The ancestral type distribution is not directly accessible in the differential equation context.  For the case of genic selection, it was investigated in \cite{HRWB,BaakeGeorgii}, building on concepts originally developed for multitype branching processes \cite{Jagers89,Jagers92}. The analysis was later complemented by an approach based on a variant of the ASG \cite{Cordero2017590,BCH17,Baake2018}. 

\smallskip

In this article, we extend the results for the mutation--selection equation with genic selection to the case with pairwise interaction between individuals. In the latter setting,  the reproduction rate depends on the type of a uniformly chosen partner. Biologically, this is a special case of \emph{frequency-dependent} selection. The resulting equation is equivalent to the \emph{diploid mutation--selection equation}, which describes individuals that carry two copies of the genetic information rather than one as in the haploid case. 

\smallskip

With pairwise interaction, the right-hand side is cubic as opposed to  quadratic  in  the case of genic selection. This leads to a richer bifurcation structure. In particular, one now observes bistability in certain parameter regions. While this is well known, the corresponding ancestral processes are largely unexplored and require new concepts. Starting from ideas in~\cite{Ne99}, we extend the ASG to the case with pairwise interaction. This results in a specific random tree marked with mutations. For our purposes, only the embedded tree structure together with the mutations is relevant; it is captured by what we call the \emph{embedded ASG}. This process satisfies a duality relation with the solution of the mutation--selection differential equation; thus leading to a stochastic representation of the solution. The underlying principle seems to be robust and has also been exploited in other stochastic models~\citep{gonzalezcasanova2018,Cordero2019}\citep[Ch.~5]{dawson2014spatial}. 

\smallskip 

The embedded ASG as such is rather unwieldy and using it to derive type and ancestral type distribution is difficult. To make things tractable, we \emph{prune} the tree upon mutations, thus reducing it to its informative parts; and we order the remaining graph and exploit a natural hierarchy in its leaves to further stratify it. This  results in a \emph{tripod tree with weighted vertices}, which we  call  \emph{stratified ASG}. The corresponding process is in duality with the forward dynamic and  is specifically tailored to determine the type distribution of a sample from the population at present. The stratified ASG is our workhorse to understand the bifurcations of the equilibria from an ancestral perspective. Indeed, it will turn out that the random genealogical trees have very different properties in the various parameter regimes.

\smallskip 

The derivation of the \emph{ancestral type distribution} requires tracing back ancestral lines beyond the time at which the type of the sample is determined. This is captured by a modification of the stratified ASG. With its help, we obtain an explicit expression for the ancestral type distribution in the biologically relevant case of unidirectional mutation to the deleterious type.

\smallskip

Our motivation to study ancestral graphs comes from population genetics. But these objects exhibit interesting connections to issues more rooted in pure probability theory. For example, our ancestral processes can be embedded into the general framework of recursive tree processes as systemically studied by \citet{MSS2018}, in parallel to our work. \citeauthor{MSS2018} identify a large class of differential equations that arise as mean-field limits of interacting particle systems on the complete graph. They apply their theory to the cooperative branching model with deaths (see also~\citep{Mach2017}), which corresponds to a special case of our mutation--selection model with interaction. The analyses via recursive tree processes and via the stratified ASG provide complementary insight. \citet{MSS2018} provide a detailed comparison of their work with ours in their Section~$2.1$. We will spell out the similarities and differences as we go along. 

\smallskip

 The article is organised as follows. Section~\ref{sec:mainresults} contains the formulation of our constructions and our main results. Proofs and more detailed results are deferred to the subsequent sections. Section~\ref{sec:detmodelandlaw} contains the proofs related to the stability analysis of the mutation--selection equation, and the proof of the law of large numbers for the underlying finite-population model. A detailed description of our ancestral processes, and the proofs for the connections between them are given in Section~\ref{sec:proofs:asg}. In Section~\ref{sec:applicationIproofs} we exploit our constructions to prove the results that lead to the probabilistic interpretation of the bifurcation structure of the mutation-selection equation. Finally,  the results related to the distribution of the ancestors of today's population are proved in Section~\ref{sec:ancestraltypedistriubtionproofs}. 

\section{Main results and constructions} \label{sec:mainresults}
\subsection{The mutation-selection equation with pairwise interaction}\label{sec:mainresults:sub:detmodel}
The general form of the mutation-selection
equation (including an arbitrary number of types) goes back to \citet{wrightadaptionselection} and is  intensively discussed by \citet{crow1956}, \citet{Akin79}, and \citet{Hofbauer85}. Starting in the 1990s, it has become a popular object of research in the physics literature, see \cite{Baakereview} for a review. The main interest is targeted towards the long-term behaviour. There may be one or several (stable or unstable) equilibria, but more complicated dynamical behaviour, such as  periodic solutions \cite{Akin79,Hofbauer85,Baake1997}, is also possible.  The model is also used for the analysis of gene-frequency data, see, e.g., \cite{Weghorn2019} and references therein. Yet another viewpoint  comes from evolutionary game theory. There, each genotype specifies a strategy played in a repeated game; the payoffs  determine the change in frequency of the strategies within the population over time (see, e.g., \citep[Ch.~22]{Hofbauer_Sigmund_03}). 

\smallskip

We now describe the version with pairwise interaction that forms the basis of our analysis. It is an ordinary differential equation (ODE) that describes the type-frequency evolution in an infinite population composed of two types, say type~$0$ and type~$1$. More precisely, if $y_0\in [0,1]$ is the initial frequency of type~$1$ and $y(t;y_0)$ is its frequency at time~$t$, then $y(t;y_0)$ is the solution of the ODE \begin{equation}\begin{aligned}
\frac{\dd y}{\dd t}(t)&=-y(t)\big(1-y(t)\big)\big[s+\gamma\big(1-y(t)\big)\big]+u\nu_1\big(1-y(t)\big)-u\nu_0y(t)\eqdef F(y(t))\
\end{aligned}\label{eq:dlimitdiffeq}\end{equation} 
that satisfies~$y(0)=y_0$, where~$u>0$,~$\gamma,s\geq 0$, and~$\nu_0,\nu_1\in[0,1]$ so that~$\nu_0+\nu_1=1$.
The underlying model is described as follows. Both types reproduce at a so-called \emph{neutral} rate of~$1$. On top of this, type~$0$ has a selective advantage reflected by an additional reproduction rate; we refer to type~$0$ as the \emph{fit} or \emph{beneficial} type, and to type~$1$ as \emph{unfit} or \emph{deleterious}.
The additional reproduction rate has two contributions: one depends on the current type frequencies, and one is independent of it. The former is called \emph{interactive} reproduction, and the latter \emph{selective} reproduction. The rate of selective reproduction is~$s$. Interactive reproduction occurs at rate~$\gamma (1-y(t;y_0))$, where~$\gamma$ is the interaction parameter. The interaction is called \emph{pairwise} because the rate reflects that a type-0 individual reproduces if a randomly-chosen partner is also of type 0. This is a special case of \emph{frequency-dependent selection}. Both types may mutate at rate~$u$, the resulting type being~$0$ (resp.~$1$) with probability~$\nu_0$ (resp.~$\nu_1$). In the ODE, the last two terms correspond to mutation; the first term describes the loss of type-$1$ individuals if type-$0$ individuals reproduce selectively or interactively  and the offspring replaces a type-1 individual. The neutral reproduction does not enter the equation since its net contribution is~$-(1-y(t))y(t)+y(t)(1-y(t))=0$; as a matter of fact, therefore, the same ODE results for any neutral reproduction rate~$c\geq 0$. 

\begin{remark}
Let us briefly connect the mutation-selection equation with pairwise interaction to the  \emph{diploid mutation-selection equation}. In the latter, one identifies the individuals in a diploid population with their genotypes, where a genotype is a pair~$(i,j), i,j \in \{0,1\}$, and $i$ and $j$ are combined independently. The corresponding reproduction rates $w_{ij}$ are~$w_{00}=1+2s+\gamma$, $w_{01}=w_{10}=1+s$, and~$w_{11}=1$; this choice of parameters corresponds to the case where type~$0$ is (partially) recessive, that is, it needs another~$0$ to fully play out its advantage (see also \cite{Baake1997}). The ODE~\eqref{eq:dlimitdiffeq} then describes the proportion of 1's averaged over all genotypes in the population.
\end{remark}

\begin{remark}\label{rem:cooperativebranching}
For a special choice of parameters, the ODE~\eqref{eq:dlimitdiffeq} corresponds to the mean-field limit (or law of large numbers) of the cooperative branching model on the complete graph as investigated by \citet{MSS2018} (see also \citep{Mach2017}). In the underlying interacting particle system, every pair of particles produces, independently at a rate proportional to some~$\alpha$, a new particle at another site if this site is empty; this is called a cooperative branching event. Every particle independently dies at rate~$1$. The authors identify occupied sites with~$1$ and free sites with~$0$. Our notation translates to theirs by interchanging the roles of type~$0$ and~$1$ (their particles are our fit individuals) and by setting~$\gamma=\alpha$,~$u=1$,~$\nu_0=0$,~$\nu_1=1$, and~$s=0$. This leads to the mean-field equation \cite[Eq. (1.36)]{MSS2018}.
\end{remark}

Existence and uniqueness of a global solution to~\eqref{eq:dlimitdiffeq} such that $y(0)=y_0\in\Rb$, and the positive invariance of $[0,1]$ (which is the biologically relevant domain) follow from standard theory. We will be particularly interested in the long-term behaviour of such a solution, which is determined by the equilibria of~\eqref{eq:dlimitdiffeq} and their stabilities.

\subsection*{Equilibria and bifurcation structure} \label{sec:equilibriabifurcationstructure}
\begin{figure}[t]
	\centering\scalebox{.5}{
		\includegraphics{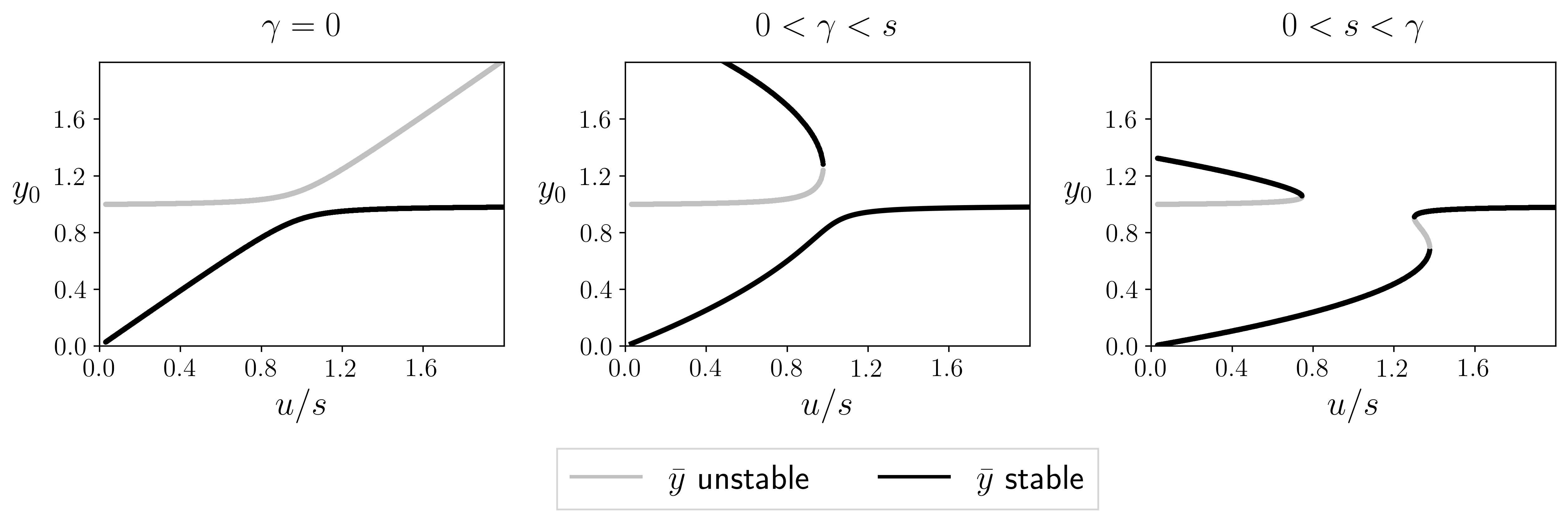}}
	\caption[Equilibria of mutation--selection equation with pairwise interaction ($\nu_0>0$)]{Equilibria of~\eqref{eq:dlimitdiffeq} evaluated numerically as functions of~$u/s$ for~$\nu_0=1/100$ and~$s=1/30$. The left, middle, and right cases correspond to~$\gamma=0$, $\gamma=1/40$, and $\gamma=1/10$, respectively.}
	\label{fig:plotnu0}
\end{figure}
We assume throughout that $u>0$. We now analyse the equilibria of the ODE \eqref{eq:dlimitdiffeq}, namely the (real) roots of $F$. We also discuss the stability of the equilibria in~$[0,1]$. We say that an equilibrium $\bar{y}$ is stable\footnote{also referred to as locally asymptotically stable or attracting} (resp. stable in an interval~$I\ni\bar{y}$) whenever there is $\varepsilon>0$ such that for all $y_0\in [\bar{y}-\varepsilon,\bar{y}+\varepsilon]$ (resp. $y_0\in[\bar{y}-\varepsilon,\bar{y}+\varepsilon]\cap I$), $\lim_{t\to\infty}y(t;y_0)=\bar{y}$. We say that an equilibrium $\bar{y}\in[0,1]$ is attracting from the left (resp. right) if $\bar{y}$ is stable in $(0,\bar{y}]$ (resp. $[\bar{y},1)$). An equilibrium $\bar{y}$ is unstable or repelling if there is $\varepsilon>0$ such that for some $y_0\in [\bar{y}-\varepsilon,\bar{y}+\varepsilon]$, $y(t;y_0) \notin [\bar{y}-\varepsilon,\bar{y}+\varepsilon]$ for some $t$, with the obvious extensions to instability in an interval, and being repelling to the left and right.
\smallskip

 Since~$F$ is continuous with $F(0)\geq 0$ and $F(1)\leq 0$, the ODE \eqref{eq:dlimitdiffeq} has at least one equilibrium in~$[0,1]$. Let~$\ymin$ and~$\ymax$ be the smallest and largest equilibrium in~$[0,1]$, respectively. Let us briefly discuss the case of bidirectional mutation, i.e. $\nu_0\in (0,1)$ (we will not explicitly discuss the case $\nu_0=1$, the biologically reasonable regime is $\nu_0\ll 1$). In this case, $\ymin$ and~$\ymax$ are both in~$(0,1)$ and they are attracting from the left and right, respectively. If there is an additional equilibrium between them, $F$ has a positive derivative at this point so that this equilibrium is unstable. Figure~\ref{fig:plotnu0} illustrates the equilibria and their stabilities if $\nu_0\in (0,1)$. For the remainder of Section \ref{sec:mainresults:sub:detmodel}, we assume $\nu_0=0$. 

\begin{proposition}[Equilibria and stability]\label{prop:equilibriumpoints}
	Suppose~$\nu_0=0$ and~$u>0$. If~$\gamma=s=0$, then $1$ is the only equilibrium and it is stable. If~$\gamma=0$ and $s>0$, the ODE~\eqref{eq:dlimitdiffeq} has equilibria $1$ and $y_-=u/s$. The minimum of the two is always stable in $[0,1]$; the other one (if distinct) is unstable (see Fig. \ref{fig:plots_l_gamma}, left). If $\gamma>0$ and $s\geq 0$, define
\[
  u^\star={u^\star}(\gamma,s)\defeq\frac{1}{\gamma}\left(\frac{s+\gamma}{2}\right)^2.
\]
Then for $u>{u^\star}$, $1$ is the only equilibrium and it is stable. For~$u\leqslant {u^\star}$, the ODE has equilibria
\begin{equation}\label{eq:equilibria}
1,\quad\bar{y}_-=\frac{\gamma+s-\sqrt{(\gamma+s)^2-4u\gamma}}{2\gamma} \quad \text{and}\quad
\bar{y}_+=\frac{\gamma+s+\sqrt{(\gamma+s)^2-4u\gamma}}{2\gamma}. 	
\end{equation}
The positions of the equilibria and their stabilities are summarised in Table~\ref{table:stability}.
\end{proposition}
\begin{table}[h!]
	\begin{center}
	\scalebox{.81}{
		\begin{tabular}{|c|c|c|c|c|c|c|}
			\hline
			& $\gamma<s$ & $\gamma=s$ & $\gamma>s$ \\
			\hline
			$u<s$ & $0<\bm{\bar{y}_-}<1<\bar{y}_+$ & $0<\bm{ \bar{y}_-}<1 <\bar{y}_+$ & $0<\bm{ \bar{y}_-}<1 <\bar{y}_+$\\
			$u=s$ & $\bm{ 1}=\bm{\bar{y}_{-}}<\bar{y}_+=s/\gamma$  & $\bm{ 1}=\bm{ \bar{y}_-}=\bm{ \bar{y}_+}$  & $0<\bm{ \bar{y}_-}=\bm{s/\gamma}  <1=\bar{y}_+$\\
			$s < u < {u^\star} $ &  $\mathbf{ 1}<\bar{y}_-<\bar{y}_+$&  --- &  $0<\bm{ \bar{y}_-}<\bar{y}_+<\mathbf{ 1}$ \\
			$u={u^\star}$ & $\bm{ 1}\leq \bar{y}_-=\bar{y}_+=(\gamma+s)/(2\gamma)$ & $\bm{ 1}=\bm{ \bar{y}_-}=\bm{ \bar{y}_+} $  & $\frac{1}{2}\leq \bar{y}_-=\bar{y}_+= (\gamma+s)/(2\gamma)<\bm{ 1}$ \\
			\hline
	\end{tabular}}
\end{center} \caption{Summary of the stability of the equilibria for $\nu_0=0$ and $0<u\leq{u^\star}$. Bold indicates equilibria that are stable in~$[0,1]$.}\label{table:stability}
\end{table}
The proof of Proposition \ref{prop:equilibriumpoints} will be given in Section~\ref{sec:detmodelandlaw}; the function $\gamma\mapsto u^\star(\gamma,s)$ is illustrated in Fig.~\ref{fig:u_star}. 

\smallskip

We now describe the long-term behaviour of the solutions of \eqref{eq:dlimitdiffeq}. To this end, it is convenient to introduce subsets of $(0,\infty)\times [0,\infty)^2$ that partition the parameter set according to the number of equilibria in~$[0,1]$ if~$\nu_0=0$ (see Figs.~\ref{fig:u_star} and~\ref{fig:plots_l_gamma}),
 \begin{equation}
\begin{aligned}
 \Theta_1^{}&=\{(u,s,\gamma):\ u>{u^\star}  \ \text{or} \   \gamma\leq s\leq u\leq{u^\star}\}, & \qquad \Theta_{2}^a&=\{(u,s,\gamma):  u<s \ \text{or}\ u=s<\gamma\},\\
  \Theta_{2}^b&=\{(u,s,\gamma):\ s<\gamma\ \text{and} \ u={u^\star} \},  & \qquad \Theta_{3}&=\{(u,s,\gamma):\ u\in(s,{u^\star}) \ \text{and} \ s<\gamma\},
\end{aligned}\label{eq:parameterregions}
\end{equation}
where if $\gamma=0$, $u^\star$ is to be understood as~$\infty$.

\smallskip

\begin{figure}[t]
	\centering\scalebox{.5}{
		\includegraphics{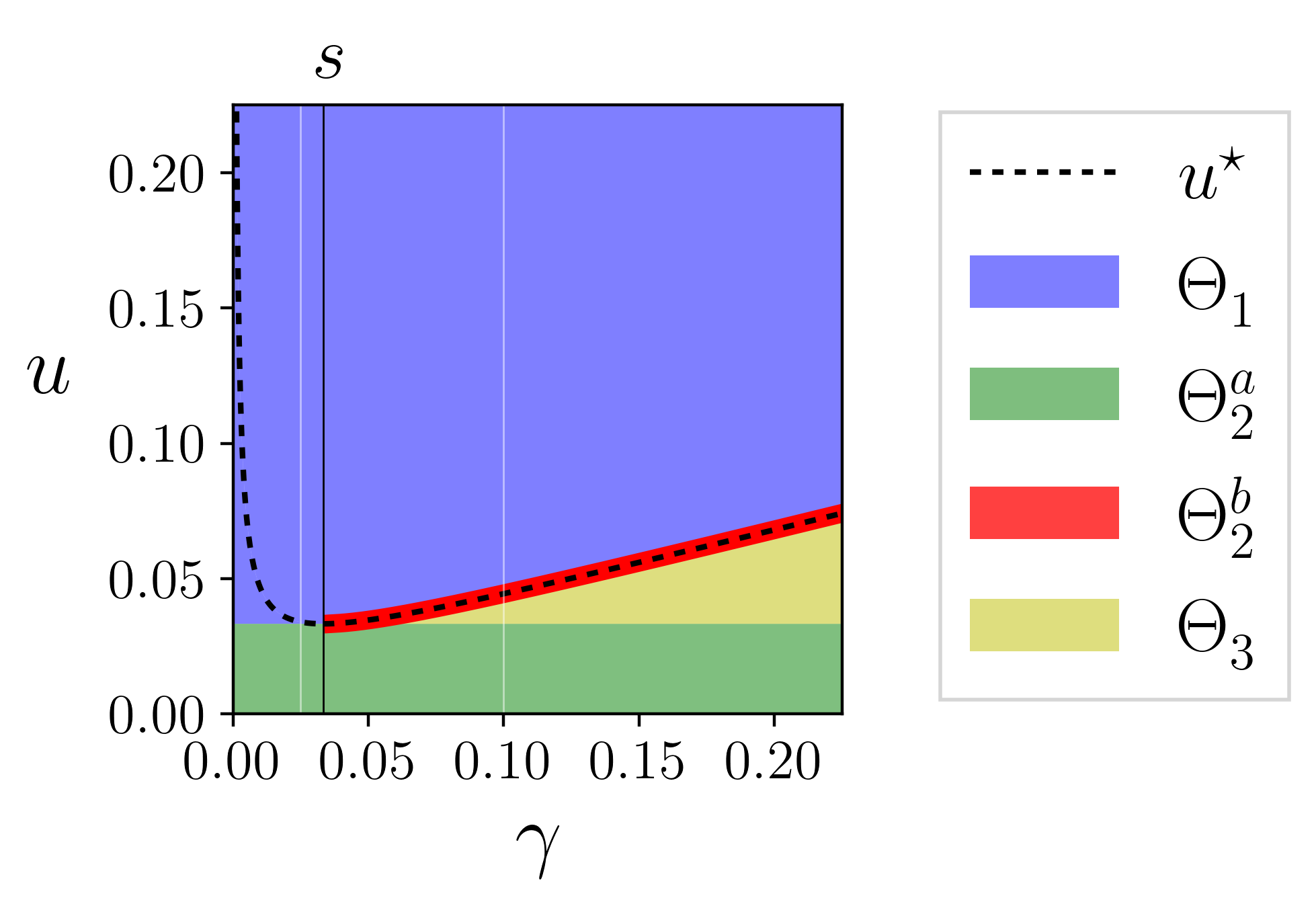}}
	\caption[Critical mutation rate]{Critical mutation rate~$u^\star$ as a function of $\gamma$ for $s=1/30$. Grey vertical lines indicate $\gamma=1/40$ and $\gamma=1/10$ to compare with Figure~\ref{fig:plots_l_gamma}.}
	\label{fig:u_star}
\end{figure}

 \begin{figure}[b]
	\centering\scalebox{.46}{
		\includegraphics{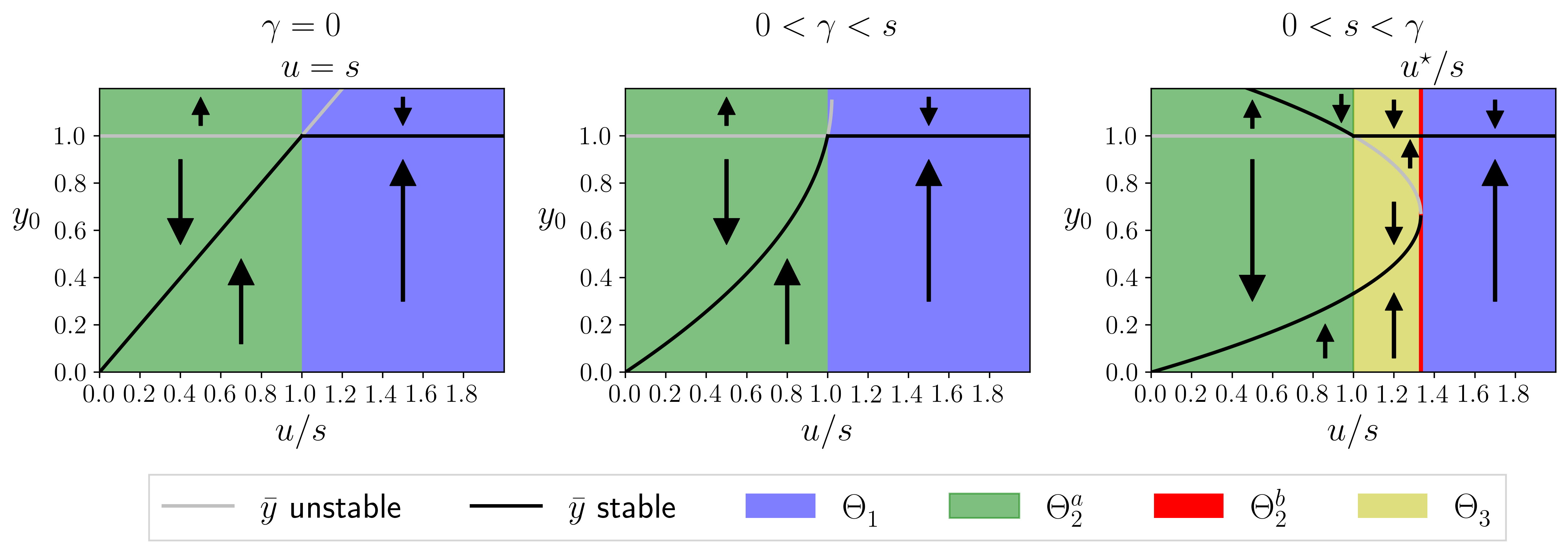}}
	\caption{The equilibria~$\bar{y}$ of~\eqref{eq:dlimitdiffeq} as functions of~$u/s$ for~$\nu_0=0$ and~$s=1/30$. The left, middle, and right panels correspond to~$\gamma=0$,~$\gamma=1/40$, and~$\gamma=1/10$, respectively. Arrows indicate the directions of attraction.}
	\label{fig:plots_l_gamma}
\end{figure}

Note that in $\Theta_1^{}$, only~$1$ is stable in $[0,1]$. In $\Theta_2^a$, $\ymin=\bar{y}_-$ is stable, and~$\ymax=1$ is unstable in $[0,1]$. In $\Theta_2^b$, 
$\ymin=\bar{y}_-=\bar{y}_+$ is attracting (only) from the left and~$\ymax=1$ is stable. In $\Theta_3^{}$, $\ymin=\bar{y}_-$ and~$\ymax=1$ are stable, and $\bar{y}_+\in(\ymin,\ymax)$ is unstable.
\smallskip

The monotonicity of~$y(\,\cdot\,;y_0)$ implies that $y_{\infty}^{}(y_0)\defeq\lim_{t\to\infty}y(t;y_0)$ exists and is always an equilibrium; this straightforwardly leads to the following corollary, which we state without proof.
\begin{coro}[Convergence]\label{coro:convergencetoequilibrium}
	Assume~$\gamma>0$ and $\nu_0=0$. In $\Theta_1^{}$, $y_{\infty}^{}(y_0)=1$ for all $y_0\in[0,1]$. In $\Theta_2^a$, $y_{\infty}^{}(1)=1$ and for 
	$y_0\in [0,1)$, $y_\infty^{}(y_0)=\bar{y}_-$.
	In $\Theta_2^b\cup\Theta_3^{}$, $$y_{\infty}^{}(y_0)=\begin{cases}
	\bar{y}_-, &\text{if }y_0\in[0,\bar{y}_+),\\
	\bar{y}_+, &\text{if }y_0=\bar{y}_+,\\
	1, &\text{if }y_0\in (\bar{y}_+,1].
	\end{cases}$$
\end{coro}

\smallskip

Let us recapitulate from \citep[Sect.~3]{Baake1997} the biological implications of Corollary~\ref{coro:convergencetoequilibrium}. In $\Theta_1^{}$, the fit type goes extinct regardless of its initial frequency. In $\Theta_2^a$, the fit type persists if its initial frequency is positive. In $\Theta_2^b\cup\Theta_3^{}$, it persists if its initial frequency is not below $1-\bar{y}_+$; otherwise it goes extinct. In particular, a beneficial mutant arising in small frequency in a population that is otherwise unfit dies out.
\smallskip

Recall that one speaks of a \emph{bifurcation} whenever  the variation of the parameter(s) of a differential equation leads to a qualitative change in the long-term behaviour of its solutions; the parameter values at which such a phenomenon occurs are called \emph{bifurcation values} -- see~\citep[Ch.~3]{guckenheimer2013nonlinear} for a general account of bifurcation theory for equilibria of ODEs. 
We close this section describing the bifurcations of~\eqref{eq:dlimitdiffeq} with~$u$ as the bifurcation parameter; see also \citep[Sect.~3]{Baake1997} and Fig. \ref{fig:plots_l_gamma} for an illustration. For~$0<\gamma<s$ (resp. $0<s<\gamma$), there is an \emph{exchange of stability} of the equilibria~$1$ and~$\bar{y}_-$ (resp. of $1$ and~$\bar{y}_+$) at~$u=s$; this is also known as a \emph{transcritical bifurcation}. At $u=s$, $1$ switches from unstable to stable; whereas $\bar{y}_-$ (resp. $\bar{y}_+$) switches from stable to unstable. If $\gamma<s$, this is an instance of the so-called error threshold \cite{Eigen1971}. For~$s<\gamma$, there is an additional \emph{saddle-node bifurcation} at~$u={u^\star}$, where the equilibria~$\bar{y}_-$ and~$\bar{y}_+$ (one stable, one unstable) collide and both vanish. If~$s={u^\star}$, we see a \emph{pitchfork bifurcation}, where the unstable equilibrium $1$ passes through the collision point of~$\bar{y}_-$ and~$\bar{y}_+$ and becomes stable (this corresponds to the simultaneous occurrence of the saddle node and the transcritical bifurcation). It is one of our main goals in the subsequent analysis to interpret the equilibria of the mutation-selection equation and their bifurcations in terms of an appropriate ancestral structure.

\begin{remark}
Recall from Remark~\ref{rem:cooperativebranching} the setting in the example of~\cite{MSS2018}. In this case, $u^\star=\gamma/4$ so that $\Theta_2^a$ is empty. In particular, if $\gamma<4$,  there is a unique, stable equilibrium. If $\gamma>4$,  there are three equilibria, two are stable and one is unstable.
\end{remark}

\subsection{Moran model with mutation, selection, and pairwise interaction and its ASG}\label{sec:mainresults:subsec:mm}
Before establishing ancestral structures for the mutation-selection equation, we connect the ODE to the Moran model, which has such a structure naturally embedded. We briefly recapitulate it in our setting.

\smallskip

The two-type Moran model with mutation, selection, and pairwise interaction describes the evolution of a population of $N\in \N$ haploid individuals in continuous time. It shares the types (0 and 1) and the set of parameters with the mutation--selection equation, i.e. $u>0$, $s,\gamma\geq 0$, and $\nu_0,\nu_1\in[0,1]$ with $\nu_0+\nu_1=1$. When an individual reproduces, its single offspring inherits the parent's type and replaces a uniformly chosen individual so that the population size remains constant. All individuals reproduce independently at the neutral rate~$1$; but individuals of type $0$ independently reproduce at an additional rate $s+\gamma(N-k)/N$ with $k$ the current number of type-$1$ individuals, where $(N-k)/N$ reflects the probability that a uniformly chosen partner individual is fit. In particular, the fit type can reproduce by interacting with another fit individual. Each individual mutates at rate~$u$; its type after the event is~$i$ with probability~$\nu_i,\ i\in \{0,1\}$.

\smallskip

Let~$Y_t^{(N)}$ be the (random) number of type-$1$ individuals at time~$t$ in a population of size~$N$. The process~${Y_{}^{(N)}\defeq (Y_t^{(N)})_{t\geq 0}}$ is a continuous-time birth-death process with transition rates
\[ q_{Y^{(N)}}^{}\left(k,k+1\right)=k\,\frac{N-k}{N}+(N-k)u \nu_1,\qquad 
q_{Y^{(N)}}^{}\left(k,k-1\right)=k\,\frac{N-k}{N}\left(1+s +\gamma \, \frac{N-k}{N}\right)+ku \nu_0,\] 
where~$k\in [N]_0\defeq [N]\cup\{0\}$ with $[N]\defeq\{1,\ldots,N\}$. 

\begin{remark}\label{sec:mainresults:subsec:mm:rem:parameterregimecooperativebranching}
Recall from the discussion following~\eqref{eq:dlimitdiffeq} that the Moran model with pairwise interaction can be translated into the cooperative branching process on a complete graph~\citep[Chs.~I.1.2.3 and~I.2.1]{Mach2017}, see also \citep{MSS2018}. A deleterious mutation corresponds to a death event and an interactive reproduction to a cooperative branching event. Other variants of such dynamics may be found in~\citep{noble1992,neuhauser94,sturm2015particle}.
\end{remark}
\begin{figure}[t]
	
	\centering
	\scalebox{0.7}{\begin{tikzpicture}
		\draw[line width=.5mm ] (14.5,4) -- (0,4);
		\draw[line width=.5mm ] (12.9,1) -- (1,1);
		\draw[line width=.5mm ] (8.5,2) -- (3.6,2);
		\draw[line width=.5mm ] (9.4,1) -- (1,1);
		\draw[line width=.5mm ] (8.5,3) -- (0,3);
		\draw[line width=.5mm ] (2.9,0) -- (0,0);
		
		\draw[line width=.5mm ] (12.9,1) -- (12.9,3.75);
		\draw[line width=.5mm ] (2.9,3.1) -- (2.9,3.75);
		\draw[line width=.5mm ] (2.9,2.9) -- (2.9,1.1);
		\draw[line width=.5mm ] (2.9,.9) -- (2.9,0);
		\draw[line width=.5mm ] (1,1.2) -- (1,3);
		\draw[line width=.5mm ] (6.2,4) -- (6.2,3.1);
		\draw[line width=.5mm ] (6.2,2.9) -- (6.2,2.1);
		\draw[line width=.5mm ] (6.2,1.9) -- (6.2,1.25);
		\draw[line width=.5mm ] (3.6,1) -- (3.6,1.8);
		
		\node[opacity=0.4] at (11,3) {\scalebox{2}{$\times$}} ;
		\node[opacity=0.4] at (13.3,1) {\scalebox{2}{$\times$}} ;
		\node[opacity=0.4] at (12,0) {\scalebox{2}{$\times$}} ;
		\node at (7.7,4) {\scalebox{2}{$\times$}} ;
		\node[opacity=0.4] at (2,2) {\scalebox{2}{$\times$}} ;
		\node[opacity=0.4] at (5,0) {\scalebox{2}{$\times$}} ;    
		\draw[dashed] (0,-0.5) --(0,4.5);
		\draw[dashed] (14.5,-0.5) --(14.5,4.5);    
		\node [right] at (0,-0.5) {$0$};
		\node [right] at (0,4.5) {$t$};
		\node [right] at (14.5,4.5) {$0$};
		\node [right] at (14.5,-0.5) {$t$};
		\draw[-{angle 60[scale=5]}] (5.25,-0.6) -- (9.25,-0.6) node[text=black, pos=.5, yshift=6pt]{};
		\draw[-{angle 60[scale=5]}] (9.25,4.5) -- (5.25,4.5) node[text=black, pos=.5, yshift=6pt]{$r$};
		
		\draw[opacity=0.4] (0,0) -- (14.5,0);
		\draw[opacity=0.4] (0,1) -- (14.5,1);
		\draw[opacity=0.4] (14.5,2) -- (0,2);
		\draw[opacity=0.4] (14.5,3) -- (0,3);
		\draw[opacity=0.4] (0,4) -- (14.5,4);
		\draw[-{triangle 45[scale=5]},semithick,opacity=0.4] (10,4) -- (10,3);
		\draw[-{triangle 45[scale=5]},semithick,opacity=0.4] (11.5,1) -- (11.5,0);
		\draw[-{triangle 45[scale=5]},thick,opacity=1] (3.6,1) -- (3.6,2);
		\draw[-{triangle 45[scale=5]},semithick,opacity=1] (1,3) -- (1,1);
		\draw[-{open triangle 45[scale=5]},thick,opacity=1] (12.9,1) -- (12.9,4);
		\draw[-{open triangle 45[scale=5]},thick,opacity=1] (2.9,3.1) -- (2.9,4);
		\draw[-{open triangle 45[scale=5]},thick,opacity=1] (6.2,1.5) -- (6.2,1);
		\draw[semithick,opacity=0.4] (7,2) -- (7,1.1);
		\draw[-{open triangle 45[scale=5]},semithick,opacity=0.4] (7,.9) -- (7,0);
		\coordinate (S2)  at (2.9,3);
		\coordinate (I12)  at (2.9,3.1);
		\coordinate (I22)  at (2.9,2.9);
		
		\coordinate (S4)  at (2.9,1);
		\coordinate (I14)  at (2.9,1.1);
		\coordinate (I24)  at (2.9,0.9);
		
		\coordinate (S1)  at (6.2,3);
		\coordinate (I11)  at (6.2,3.1);
		\coordinate (I21)  at (6.2,2.9);
		
		\coordinate (S3)  at (6.2,2);
		\coordinate (I13)  at (6.2,2.1);
		\coordinate (I23)  at (6.2,1.9);

		\draw[-{Stealth[length=2mm,width=2mm,open]},thick,opacity=1, opacity=.4] (12.4,2) -- (12.4,3.1);
		\draw[opacity=0.4] (12.4,2) circle (.6mm)  [fill=black!100];
		\fill[white, opacity=.4, draw=black] (12.3,2.9) rectangle (12.5,3.1); 
		\filldraw[semithick,opacity=0.4,white, draw=black] (12.4,1.1) -- (12.5,1) -- (12.4,0.9) -- (12.3,1) -- (12.4,1.1);
		
		\draw[semithick, opacity=0.4] (12.4,1) .. controls (12.4,1.5) and (12.55,1.5) .. (12.55,2);
		\draw[semithick, opacity=0.4] (12.55,2) .. controls (12.55,2.5) and (12.4,2.5) .. (12.4,3);

		\draw (8.5,3) circle (.6mm)  [fill=black!100];
		\fill[white, draw=black] (8.4,3.9) rectangle (8.6,4.1); 
		\draw[-{Stealth[length=2mm,width=2mm,open]},opacity=1,line width=.5mm] (8.5,3) -- (8.5,4.1);
		
		\draw[line width=.5mm] (8.5,2) .. controls (8.5,2.5) and (8.65,2.5) .. (8.65,3);
		\draw[line width=.5mm] (8.65,3) .. controls (8.65,3.5) and (8.5,3.5) .. (8.5,4);

		\filldraw[white, draw=black] (8.5,2.1) -- (8.6,2) -- (8.5,1.9) -- (8.4,2) -- (8.5,2.1);
		
		\draw[line width=.5mm, opacity=0.4] (7,0.875) arc(-100:100:.125) ;
		\draw[line width=.5mm ] (2.9,2.875) arc(-100:100:.125) ;
		\draw[line width=.5mm ] (2.9,0.875) arc(-100:100:.125) ;
		\draw[line width=.5mm ] (6.2,2.875) arc(-100:100:.125) ;
		\draw[line width=.5mm ] (6.2,1.875) arc(-100:100:.125) ;
		
		\draw (2.2,0) circle (1.5mm)  [fill=white!100];    
		\draw (3.2,4) circle (1.5mm)  [fill=white!100];
		\draw (9.4,1)[opacity=1] circle (1.5mm)  [fill=white!100];
		\draw (10.4,2)[opacity=0.4] circle (1.5mm)  [fill=white!100];
		\draw (1.5,-1.1) circle (1.5mm)  [fill=white!100] node at (1.5,-1.1) [right] {\ Mutation to type~$0$};
		\node at (1.5 ,-1.6) {\scalebox{2}{$\times$}} node at (1.5,-1.6)[right] {\ Mutation to type~$1$};    
		\draw[-{open triangle 45[scale=2.5]},semithick] (5.5,-1.1) -- (6,-1.1) node [right] {Selective arrow};
		\draw[-{triangle 45[scale=2.5]},semithick] (5.5,-1.6) -- (6,-1.6) node [right] {Neutral arrow};

		\fill[white, draw=black] (9.8,-1.2) rectangle (9.6,-1); 
		\draw[-{Stealth[length=2mm,width=2mm,open]},opacity=1,line width=.5mm] (9.3,-1.1) -- (9.8,-1.1)  node [right] {Interactive arrow};

		\fill[white, draw=black] (9.8,-1.7) rectangle (9.6,-1.5); 
		\draw[-{Stealth[length=2mm,width=2mm,open]},opacity=1,line width=.5mm] (9.3,-1.6) -- (9.8,-1.6)  node [right] {Checking arrow};
		\filldraw[fill=white!100, draw=black] (9.3,-1.5) -- (9.4,-1.6) -- (9.3,-1.7) -- (9.2,-1.6) -- (9.3,-1.5);
		\draw[opacity=1] (9.3,-1.1) circle (.6mm)  [fill=black!100];
		\draw (1,-0.8) -- (1,-1.9) -- (13,-1.9) -- (13,-0.8) -- (1,-0.8);
		\end{tikzpicture}    }
	\caption[Graphical representation of the Moran interacting particle system and its ASG]{A realisation of the Moran interacting particle system (all lines) for a population of size~$N=5$ and the ASG (bold lines) for a sample of size~$1$. Time runs forward in the Moran model ($\rightarrow$) and backward in the ASG ($\leftarrow$). An arrowhead inscribed into a square marks the joint tip of an interactive and a checking arrow.}
	\label{sec:mainresults:sec:mm:fig:graphrepASG}
\end{figure}
The Moran model with pairwise interaction has a well-known graphical representation as an interacting particle system, see Fig.~\ref{sec:mainresults:sec:mm:fig:graphrepASG}. Here, individuals are represented by pieces of horizontal lines. Time runs from left to right in the figure. We first describe the \emph{untyped} version, where no types have been assigned. Potential reproduction events are depicted by arrows between the lines with the potential parent at the tail of the arrow. If the arrow is \emph{used} to produce offspring, this offspring replaces the individual at the tip. We decompose reproduction events into neutral, selective, and interactive ones. Neutral arrows appear at rate~$1/N$ per ordered pair of lines; selective arrows appear at rate~$s/N$ per ordered pair. Interactive arrows occur at rate~$\gamma/N$ per ordered pair of lines and  are always accompanied by a checking arrow. This arrow shares the tip with the corresponding interactive arrow; but its tail is connected to a uniformly chosen line. That is, these arrow pairs occur at rate~$\gamma/N^2$ per triple of lines. All kinds of arrows (including the interactive/checking pairs) are laid down via Poisson point processes independently of each other. 
The rules for their use are as follows. All individuals use the neutral arrows. In addition, fit individuals use selective arrows. Interactive arrows are used by fit individuals if there is a fit individual at the tail of the associated checking arrow. Note that whenever an individual uses a neutral or selective arrow, it becomes the parent of the individual at the tip. This naturally introduces the concept of ancestry into the graphical representation. 

\smallskip

It will be helpful to give names to the lines that are involved in reproduction events. The line to the right of the tip of a selective or interactive arrow  carries a descendant and will be referred to as the \emph{descendant line}, the line left of the tip is called the~\emph{continuing line}, and the line at the tail of this arrow is called the \emph{incoming line}. Moreover, the line at the tail of a checking arrow is called the \emph{checking line}. Figure~\ref{fig:peckingordersel} and~\ref{fig:peckingorderint} illustrate the propagation of types and ancestry; together with the names of the lines.

\begin{figure}[b]
	\begin{center}
			\begin{minipage}{0.23 \textwidth}
			\centering
			\scalebox{1}{\begin{tikzpicture}
					\draw[black,line width=.5mm] (0,0)-- (2,0);
					\node at (1,0) {\scalebox{2}{$\times$}} ;
					\node[left] at (0,0) {$\star$};
					\node[right] at (2,0) {$1$};
			\end{tikzpicture}}
		\end{minipage}\hfill
		\begin{minipage}{0.23 \textwidth}
			\centering
			\scalebox{1}{\begin{tikzpicture}
					\draw[black,line width=.5mm] (0,0)-- (2,0);
					\draw (1,0) circle (1.5mm)  [fill=white!100]; 
					\node[left] at (0,0) {$\star$};
					\node[right] at (2,0) {$0$};
			\end{tikzpicture}}
		\end{minipage}\hfill
		\begin{minipage}{0.23 \textwidth}
			\centering
			\scalebox{1}{
				\begin{tikzpicture}
					\draw[line width=0.5mm, dotted] (0,0.7) -- (1,0.7);
					\draw[line width=0.5mm] (2,0.7) -- (1,0.7);
					\draw[line width=0.5mm] (0,0) -- (1,0);
					\draw[line width=0.5mm] (1,0) -- (1,0.45);
					\draw[-{open triangle 45[scale=5]},thick,color=black] (1,0) -- (1,0.7) node[text=black, pos=.6, xshift=7pt]{};
					\node[left] at (0,.7) {$\star$};
					\node[left] at (0,0) {$0$};
					\node[right] at (2,0.7) {$0$};
					\node[above] at (1.6,0.7) {D};
					\node[above] at (0.4,0.7) {C};
					\node[above] at (0.4,0) {I};
			\end{tikzpicture}}
		\end{minipage}\hfill
		\begin{minipage}{0.23 \textwidth}
			\centering
			\scalebox{1}{
				\begin{tikzpicture}
					\draw[line width=0.5mm] (0,0.7) -- (1,0.7);
					\draw[line width=0.5mm] (2,0.7) -- (1,0.7);
					\draw[line width=0.5mm,dotted] (0,0) -- (1,0);
					\draw[line width=0.5mm,dotted] (1,0) -- (1,0.45);
					\draw[-{open triangle 45[scale=5]},thick,color=black,dotted] (1,.45) -- (1,0.7) node[text=black, pos=.6, xshift=7pt]{};
					\node[left] at (0,.7) {$0/1$};
					\node[left] at (0,0) {$1$};
					\node[right] at (2,0.7) {$0/1$};
					\node[above] at (1.6,0.7) {D};
					\node[above] at (0.4,0.7) {C};
					\node[above] at (0.4,0) {I};
			\end{tikzpicture}}
		\end{minipage}\hfill
		
	\end{center}
	\caption[Pecking order (lookdown)]{Propagation of types across mutations and binary branchings. In the branching event, the descendant line (D) splits into the continuing line (C) and the incoming line (I). The solid line is parental. $\star$ stands for an arbitrary type, while $0/1$ means that the scheme applies with C and D both 0, or both 1.}
	\label{fig:peckingordersel}
\end{figure}

\smallskip

Mutation events are depicted by circles and crosses on the lines. A circle~(cross) indicates a mutation to type~$0$~(type~$1$), which means that the type on the line is~$0$~(is~$1$) after the mutation, see Fig.~\ref{fig:peckingordersel}. This occurs at rate~$u\nu_0$~(at rate~$u\nu_1$) on every line, again by way of independent Poisson point processes.

\smallskip

Given a realisation of the untyped particle system and an initial type configuration (that is, a type assigned to each line at~$t=0$), one determines the types on the lines for all~$t>0$ via the propagation rules explained above, thus \emph{typing} the particle system. The distribution of the initial types and the law of the graphical elements (arrows, circles, and crosses) are independent of each other.

\smallskip

The graphical representation gives rise to the ASG via the following construction. Consider a realisation of the untyped interacting particle system in the  interval~$[0,t]$ for some time~$t>0$, to which we refer as the present. We now construct a process that starts at present and runs \emph{backward} in time. Backward time is indicated by the variable~$r$, where $r=0$ ($r=t$) corresponds to forward time~$t$  (forward time~$0$). Now, pick an untyped sample  at present and trace back the lines of individuals whose type may have an influence on the type of the sampled individuals where, at this stage, we only take into account the information contained in the reproduction events, and ignore the additional information due to mutation. The ASG consists of these lines in~$[0,t]$. See Fig.~\ref{sec:mainresults:sec:mm:fig:graphrepASG} for the  ASG embedded into the interacting particle system.

\begin{figure}[t]
	\begin{center}
		\begin{minipage}{0.19 \textwidth}
			\centering
			\scalebox{1}{
				\begin{tikzpicture}			
					\draw[line width=0.5mm,dotted] (0,1.4) -- (1,1.4);
					\draw[line width=0.5mm] (1,1.4) -- (2,1.4);
					\draw[line width=0.5mm] (0,.7) -- (1,.7);
					\draw[line width=0.5mm,dotted] (0,0) -- (1,0);
					
					\draw (1,.7) circle (.6mm)  [fill=black!100];
					\fill[white, draw=black] (0.9,1.5) rectangle (1.1,1.3); 
					\draw[line width=.5mm,dotted] (1,0) .. controls (1,.2) and (1.15,.5) .. (1.15,.7);
					\draw[line width=.5mm,dotted] (1.15,.7) .. controls (1.15,.9) and (1,1.05) .. (1,1.35);
					\draw[-{Stealth[length=2mm,width=2mm,open]},opacity=1,line width=.5mm] (1,.7) -- (1,1.5);
					\filldraw[white, draw=black] (1,0.1) -- (1.1,0) -- (1,-0.1) -- (.9,0) -- (1,0.1);

					\node[left] at (0,.7) {$0$};
					\node[left] at (0,1.4) {$\star$};
					\node[left] at (0,0) {$0$};
					\node[right] at (2,1.4) {$0$};
					\node[above] at (1.6,1.4) {D};
					\node[above] at (0.4,0.7) {I};
					\node[above] at (0.4,1.4) {C};
					\node[above] at (0.4,0) {J};
			\end{tikzpicture}}
		\end{minipage}\hspace{1.2cm}\begin{minipage}{0.19 \textwidth}
			\centering
			\scalebox{1}{
				\begin{tikzpicture}			
					\draw[line width=0.5mm] (0,1.4) -- (1,1.4);
					\draw[line width=0.5mm] (1,1.4) -- (2,1.4);
					\draw[line width=0.5mm,dotted] (0,.7) -- (1,.7);
					\draw[line width=0.5mm,dotted] (0,0) -- (1,0);
					
					\draw (1,.7) circle (.6mm)  [fill=black!100];
					\fill[white, draw=black] (0.9,1.5) rectangle (1.1,1.3); 
					\draw[line width=.5mm,dotted] (1,0) .. controls (1,.2) and (1.15,.5) .. (1.15,.7);
					\draw[line width=.5mm,dotted] (1.15,.7) .. controls (1.15,.9) and (1,1.05) .. (1,1.35);
					\draw[-{Stealth[length=2mm,width=2mm,open]},opacity=1,line width=.5mm,dotted] (1,.7) -- (1,1.5);
					\filldraw[white, draw=black] (1,0.1) -- (1.1,0) -- (1,-0.1) -- (.9,0) -- (1,0.1);

					\node[left] at (0,.7) {$0$};
					\node[left] at (0,1.4) {$0/1$};
					\node[left] at (0,0) {$1$};
					\node[right] at (2,1.4) {$0/1$};
					\node[above] at (1.6,1.4) {D};
					\node[above] at (0.4,0.7) {I};
					\node[above] at (0.4,1.4) {C};
					\node[above] at (0.4,0) {J};
			\end{tikzpicture}}
		\end{minipage}\hspace{1.2cm}\begin{minipage}{0.19 \textwidth}
			\centering
			\scalebox{1}{
				\begin{tikzpicture}			
					\draw[line width=0.5mm] (0,1.4) -- (1,1.4);
					\draw[line width=0.5mm] (1,1.4) -- (2,1.4);
					\draw[line width=0.5mm,dotted] (0,.7) -- (1,.7);
					\draw[line width=0.5mm,dotted] (0,0) -- (1,0);
					
					\draw (1,.7) circle (.6mm)  [fill=black!100];
					\fill[white, draw=black] (0.9,1.5) rectangle (1.1,1.3); 
					\draw[line width=.5mm,dotted] (1,0) .. controls (1,.2) and (1.15,.5) .. (1.15,.7);
					\draw[line width=.5mm,dotted] (1.15,.7) .. controls (1.15,.9) and (1,1.05) .. (1,1.35);
					\draw[-{Stealth[length=2mm,width=2mm,open]},opacity=1,line width=.5mm,dotted] (1,.7) -- (1,1.5);
					\filldraw[white, draw=black] (1,0.1) -- (1.1,0) -- (1,-0.1) -- (.9,0) -- (1,0.1);

					\node[left] at (0,.7) {$1$};
					\node[left] at (0,1.4) {$0/1$};
					\node[left] at (0,0) {$\star$};
					\node[right] at (2,1.4) {$0/1$};
					\node[above] at (1.6,1.4) {D};
					\node[above] at (0.4,0.7) {I};
					\node[above] at (0.4,1.4) {C};
					\node[above] at (0.4,0) {J};
			\end{tikzpicture}}
		\end{minipage}
	\end{center}
	\caption[Type propagation in ternary branchings (lookdown)]{Ternary branching of the descendant line (D) into the continuing line (C), the checking line (J), and the incoming line (I) along with the associated type propagation rule. The solid line is parental. $\star$ stands for an arbitrary type, while $0/1$ means that the scheme applies with C and D both 0, or both 1.}
	\label{fig:peckingorderint}
\end{figure} 
\smallskip

The true ancestry of the initial sample is obtained after assigning types to all lines in the ASG at forward time~$0$ (i.e. $r=t$), without replacement from a population consisting of $Y^{(N)}_0$ unfit and $N-Y^{(N)}_0$ fit individuals. Then propagate the types and resolve the ancestry at branching events forward in time up to time~$t$ ($r=0$) according to the  propagation rules of the Moran model. This way, the types of the sampled individuals are recovered together with their ancestry. For a detailed construction of the ASG in the Moran model we refer to Section~\ref{sec:ASGMoran}.

\subsection{Large population limit of the Moran model and the  ASG}\label{sec:mainresults:subsec:detModelvsMoranModel}
Let us now relate the Moran model with pairwise interaction to the mutation--selection equation with pairwise interaction. To this end, consider a sequence of Moran models, indexed by their population size $N$, sharing the same parameters $s,\gamma,u,\nu_0,\nu_1$, and let $N$ tend to infinity without rescaling time or parameters. The existence of such a limit and its relation with the mutation--selection equation is given in the following proposition, which we will prove in Section~\ref{sec:detmodelandlaw}.
\begin{proposition}[Dynamical law of large numbers]\label{sec:mainresults:prop:lln}
	Suppose~$\lim_{N\rightarrow\infty}Y_0^{(N)}/N=y_0\in[0,1]$. Then for all~$\varepsilon>0$ and $t\geq 0$, we have\begin{equation*}
	\lim\limits_{N\rightarrow\infty} P\Big (\sup\limits_{\xi\leq t}\Big | \frac{Y_{\xi}^{(N)}}{N}-y(\xi;y_0)\Big | >\varepsilon\Big ) =0,
	\end{equation*}
	where~$y(\,\cdot\,;y_0)$ is the solution of the IVP~\eqref{eq:dlimitdiffeq}.
\end{proposition}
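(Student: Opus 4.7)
The plan is a standard dynamical law of large numbers for the density-dependent Markov chain $X^{(N)} := Y^{(N)}/N$. Writing $x = k/N$, the transition rates of $Y^{(N)}$ translate to jumps of $X^{(N)}$ of size $\pm 1/N$ at rates
$$\lambda_+^N(x) := x(1-x) + (1-x)u\nu_1, \qquad \lambda_-^N(x) := x(1-x)\bigl(1+s+\gamma(1-x)\bigr) + xu\nu_0.$$
The scaled drift $b_N(x) := \tfrac{1}{N}\bigl(\lambda_+^N(x) \cdot N - \lambda_-^N(x) \cdot N\bigr) = \lambda_+^N(x) - \lambda_-^N(x)$ equals, after cancellation of the symmetric neutral contribution $x(1-x)$, exactly $F(x)$ from \eqref{eq:dlimitdiffeq}. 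The infinitesimal variance per unit time satisfies $\tfrac{1}{N}\bigl(\lambda_+^N(x) + \lambda_-^N(x)\bigr) \leq C/N$ for some constant $C = C(s,\gamma,u)$ uniformly in $x \in [0,1]$.

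Next I invoke the Doob--Meyer decomposition to write
$$X^{(N)}_{\xi} = X^{(N)}_0 + \int_0^\xi F(X^{(N)}_r)\,\dd r + M^{(N)}_\xi,$$
where $M^{(N)}$ is a c\`adl\`ag martingale with predictable quadratic variation $\langle M^{(N)}\rangle_\xi \leq C\xi/N$. By Doob's $L^2$ maximal inequality,
$$\P\Big(\sup_{\xi\leq t}|M^{(N)}_\xi| > \ep\Big) \leq \frac{4\, \E[\langle M^{(N)}\rangle_t]}{\ep^2} \leq \frac{4Ct}{\ep^2 N} \xrightarrow[N\to\infty]{} 0.$$

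Now subtract the integral form $y(\xi;y_0) = y_0 + \int_0^\xi F(y(r;y_0))\,\dd r$ of \eqref{eq:dlimitdiffeq}, use that $F$ is polynomial, hence Lipschitz on the compact interval $[0,1]$ with some constant $L = L(s,\gamma,u)$, and take the supremum over $[0,\xi]$ on both sides. Since $X^{(N)}$ takes values in $[0,1]$ and so does $y(\cdot;y_0)$ by the invariance noted after Proposition~\ref{prop:equilibriumpoints}, I obtain
$$\sup_{\eta\leq \xi}\bigl|X^{(N)}_\eta - y(\eta;y_0)\bigr| \leq \bigl|X^{(N)}_0 - y_0\bigr| + L\int_0^\xi \sup_{\eta\leq r}\bigl|X^{(N)}_\eta - y(\eta;y_0)\bigr|\,\dd r + \sup_{\eta\leq \xi}|M^{(N)}_\eta|.$$
Gronwall's lemma then gives
$$\sup_{\xi\leq t}\bigl|X^{(N)}_\xi - y(\xi;y_0)\bigr| \leq \Big(\bigl|X^{(N)}_0 - y_0\bigr| + \sup_{\xi\leq t}|M^{(N)}_\xi|\Big)e^{Lt},$$
and the right-hand side tends to $0$ in probability by the hypothesis $Y^{(N)}_0/N \to y_0$ and the martingale bound above, yielding the claim.

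The plan is essentially routine; the only points requiring mild care are the correct bookkeeping of the neutral-reproduction cancellation in the drift computation and the verification that $X^{(N)}$ never leaves $[0,1]$ (immediate, since neither type can have negative count), so that the Lipschitz bound on $F$ applies along the trajectory. No step is expected to present a serious obstacle; alternatively, one could invoke the general functional law of large numbers for density-dependent Markov chains \cite[Thm.~11.2.1]{etheridge2011some}-style references.
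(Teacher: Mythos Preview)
Your argument is correct. The paper, however, does not carry out the martingale-plus-Gronwall argument by hand: it simply observes that the rates can be written in the form $q_{Y^{(N)}}(k,k+\ell)=Nq(k/N,\ell)$ with continuous $q$, so that $(Y^{(N)})_{N\geq 1}$ is a density-dependent family, and then invokes Kurtz's classical law of large numbers \cite[Thm.~3.1]{kurtz1970} after checking the boundedness conditions $\sup_{y\in[0,1]}\sum_\ell |\ell| q(y,\ell)<\infty$ and $\lim_{d\to\infty}\sup_{y\in[0,1]}\sum_{|\ell|>d}|\ell|q(y,\ell)=0$, which are trivially satisfied here. Your approach essentially reproves the relevant special case of Kurtz's theorem from scratch; this is more self-contained and makes the $e^{Lt}$ dependence explicit, while the paper's route is shorter and closer to what you mention as an alternative in your final sentence. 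One minor wording issue: the jump rates of $X^{(N)}$ are $N\lambda_\pm^N(x)$, not $\lambda_\pm^N(x)$ as stated, though your subsequent drift and variance computations use the correct scaling.
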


The above limit is   natural  for large populations. After all, mutation is a molecular mechanism and is reasonably assumed as independent of population size. Likewise, reproduction rates may be assumed to be independent of population size as long as available resources (space, food, \textellipsis) scale linearly with~$N$. Mutation, selection, and interaction are said to be \emph{strong} in this case; in contrast to \emph{weak} parameters that scale inversely with population size and give rise to a diffusion limit of the Moran model (see \cite[Sec.~7.2]{durrett2008probability} for a review). 

\smallskip

The connection between the (finite-$N$) Moran model and its deterministic counterpart leads to a candidate for an ancestral structure for the mutation--selection equation with pairwise interaction. We start with the ancestral picture of the stochastic model and consider the limit~$N\to\infty$. The resulting process will still be stochastic. Let us first describe it in the spirit of the graphical representation of Section~\ref{sec:mainresults:subsec:mm} and only then formalise it.

\smallskip

The asymptotic ASG has tree structure. More precisely, it has only binary and ternary branchings, and mutations. This is because arrows that connect two lines in the finite-$N$ ASG occur at a rate of order~$\mathcal{O}(1/N)$ (per ordered pair of lines); see Sec.~\ref{sec:ASGMoran}. The lines in the asymptotic ASG are therefore also conditionally independent. So in the limit, the distribution of an ASG that starts from~$n$ individuals is distributed as~$n$ independent copies of an ASG started from a single individual. Thus, we can restrict our analysis to an ASG starting with a single line.
\smallskip

The asymptotic ASG has the following transitions (see Figs.~\ref{fig:peckingordersel}  and \ref{fig:peckingorderint}). Binary branchings occur in each line independently at rate~$s$, thus increasing the number of lines by one. Independently of the other lines and independently of binary branchings, each line has ternary branchings at rate~$\gamma$, which increase the number of lines by two. Each line mutates to type~$0$ at rate~$u\nu_0$ and to type~$1$ at rate~$u\nu_1$. Mutations occur independently on each line and independently of all other events. We will from now on refer to the ASG in this large population limit just as the ASG, unless stated otherwise.

\smallskip

As in the finite case, the untyped picture turns into a typed one by assigning a type to each line of the ASG at backward time~$r=t$ (the past), this time by sampling independently according to~$(1-y_0,y_0)$. We then propagate types and resolve the ancestry according to the usual rules up to time~$r=0$ (the present), see Figs.~\ref{fig:peckingordersel} and \ref{fig:peckingorderint}. In this way, one determines the distribution of the type of the sampled individual along with its ancestry.

\subsection{The embedded ASG, type propagation, and a sampling duality} \label{sec:mainresults:eASG}
One way to formally construct the  ASG is via real trees with marks (see e.g.~\citep[p. 70]{Hummel2019}). But for our purposes it is enough to encode the tree structure embedded in the ASG together with the mutation marks. We do this using an appropriate
class of trees with marks.

\smallskip

A directed rooted tree is a finite, directed, acyclic, connected graph with a special vertex called the root, where the edges are directed away from the root. Vertices are unlabelled\footnote{That is, we consider a tree as an equivalence class with respect to relabelling of the vertices.} and edges have no length. For a directed rooted tree $\tau$, we write $V(\tau)$ and $L(\tau)$ for the set of vertices and the set of leaves, respectively. Moreover, we write $\rho(\tau)$ for the root of $\tau$. The outdegree of a vertex~$v$ is denoted by~$\deg(v)$. For a directed edge $(v, w)$ we call $w$ the child of $v$, and $v$ the
parent of $w$. Similarly, if  a vertex $u$ is on the path from $\rho(\tau)$ to a vertex $w$ in $\tau$, we write $u\preceq_\tau w$; this induces a partial order on $V(\tau)$. We write $u\prec_\tau w$ if $u\preceq_\tau w$ but $u\neq w$; we then
call $u$ an ancestor of $w$, and $w$ a descendant of $u$. (It is unfortunate that these standard notions for trees are contrary to the genealogical interpretation. However, in what follows, we stick to the tree notation, unless specified otherwise.) An ordered rooted tree is a directed rooted tree in which an ordering is specified for the children of each internal vertex. Such an ordering is equivalent to an embedding of the tree in the plane, with the root at the bottom and the children of each vertex  ordered from left to right and placed above their parent. This allows us to speak of the left, middle, and right (the left and right) child of a vertex with outdegree $3$ (outdegree $2$).
\smallskip

Let $\Xi$ be the set of ordered rooted trees having: i) vertices with outdegree at most~$3$, and ii) vertices with outdegree~$1$ marked either with $\times$ or $\circ$. Since vertices are unlabelled, $\Xi$ is countable, and thus, a Polish space if equipped with the discrete topology.
\smallskip

For $\alpha\in \Xi$ and $v\in V(\alpha)$ with $\deg(v) = 3$ (with $\deg(v) = 2$), we denote by $\chil{v}$, $\chim{v}$, and $\chir{v}$ (by $\chil{v}$ and $\chir{v}$) the left, middle and right (the left and right) child of $v$. If $\deg(v) = 1$, its single child is  $\chim{v}$.  We symbolise by~$\rootsin$ the tree that is only an unmarked root.
\smallskip

Let us now explain how to associate to an ASG in $[0, t]$ a unique element $a_t\in\Xi$ (see Fig.~\ref{eASG}). First, we identify each line segment in the ASG enclosed between two consecutive events, i.e. between branching, mutation, or the terminal time $t$, with a unique vertex in $V(a_t)$, and vice versa. If the segment ends at time $t$, the vertex is a leaf (so has outdegree 0). If the segment ends in a beneficial (resp. deleterious) mutation, the vertex has outdegree 1, and we equip it with mark $\circ$ (resp. $\times$). If the segment ends in a binary branching,  the vertex has outdegree 2, and we make it the parent of the vertices corresponding to the continuing (left child) and incoming line (right child). Finally, if the segment ends in a ternary branching, then the vertex has outdegree 3, and we make it the parent of the vertices corresponding to the continuing (left child), checking (middle child) and incoming line (right child).  We call $a_t$ the \emph{embedded ASG (eASG)}.

\begin{figure}[t!]
	\begin{minipage}{.4\textwidth}
		\begin{center} \scalebox{0.65}{\begin{tikzpicture}
				\draw[line width=.5mm ] (8,4) -- (1,4);
				\draw[line width=.5mm ] (6.5,2) -- (1,2);
				\draw[line width=.5mm ] (2.9,1) -- (1,1);
				\draw[line width=.5mm ] (6.5,3) -- (1,3);
				\draw[line width=.5mm ] (2.9,3.1) -- (2.9,3.75);
				\draw[line width=.5mm ] (2.9,1.9) -- (2.9,1);
				\draw[line width=.5mm ] (2.9,2.1) -- (2.9,2.9);
				
				\draw[dashed,opacity=0.5]{(1,4) -- (1,0.5)};
				\node [below] at (1,0.5) {$t$};
				\draw[dashed, opacity=0.5]{(2,4) -- (2,0.5)};
				\node [below] at (2,0.5) {$t_4$};
				\draw[dashed, opacity=0.5]{(2.9,4) -- (2.9,0.5)};
				\node [below] at (2.9,0.5) {$t_3$};
				\draw[dashed, opacity=0.5]{(4.8,4) -- (4.8,0.5)};
				\node [below] at (4.8,0.5) {$t_2$};
				\draw[dashed, opacity=0.5]{(6.5,4) -- (6.5,0.5)};
				\node [below] at (6.5,0.5) {$t_1$};
				\draw[dashed, opacity=0.5]{(8,4) -- (8,0.5)};
				\node [below] at (8,0.5) {$0$};
				
				\node at (4.8,4) {\scalebox{2}{$\times$}} ;
				
				\draw[-{open triangle 45[scale=5]},thick,opacity=1] (2.9,3.1) -- (2.9,4);
				
				\coordinate (S2)  at (2.9,3);
				\coordinate (I12)  at (2.9,3.1);
				\coordinate (I22)  at (2.9,2.9);
				
				\coordinate (S4)  at (2.9,1);
				\coordinate (I14)  at (2.9,1.1);
				\coordinate (I24)  at (2.9,0.9);
				
				\coordinate (S1)  at (6.2,3);
				\coordinate (I11)  at (6.2,3.1);
				\coordinate (I21)  at (6.2,2.9);
				
				\coordinate (S3)  at (6.2,2);
				\coordinate (I13)  at (6.2,2.1);
				\coordinate (I23)  at (6.2,1.9);
				
				\draw (6.5,3) circle (.6mm)  [fill=black!100];
				\fill[white, draw=black] (6.4,3.9) rectangle (6.6,4.1); 
				\draw[-{Stealth[length=2mm,width=2mm,open]},opacity=1,line width=.5mm] (6.5,3) -- (6.5,4.1);
				
				\draw[line width=.5mm] (6.5,2) .. controls (6.5,2.5) and (6.65,2.5) .. (6.65,3);
				\draw[line width=.5mm] (6.65,3) .. controls (6.65,3.5) and (6.5,3.5) .. (6.5,4);
				
				\filldraw[white, draw=black] (6.5,2.1) -- (6.6,2) -- (6.5,1.9) -- (6.4,2) -- (6.5,2.1);
				
				\draw[line width=.5mm ] (2.9,2.875) arc(-100:100:.125) ;
				\draw[line width=.5mm ] (2.9,1.875) arc(-100:100:.125) ;
				
				\draw (2,2) circle (1.5mm)  [fill=white!100];    
				\end{tikzpicture}}
		\end{center}
	\end{minipage}\hspace{0.5cm} \begin{minipage}{0.25\textwidth}
		\begin{center}
			\scalebox{.65}{\begin{tikzpicture}


				\draw[dashed,opacity=0.5]{(1,4) -- (1,0.5)};
				\node [below] at (1,0.5) {$t$};
				\draw[dashed, opacity=0.5]{(2,4) -- (2,0.5)};
				\node [below] at (2,0.5) {$t_4$};
				\draw[dashed, opacity=0.5]{(2.9,4) -- (2.9,0.5)};
				\node [below] at (2.9,0.5) {$t_3$};
				\draw[dashed, opacity=0.5]{(4.8,4) -- (4.8,0.5)};
				\node [below] at (4.8,0.5) {$t_2$};
				\draw[dashed, opacity=0.5]{(6.5,4) -- (6.5,0.5)};
				\node [below] at (6.5,0.5) {$t_1$};
				\draw[dashed, opacity=0.5]{(8,4) -- (8,0.5)};
				\node [below] at (8,0.5) {$0$};
				
				\node at (4.8,4) {\scalebox{2}{$\times$}} ;
				

				\coordinate (S2)  at (2.9,3);
				\coordinate (I12)  at (2.9,3.1);
				\coordinate (I22)  at (2.9,2.9);
				
				\coordinate (S4)  at (2.9,1);
				\coordinate (I14)  at (2.9,1.1);
				\coordinate (I24)  at (2.9,0.9);
				
				\coordinate (S1)  at (6.2,3);
				\coordinate (I11)  at (6.2,3.1);
				\coordinate (I21)  at (6.2,2.9);
				
				\coordinate (S3)  at (6.2,2);
				\coordinate (I13)  at (6.2,2.1);
				\coordinate (I23)  at (6.2,1.9);
				
                 \draw[line width=.5mm] (6.32,4) -- (6.68,4);
                 \draw[line width=.5mm] (6.33,3) -- (6.66,4);
				\draw[line width=.5mm] (6.33,2) -- (6.66,4);
				
				\draw[line width=.5mm] (4.6,4) -- (4.8,4);
				
				\draw[line width=.5mm] (2.77,4) -- (3,4);
				\draw[line width=.5mm] (2.799,1) -- (3,4);
				
				\draw[line width=.5mm] (1.8,2) -- (2.1,2);
				
				\draw (2,2) circle (1.5mm)  [fill=white!100];   

				\begin{pgfonlayer}{background}
                   \highlight{5mm}{cyan}{(7.75,4) -- (6.9,4)}
                   \highlight{5mm}{cyan}{(6.1,4) -- (5.05,4)}
                   \highlight{5mm}{cyan}{(4.35,4) -- (3.25,4)}
                   \highlight{5mm}{cyan}{(2.55,4) -- (1.2,4)}
                   
                    \highlight{5mm}{cyan}{(6.1,2) -- (2.25,2)}
                    \highlight{5mm}{cyan}{(1.2,2) -- (1.55,2)}
                   
                   \highlight{5mm}{cyan}{(6.1,3) -- (1.2,3)}

                   \highlight{5mm}{cyan}{(2.55,1) -- (1.2,1)}
                   
                 \end{pgfonlayer}
				
				\end{tikzpicture}}
			
	\end{center}	\end{minipage}\hspace{0.7cm}\begin{minipage}{0.25\textwidth}
		\begin{center}
			\scalebox{.40}{\begin{tikzpicture}
				\draw[opacity=0] (.5,.5)--(2.5,7.5);
				\begin{scope}[every node/.style={regular polygon,regular polygon sides=7,draw}, minimum size=30pt]
				\node (h00) at (1.5,6.2) {};
				\node (h01) at (3.5,6.2) {};
				\node (h02) at (2.5,4.5) {};
				\node (h03) at (5.5,4.5) {};
				\node (h04) at (2.5,3) {};
				\node (h05) at (4,3) {};
				\node (h06) at (5.5,3) {};
				\node (h07) at (4,1.4) {};
				\end{scope}

				\begin{scope}[>={latex[black]}, every node/.style={fill=white,circle},
				every edge/.style={draw=black,very thick}]
				
				\path [-] (h02) edge (h00);
				\path [-] (h02) edge (h01);
				
				\path [-] (h04) edge (h02);
				
				\path [-] (h07) edge (h04);
				\path [-] (h07) edge (h05);
				\path [-] (h07) edge (h06);

				\path [-] (h06) edge (h03);

				\end{scope}

				\draw (5.5,3) circle (3 mm)  [fill=white!100];

				\node at (2.5,3) {\scalebox{4}{$\times$}} ;

				\end{tikzpicture}}
			
	\end{center}	\end{minipage}
	
	\caption[ASG vs embedded ASG]{In the classical visualisation of the ASG (left), horizontal line segments delimited by events translate into vertices in the embedded ASG (right). Vertices corresponding to segments ending in a beneficial (resp. deleterious) mutations are marked with a $\circ$ (resp. $\times$). The  middle picture shows an intermediate stage where the segments appear as sausages, which then shrink into vertices with edges between them that have no (meaningful) lenghts.}
	\label{eASG}
\end{figure}
\smallskip

The type propagation along the lines in the ASG, as illustrated in Figs.~\ref{fig:peckingordersel} and~\ref{fig:peckingorderint}, translates into the following notion of type propagation along the vertices of a tree in $\Xi$.

\begin{definition}[Type propagation]\label{typro}
A \emph{leaf-type configuration} of $\al\in\Xi$ is a vector ${c}\defeq ({c}_\ell)_{\ell\in L(\al)}\in\{0,1\}^{L(\al)}$. The \emph{vertex-type propagation} of ${c}$ in $\al$ is the vector $\cpr\defeq (\cpv{v})_{v\in V(\al)}\in\{0,1\}^{V(\al)}$ constructed from~${c}$ recursively\footnote{The notation $\cpv{v}$ hints at the construction from the leaves to the root of the tree.} as follows. 
\begin{enumerate}
    \item If ~$\deg(v)=0$ (that is, $v\in {L}(\al)$), set $\cpv{\ell}= {c}_\ell$.
    \item If ~$\deg(v)=1$ and $v$ is marked with $\times$ (resp. $\circ$), set $\cpv{v}= 1$ (resp. $= 0$).  
	\item If~$\deg(v)=2$, then $\cpv{v}= 1$ if and only if $\cpv{\chil{v}}=\cpv{\chir{v}}=1$.	
	\item If~$\deg(v)=3$, then $\cpv{v}= 1$ if and only if $\cpv{\chil{v}}=1$ and $\cpv{\chil{v}}+\cpv{\chir{v}}>0$.
\end{enumerate}
If $\cpv{v}= 1$ (resp. $=0$), we say that $v\in V(\al)$ has the unfit (resp. fit) type under $c$.
\end{definition}

Note that the mapping $c\mapsto\cpv{\varrho(\alpha)}$ is a Boolean function, which is conveniently encoded via the underlying tree. 

\smallskip

Let now~$\al\in\Xi$, $z\in[0,1]$, and for each~$\ell\in L(\al)$, let $C_\ell(z)$ be a Bernoulli random variable with parameter~$z$; so $C(z)\defeq (C_\ell(z))_{\ell\in {L}(\al)}$ is a random leaf-type configuration. Define~$H(\al, z)$ to be the probability that the root of $\al$ gets the unfit type under $C(z)$. 

\smallskip 

Our next task is to translate the transitions of the ASG into transitions of the embedded ASG. To this end, we introduce the following transformations on~$\Xi$, see  Fig.~\ref{fig:asgprocess}. For $\asg\in {\Xi}$ and $\ell\in L(\asg)$, define \begin{enumerate}
	\item[(i)] $\asg^{\nwedge}_{\ell}\in {\Xi}$, the tree that arises if we add two children $\chil{\ell}$ (left) and $\chir{\ell}$ (right) with no mark to $\ell$ in $\al$. In particular, $V(\asg_{\ell}^{\nwedge})=V(\asg) \cup\{\chil{\ell},\chir{\ell}\}$ and $L(\asg_{\ell}^{\nwedge})=(L(\asg)\setminus\{\ell\})\cup\{\chil{\ell},\chir{\ell}\}$.
	\item[(ii)] $\asg_{\ell}^{\pitch}\in {\Xi}$, the tree that arises if we add three children $\chil{\ell}$ (left), $\chimell$ (middle), and $\chir{\ell}$ (right) with no mark to~$\ell$ in~$\asg$. In particular, $V(\asg_{\ell}^{\pitch})=V(\asg) \cup\{\chil{\ell},\chim{\!\ell},\chir{\ell}\}$ and $L(\asg_{\ell}^{\pitch})=(L(\asg)\setminus\{\ell\})\cup\{\chil{\ell},\chimell,\chir{\ell}\}$.
	\item[(iii)] $\asg_{\ell}^{\times}\in {\Xi}$, the tree that arises if we add a child~$\chim{\ell}$ to $\ell$ in~$\asg$ and mark $\ell$ with $\times$. In particular, $V(\asg_{\ell}^{\times})=V(\asg) \cup\{\chim{\ell}\}$ and $L(\asg_{\ell}^{\times})=(L(\asg)\setminus\{\ell\})\cup\{\chim{\ell}\}$. 
	\item[(iv)] $\asg_{\ell}^{\circ}\in {\Xi}$, the tree that arises if we add a child~$\chim{\ell}$ to $\ell$ in~$\asg$ and mark $\ell$ with $\circ$. In particular, $V(\asg_{\ell}^{\circ})=V(\asg) \cup\{\chim{\ell}\}$ and $L(\asg_{\ell}^{\circ})=(L(\asg)\setminus\{\ell\})\cup\{\chim{\ell}\}$. 
\end{enumerate}
\begin{figure}[b]
	
	\begin{minipage}{.15\textwidth}
		\begin{center}
			\scalebox{.6}{\begin{tikzpicture}
				\draw[opacity=0] (0,-.2)--(0,2.7);
				\begin{scope}[every node/.style={regular polygon,regular polygon sides=7,draw}, minimum size=13pt]
				\node[label=90:{\scalebox{1.5}{$\rho$}}] (h21) at (0,0) {};
				\end{scope}
				\end{tikzpicture}}~~\\
			$\asg_{}^{}$~~
		\end{center}
	\end{minipage}\begin{minipage}{.23\textwidth}
	\begin{center}
		\scalebox{.6}{\begin{tikzpicture}
			\draw[opacity=0] (0,-.2)--(0,2.7);
			\begin{scope}[every node/.style={regular polygon,regular polygon sides=7,draw}, minimum size=13pt]
				\node (h11) at (0,0) {};
			\node[label=90:{\scalebox{1.5}{$\chir{\rho}$}}] (h21) at (0.5,1) {};			\node[label=90:{\scalebox{1.5}{$\chil{\rho}$}}] (h31) at (-0.5,1) {};
			\end{scope}
			\begin{scope}[>={latex[black]}, every node/.style={fill=white,circle},
			every edge/.style={draw=black,very thick}]
			\path [-] (h11) edge (h21);
			\path [-] (h11) edge (h31);
			\end{scope}
			\end{tikzpicture}}~~\\
		$(i)~\asg_{\rho}^{\nwedge}$
	\end{center}
\end{minipage}\begin{minipage}{.23\textwidth}
	\begin{center}
		\scalebox{.6}{\begin{tikzpicture}
			\draw[opacity=0] (0,-.2)--(0,2.7);
			\begin{scope}[every node/.style={regular polygon,regular polygon sides=7,draw}, minimum size=13pt]
			\node (h11) at (0,0) {};
			\node[label=90:{\scalebox{1.5}{$\chim{\rho}$}}] (h21) at (0,1) {};
			\node[label=90:{\scalebox{1.5}{$\chil{\rho}$}}] (h31) at (-1,1) {};
			\node[label=90:{\scalebox{1.5}{$\chir{\rho}$}}] (h41) at (1,1) {};
			\end{scope}
			\begin{scope}[>={latex[black]}, every node/.style={fill=white,circle},
			every edge/.style={draw=black,very thick}]
			\path [-] (h11) edge (h21);
			\path [-] (h11) edge (h31);
			\path [-] (h11) edge (h41);
			\end{scope}
						
			\end{tikzpicture}}~~\\
		$(ii)~\asg_{\rho}^{\pitch}$
	\end{center}
\end{minipage}\begin{minipage}{.18\textwidth}
\begin{center}
	\scalebox{.6}{\begin{tikzpicture}
		\draw[opacity=0] (0,-.2)--(0,2.7);
		\begin{scope}[every node/.style={regular polygon,regular polygon sides=7,draw}, minimum size=13pt]
		\node (h11) at (0,0) {};
		\node[label=90:{\scalebox{1.5}{$\chim{\rho}$}}] (h21) at (0,1) {};
		\end{scope}
		\node at (0,0) {\scalebox{1.6}{$\times$}} ;
		\begin{scope}[>={latex[black]}, every node/.style={fill=white,circle},
		every edge/.style={draw=black,very thick}]
		\path [-] (h11) edge (h21);
		\end{scope}
		\end{tikzpicture}}~~\\
	\vspace{0.15cm} $(iii)~\asg_{\rho}^{\times}$
\end{center}
\end{minipage}\begin{minipage}{.18\textwidth}
\begin{center}
	\scalebox{.6}{\begin{tikzpicture}
		\draw[opacity=0] (0,-.2)--(0,2.7);
		\begin{scope}[every node/.style={regular polygon,regular polygon sides=7,draw}, minimum size=13pt]
		\node (h11) at (0,0) {};
		\node[label=90:{\scalebox{1.5}{$\chim{\rho}$}}] (h21) at (0,1) {};
		\end{scope}
		\begin{scope}[>={latex[black]}, every node/.style={fill=white,circle},
		every edge/.style={draw=black,very thick}]
		\path [-] (h11) edge (h21);
		\end{scope}
		\draw (0,0) circle (1.3mm)  [fill=white!100]; 
		\end{tikzpicture}}~~\\
	$(iv)~\asg_{\rho}^{\circ}$
\end{center}
\end{minipage}
\caption[Transition eASG]{A tree~$\asg\in \Xi$ that is a single root $\rho$ and its transformations (i)-(iv) used in the eASG process.}
\label{fig:asgprocess}
\end{figure}

Note that ${\Xi}$ is invariant under these transformations. We can now define the process that captures the embedded tree structure of an evolving ASG.
\begin{definition}\label{def:eASGprocess}
The \emph{embedded ASG (eASG) process} is the continuous-time Markov chain $(\brea_t)_{t\geq 0}$ on~${\Xi}$ with the following transition rates. For $\asg\in {\Xi}$ and $\ell\in L(\asg)$, \begin{align*}
q_a(\asg,\asg_\ell^\nwedge)&=s,\qquad  q_a(\asg,\asg_\ell^\pitch)=\gamma,\qquad q_a(\asg,\asg_\ell^\circ)=u\nu_0,\qquad q_a(\asg,\asg_\ell^\times)=u\nu_1.
\end{align*}
\end{definition}
The connection to the ASG is the reason we will occasionally refer to the marks~$\times$ and~$\circ$ also as deleterious and beneficial mutations. The next result relates the eASG process to the mutation--selection equation.
 \begin{theorem}[Duality eASG] \label{sec:mainresults:thm:dualityASG}
Let $(\brea_t)_{t\geq 0}$ be the embedded ASG process and let~$y(\cdot;y_0)$ be the solution of the ODE~\eqref{eq:dlimitdiffeq} with initial value $y_0\in [0,1]$. Then, for $\asg \in {\Xi}$ and $t\geq 0$, $$H\big (\asg,y(t;y_0) \big )=\E_\asg[H(\brea_t,y_0)],$$ where the subscript indicates the initial value. In particular,
$y(t;y_0)=\E_{\rootsin}[H(\brea_t,y_0)].$
\end{theorem}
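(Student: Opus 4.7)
The plan is to establish the duality via a generator-identity argument: show that
\begin{equation*}
F(z)\,\partial_z H(\asg, z) \;=\; (\mathcal{L}H(\cdot, z))(\asg) \qquad \text{for all } \asg \in \Xi^\star,\ z \in [0,1],
\end{equation*}
where $\mathcal{L}$ is the generator of the eASG process read off from Definition~\ref{def:eASGprocess}. Granted this identity, the chain rule on the ODE side and Dynkin's formula on the stochastic side imply that both $t \mapsto H(\asg, y(t; y_0))$ and $t \mapsto \E_\asg[H(a_t, y_0)]$ are continuously differentiable, agree at $t=0$ with value $H(\asg, y_0)$, and satisfy the same linear non-autonomous ODE $\varphi'(t)=F(y(t;y_0))\,\partial_zH(\asg,\cdot)(y(t;y_0))$; standard uniqueness forces them to coincide for all $t\geq 0$.

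The main technical content is the generator identity. I would first extend $H(\asg,\cdot)$ to a function $H(\asg,(z_\ell)_{\ell\in L_\asg})$ of independent Bernoulli parameters and argue by induction on the tree structure that this extension is \emph{multilinear} in the $z_\ell$: by Definition~\ref{typro}, binary nodes combine their children's probabilities multiplicatively and ternary nodes via a cubic inclusion--exclusion, each over subtrees with disjoint leaf sets, and multilinearity is preserved under such compositions. Consequently, for any leaf $\ell \in L_\asg$ and any of the local transformations $\bullet \in \{\nwedge,\pitch,\times,\circ\}$,
\begin{equation*}
H(\asg^\ell_\bullet, z) - H(\asg, z) \;=\; \bigl(w_\bullet(z) - z\bigr)\,\partial_{z_\ell}H(\asg, z),
\end{equation*}
where $w_\bullet(z)$ is the effective contribution at the position of $\ell$ after the event. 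Direct computation from Definition~\ref{typro} gives $w_\nwedge(z) = z^2$, $w_\pitch(z) = z(1-(1-z)^2) = 2z^2 - z^3$, $w_\times(z) = 1$, and $w_\circ(z) = 0$. Weighting by the respective rates $s,\gamma,u\nu_0,u\nu_1$, summing over leaves, and using $\partial_z H(\asg, z) = \sum_{\ell \in L_\asg}\partial_{z_\ell}H(\asg, z)$ (chain rule),
\begin{equation*}
(\mathcal{L}H(\cdot,z))(\asg) \;=\; \Bigl[-sz(1-z) - \gamma z(1-z)^2 + u\nu_1(1-z) - u\nu_0 z\Bigr]\,\partial_z H(\asg, z) \;=\; F(z)\,\partial_z H(\asg, z),
\end{equation*}
matching exactly the $F$ defined in~\eqref{eq:dlimitdiffeq}.

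To close the argument, a couple of mild technical checks remain: $H(\cdot,z)$ is bounded by $1$ on $\Xi^\star$, and the eASG is non-explosive because its number of leaves is dominated by a Yule-type pure-birth process with rate at most $(s+2\gamma)n$ in state $n$, so Dynkin's formula applies and $t \mapsto \E_\asg[H(a_t, y_0)]$ is indeed differentiable with derivative $\E_\asg[(\mathcal{L}H(\cdot,y_0))(a_t)]$. Together with the generator identity and Cauchy--Lipschitz uniqueness (applicable since $F$ is polynomial), this proves the first assertion. The particular case is immediate: for $\asg=\rootsin$, the root is itself the unique (unmarked) leaf, so $H(\rootsin, z) = z$, and therefore $H(\rootsin, y(t;y_0)) = y(t;y_0) = \E_{\rootsin}[H(a_t,y_0)]$. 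The main obstacle I anticipate is stating and using the multilinearity cleanly (since the tree structure varies with $\asg^\ell_\bullet$), and a careful, but routine, verification that Dynkin's formula is applicable despite the unbounded transition rates of the eASG process.
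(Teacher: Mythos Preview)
Your derivation of the generator identity is correct and in fact a little slicker than the paper's: where the paper proves Lemma~\ref{lem:generatorequality} by induction on the number of vertices using the recursive decomposition of $H$ from Lemma~\ref{Hep}, you exploit multilinearity in the per-leaf Bernoulli parameters to obtain $H(\asg^\ell_\bullet,z)-H(\asg,z)=(w_\bullet(z)-z)\,\partial_{z_\ell}H(\asg,z)$ in one stroke. Both routes yield the same equalities~\eqref{eq:g1}--\eqref{eq:g4}; the paper then concludes by invoking the generator criterion for duality \cite[Prop.~1.2]{jansen2014}.

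However, your passage from the generator identity to the duality has a genuine gap. You claim that both $t\mapsto H(\asg,y(t;y_0))$ and $t\mapsto\E_\asg[H(a_t,y_0)]$ satisfy the relation $\varphi'(t)=F(y(t;y_0))\,\partial_zH(\asg,y(t;y_0))$. This holds for the first function by the chain rule, but Dynkin's formula gives for the second one
\[
\frac{\dd}{\dd t}\,\E_\asg[H(a_t,y_0)]
=\E_\asg\bigl[(\mathcal{L}H(\cdot,y_0))(a_t)\bigr]
=F(y_0)\,\E_\asg\bigl[\partial_z H(a_t,y_0)\bigr],
\]
since the generator identity is applied at $z=y_0$ and at the \emph{random} tree $a_t$, not at $z=y(t;y_0)$ and the fixed tree $\asg$. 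There is no reason for $F(y_0)\,\E_\asg[\partial_z H(a_t,y_0)]$ to equal $F(y(t;y_0))\,\partial_z H(\asg,y(t;y_0))$ without already knowing the duality (this would amount to a duality for $\partial_z H$). The fix is standard: interpolate via $g(r)\defeq\E_\asg\bigl[H\bigl(a_r,y(t-r;y_0)\bigr)\bigr]$ for $r\in[0,t]$ and compute, using Dynkin on the first argument and the chain rule on the second together with your generator identity at $z=y(t-r;y_0)$, that $g'(r)\equiv 0$; then $g(0)=H(\asg,y(t;y_0))$ equals $g(t)=\E_\asg[H(a_t,y_0)]$. This is precisely what the cited result of Jansen--Kurt packages, and is the step the paper actually takes.
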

The proof of Theorem~\ref{sec:mainresults:thm:dualityASG} is provided in Section~\ref{sec:proofseASG}.

\begin{remark}
	The duality is a special case of a more general result within the framework of recursive tree processes. More precisely, Theorem~\ref{sec:mainresults:thm:dualityASG} follows from~\citet[Thm. 1.6]{MSS2018} together with~\citep[Rem. 1.7]{MSS2018}. The authors apply their result in the setup of Remark~\ref{rem:cooperativebranching}; then our deleterious mutations and ternary branchings translate to their local maps $\mathtt{dth}$ (`deaths') and $\mathtt{cob}$ (`cooperative branchings'), respectively. The propagation rule in Definition~\ref{typro}~(2) and~(4) coincides with \citep[Eq.~$(1.13)$]{MSS2018} (with their~$1$'s being our $0$'s).
\end{remark}

\smallskip

A natural way of computing~$H(\brea_t,y_0)$ is to determine first those leaf-type configurations of $\brea_t$ that lead to an unfit root and then to evaluate the probability of observing these leaf-type configurations if each leaf type is independently sampled $0$ and $1$ according to $(1-y_0,y_0)$. This is the approach pursued by~\citet{Mach2017}, but the general idea is also present in the work of~\citet[Ch.~5.5]{dawson2014spatial}. 

\smallskip

In contrast, we aim at resolving all information contained in the evolving tree on the spot. This requires to transform (i.e. prune and graft) the eASG upon mutations via suitable operations, which we introduce in the next subsection; this leads to a second duality for~\eqref{eq:dlimitdiffeq}. It turns out that the resulting trees can be condensed even further, leading to a simpler process and to a third duality for~\eqref{eq:dlimitdiffeq}. We use this more tractable process  in Section~\ref{sec:mainresults:subsec:application} to establish the connection between the long-term behaviour of the eASG and the bifurcation structure of~\eqref{eq:dlimitdiffeq}. Before we lay out the details of the transformations, we briefly explain the simplifications in the non-interactive case ($\gamma=0$), where the concepts become particularly transparent. This is the model treated in~\citep{BCH17}.
\smallskip

The root of an eASG \emph{without ternary branchings and mutations} gets the unfit type if and only if all leaves are assigned the unfit type; this follows from the type propagation and holds regardless of the tree structure. Now allow for mutations. The type of a marked vertex is determined by the type of the mutation mark. In particular, the types of the descendants of such a vertex do not propagate beyond that vertex and are thus irrelevant for  the type of the root, so that we can remove (or \emph{prune}) them. If all such descendants are removed from the tree, only leaves can have marks. If one of them is marked with~$\circ$, the root gets the fit type, irrespective of the types of the other leaves; we thus stop reading the eASG and kill it, that is, send it to a cemetery state~$\Delta$. If there is no leaf marked with $\circ$, the root gets the unfit type if and only if all unmarked leaves are assigned the unfit type --- irrespective of the tree structure. The pruning can be implemented dynamically as the eASG process evolves, and the information required for the root type can be condensed by only counting the number of unmarked leaves.

\smallskip

Let $\Rs_r$ be the number of unmarked leaves in the eASG process at time~$r$, where $\Rs_r\defeq\Delta$ if the eASG has been sent to~$\Delta$. The (generalised) leaf-counting process $\Rs\defeq (\Rs_r)_{r\geq 0}$ is a continuous-time Markov chain on~$\N_0^\Delta\defeq\N_0\cup\{\Delta\}$ with transition rates $$q_{\Rs}^{}(k,k+1)=ks,\quad q_{\Rs}^{}(k,k-1)=ku\nu_1,\quad q_{\Rs}^{}(k,\Delta)=ku\nu_0, \qquad k\in \N_0,$$
which reflect that selection leads to an additional leaf, a leaf is pruned immediately when it experiences a deleterious mutation, and the entire process is killed when a beneficial mutation arrives. The process has absorbing states $0$ and $\Delta$, where absorption in~$0$ (in $\Delta$) implies that the root of the eASG gets the unfit (fit) type. The process~$\Rs$ is in moment duality with the mutation--selection model without interaction (see \citep[Thm.2]{BCH17}), that is we have for $y_0\in [0,1]$ and~$n\in \N_{0}^{\Delta}$,\begin{equation}\label{eq:killedASGduality}
y(t;y_0)^n=\E_{n}\big[y_0^{\Rs_t}\big].
\end{equation}

\subsection{Profiting from mutations: the pruned ASG}\label{sec:mainresults:subsec:pASG}
The reasoning underlying~\eqref{eq:killedASGduality} does not directly translate if the tree has ternary branchings. We can still safely remove all descendants of marked vertices without altering the type at the root; but a leaf with mark~$\circ$ in the remaining tree does not necessarily imply that the root is fit (recall Fig.~\ref{fig:peckingorderint}). To circumvent this problem, we now introduce appropriate pruning operations. In the Boolean function $c\mapsto\cpv{\varrho(\alpha)}$, these operations correspond to the removal of variables, which do not alter the value of the function. First, we explain the state space of the pruned trees.
\smallskip

Since we will remove the descendants of marked vertices, our pruned trees will only have marks on the leaves. In addition, we will get rid of any mark $\circ$ arising in the eASG, unless it propagates to the root, which then results in the tree  $\rootben$ consisting only of the root marked with $\circ$ (playing the role of $\Delta$ in~$\Rs$). We will also get rid of any mark $\times$, unless it is on a leaf that is the left child of a vertex with outdegree $3$ or it propagates to the root. In the latter case, it becomes the tree $\rootdel$ that consists only of the root marked with $\times$ (playing the role of $0$ in~$\Rs$). The resulting set of \emph{pruned trees} is denoted by~$\Xi^\pA$; it consists of $\rootdel$, $\rootben$, and all ordered rooted trees with vertices of outdegree $0$, $2$ or $3$, and leaves that can have  mark $\times$ only if they are the left child of a vertex with outdegree $3$. For $\bal\in\Xi^\pA$, denote by $\hat{L}(\bal)$ the set of its unmarked leaves. The notion of type propagation given in Definition \ref{typro} can be extended to $\Xi^\pA$ as follows.
\begin{definition}(Type propagation in $\Xi^\pA$)\label{typrob}
Let $\bal\in\Xi^\pA$. A reduced leaf-type configuration of $\bal$ is a vector $\hat{c}:=(\hat{c}_\ell)_{\ell\in {\hat{L}}(\bal)}\in\{0,1\}^{\hat{L}(\bal)}$. The leaf-type configuration induced by $\hat{c}$ is the vector $c:=(c_\ell)_{\ell\in {L}(\bal)}$ defined via $c_\ell\defeq \hat{c}_{\ell}$ for $\ell\in\hat{L}(\bal)$ and $c_\ell=1$ ($c_\ell=0$) for $\ell\in L(\bal)\setminus\hat{L}(\bal)$ with mark $\times$ (with mark $\circ$). The vertex-type propagation of $\hat{c}$ is the vector $\chr:=(\chv{v})_{v\in V(\bal)}$ obtained as the vertex-type configuration of $c$ in $\bal$ (after removing the marks from $\bal$) in the sense of Definition \ref{typro}.
\end{definition}
\smallskip

Let~$\bal\in\Xi^\pA$, $z\in[0,1]$, and for each~$\ell\in \hat{L}(\bal)$, let $\hat{C}_\ell(z)$ be a Bernoulli random variable with parameter~$z$; so $\hat{C}(z)\defeq (\hat{C}_\ell(z))_{\ell\in \hat{L}(\bal)}$ is a random reduced leaf-type configuration. Define~$H(\bal, z)$ to be the probability that the root of $\bal$ gets the unfit type under the type propagation, given $\hat{C}(z)$.

\smallskip

\begin{definition}[Admissible pruning]\label{adp} We say that $\bal\in\Xi^{\pA}$ is an \emph{admissible pruning} of $\alpha\in\Xi$ if:
	\begin{enumerate}
		\item[(1)] $V(\bal)\subset V(\alpha)$ and $\hat{L}(\bal)\subset L(\alpha)$,
		\item[(2)] for any $u,v\in V(\bal)$, $u \prec_\bal v$ implies $u \prec_\alpha v$,
		\item[(3)] for any leaf-type configuration ${c}=({c}_\ell)_{\ell\in L(\alpha)}$ of $\alpha$, the type assigned to $\rho(\alpha)$ under $c$ coincides with the type assigned to $\rho(\bal)$ under the reduced leaf-type configuration $(c_\ell)_{\ell\in \hat{L}(\bal)}$.
	\end{enumerate}
\end{definition}

In what follows, we construct a process $(\ba_t)_{t\geq 0}$ on~$\Xi^\pA$ that can be coupled to the eASG process $(\brea_t)_{t\geq0}$ such that for any~$t\geq 0$, $\ba_t$ is an admissible pruning of $\brea_t$. 

\smallskip 

The operators $\asg\mapsto \asg^\curlyvee_\ell$ and $\asg\mapsto \asg^\pitch_\ell$ that we have defined to act on ${\Xi}$ translate to operators on $\Xi^\pA$ in the obvious way. For mutations, we define modified mutation operators that prune away all vertices that become irrelevant for the type of the root. We start with the beneficial mutations.
To this end, note that a beneficial mutation in some vertex~$v$ determines the types of all  ancestors of~$v$ up to the first ancestor that is a middle or right child in a ternary branching event.
\begin{figure}[t]\begin{minipage}{0.3\textwidth}
		\begin{center}
			\scalebox{.6}{\begin{tikzpicture}
					\draw[opacity=0] (.5,0)--(2.5,4.5);
					\begin{scope}[every node/.style={regular polygon,regular polygon sides=7,draw}, minimum size=7pt]
						\node[fill=gray!100] (h0) at (4,0) {};
						
						\node[fill=gray!100] (h1) at (3,1) {};
						\node[fill=gray!100] (h2) at (5,1) {};
						
						\node[fill=gray!100] (h3) at (2,2) {};
						\node[fill=gray!100] (h4) at (2.75,2) {};
						\node (h5) at (3.5,2) {};
						\node (h6) at (4.5,2) {};
						\node[fill=gray!100] (h7) at (5.5,2) {};

						\node (h31) at (2.45,3) {};
						
						\node (h33) at (3.05,3) {};
						
						\node[fill=gray!100] (h9) at (4.75,3) {};
						\node (h10) at (5.5,3) {};
						\node   (h11) at (6.25,3) {};
						
					\end{scope}

					\begin{scope}[>={latex[black]}, every node/.style={fill=white,circle},
						every edge/.style={draw=black,very thick}]
						
						\path [-] (h0) edge (h1);
						\path [-] (h0) edge (h2);
						
						\path [-] (h1) edge (h3);
						\path [-] (h1) edge (h4);
						\path [-] (h1) edge (h5);
						
						\path [-] (h4) edge (h31);
						\path [-] (h4) edge (h33);

						\path [-] (h2) edge (h6);
						\path [-] (h2) edge (h7);

						\path [-] (h7) edge (h9);
						\path [-] (h7) edge (h10);
						\path [-] (h7) edge (h11);

					\end{scope}
					\begin{pgfonlayer}{background}
						\highlight{4.5mm}{cyan}{(4.75,3) -- (5.5,2) -- (5,1) -- (4.5,2) -- (5,1) -- (4,0) -- (3,1) -- (2,2)}
						\highlight{4.5mm}{cyan}{(5.5,2.95) -- (5.5,3.05)}
						\highlight{4.5mm}{cyan}{(6.25,2.95) -- (6.25,3.05)}
						\highlight{4.5mm}{cyan}{(3.5,1.95) -- (3.5,2.05)}
						\highlight{4.5mm}{cyan}{(2.75,1.95) -- (2.45,3) -- (2.75,1.95) -- (3.05,3)--(2.75,2.05)}
					\end{pgfonlayer}		
			\end{tikzpicture}}
	\end{center}	\end{minipage}\begin{minipage}{0.3\textwidth}
		\begin{center}
			\scalebox{.6}{\begin{tikzpicture}
					\draw[opacity=0] (.5,0)--(2.5,4.5);
					\begin{scope}[every node/.style={regular polygon,regular polygon sides=7,draw}, minimum size=7pt]
						\node[fill=gray!100]  (h0) at (4,0) {};
						
						\node[fill=gray!100] (h1) at (3,1) {};
						\node[fill=gray!100] (h2) at (5,1) {};
						
						\node[fill=gray!100] (h3) at (2,2) {};
						\node  (h4) at (2.75,2) {};
						\node  (h5) at (3.5,2) {};
						\node (h6) at (4.5,2) {};
						\node[fill=gray!100] (h7) at (5.5,2) {};

						\node[fill=gray!100] (h31) at (1.25,3) {};
						\node  (h32) at (2,3) {};
						\node (h33) at (2.75,3) {};
						
						\node[fill=gray!100] (h9) at (4.75,3) {};
						\node (h10) at (5.5,3) {};
						\node  (h11) at (6.25,3) {};
						
					\end{scope}

					\begin{scope}[>={latex[black]}, every node/.style={fill=white,circle},
						every edge/.style={draw=black,very thick}]
						
						\path [-] (h0) edge (h1);
						\path [-] (h0) edge (h2);
						
						\path [-] (h1) edge (h3);
						\path [-] (h1) edge (h4);
						\path [-] (h1) edge (h5);
						
						\path [-] (h3) edge (h31);
						\path [-] (h3) edge (h32);
						\path [-] (h3) edge (h33);

						\path [-] (h2) edge (h6);
						\path [-] (h2) edge (h7);

						\path [-] (h7) edge (h9);
						\path [-] (h7) edge (h10);
						\path [-] (h7) edge (h11);

					\end{scope}
					\node at (1.25,3) {\scalebox{1.5}{$\times$}} ;						\begin{pgfonlayer}{background}
						\highlight{4.5mm}{cyan}{(4.75,3) -- (5.5,2) -- (5,1) -- (4.5,2) -- (5,1) -- (4,0) -- (3,1) -- (2,2)--(1.25,3)}
						\highlight{4.5mm}{cyan}{(5.5,2.95) -- (5.5,3.05)}
						\highlight{4.5mm}{cyan}{(6.25,2.95) -- (6.25,3.05)}
						\highlight{4.5mm}{cyan}{(3.5,1.95) -- (3.5,2.05)}
						\highlight{4.5mm}{cyan}{(2.75,1.95)--(2.75,2.05)}
						\highlight{4.5mm}{cyan}{(2,2.95) -- (2,3.05)}
						\highlight{4.5mm}{cyan}{(2.75,2.95) -- (2.75,3.05)}
					\end{pgfonlayer}		
					
			\end{tikzpicture}}
	\end{center}	\end{minipage} \begin{minipage}{0.3\textwidth}
		\begin{center}
			\scalebox{.6}{\begin{tikzpicture}
					\draw[opacity=0] (.5,0)--(2.5,4.5);
					\begin{scope}[every node/.style={regular polygon,regular polygon sides=7,draw}, minimum size=7pt]
						
						\node[fill=gray!100]  (h0) at (4,0) {};
						
						\node[fill=gray!100](h1) at (3,1) {};
						\node[fill=gray!100] (h2) at (5,1) {};

						\node (h3) at (2.5,2) {};
						\node (h5) at (3.5,2) {};
						\node (h6) at (4.5,2) {};
						\node (h7) at (5.5,2) {};

						\node[fill=gray!100] (h31) at (1.75,3) {};
						\node (h32) at (2.5,3) {};
						\node(h33) at (3.25,3) {};
					\end{scope}
					
					\begin{scope}[>={latex[black]}, every node/.style={fill=white,circle},
						every edge/.style={draw=black,very thick}]
						
						\path [-] (h0) edge (h1);
						\path [-] (h0) edge (h2);
						
						\path [-] (h1) edge (h3);
						
						\path [-] (h1) edge (h5);
						
						\path [-] (h3) edge (h31);
						\path [-] (h3) edge (h32);
						\path [-] (h3) edge (h33);
						
						\path [-] (h2) edge (h6);
						\path [-] (h2) edge (h7);

					\end{scope}
					\node at (1.75,3) {\scalebox{1.5}{$\times$}} ;			\begin{pgfonlayer}{background}
						\highlight{4.5mm}{cyan}{(5.5,2) -- (5,1) -- (4.5,2) -- (5,1) -- (4,0) -- (3,1) -- (2.5,2)--(1.75,3)}
						
						\highlight{4.5mm}{cyan}{(3.5,1.95) -- (3.5,2.05)}
						
						\highlight{4.5mm}{cyan}{(2.5,2.95) -- (2.5,3.05)}
						\highlight{4.5mm}{cyan}{(3.25,2.95) -- (3.25,3.05)}
					\end{pgfonlayer}			
			\end{tikzpicture}}
	\end{center}	\end{minipage}
	
	\caption[Firewalls]{Three pruned trees. Grey vertices are firewalls, and subtrees delimited by sausage-like sets are regions.} 	\label{fire}
\end{figure}

\begin{definition}[Region]
Let $\bal\in\Xi^\pA$. For every vertex~$w\in V(\bal)$ with $\deg(w)=3$, remove the edges $(w,\chim{w})$ and $(w,\chir{w})$. We refer to the connected components in the resulting graph as the \emph{regions} of $\bal$. For~$v\in V(\bal)$, denote by $R_v(\bal)$ the region of $\bal$ that contains~$v$. 
\end{definition}
Clearly, a region is a (maximal) subtree within which the type propagation works as in the non-interactive case. In particular, if~$v$ gets the fit type, then $\rho(R_{v}(\bal))$ gets the fit type as well; however, the type of the parent of $\rho(R_{v}(\bal))$ remains undetermined, so $\rho(R_{v}(\bal))$ acts as a `barrier' to type propagation.
\smallskip

The set of vertices affected by a deleterious mutation is more complicated, and a case by case analysis is required. However, here too, some vertices  act as barriers for the effect of deleterious mutations. 

\begin{definition}[Firewall]\label{def:firewall}
For $\bal\in\Xi^\pA$, $v\in V(\bal)$ is said to be a \emph{firewall} if either (i) $v=\rho(\bal)$, (ii) $\deg(v)=2$, (iii) the parent $w$ of $v$ has $\deg(w)=3$ and $v=\chil{w}$, or (iv) $\deg(v)=3$ and $\chil{v}$ has no mark. For $\ell\in \hat{L}(\bal)$, let $w_{\ell}(\bal)$ be the most recent ancestor of~$\ell$ that is a firewall (note that $w_{\ell}(\bal) \prec_\bal \ell$).  
\end{definition}

Figure~\ref{fire} pictures three trees with their regions and firewalls. Building on these notions, we now introduce the operators that will help us to resolve the mutations arising in the eASG, see also Fig.~\ref{fig:trcirc}.

\begin{definition}[pruning operations]\label{def:pruningoperations}
 Let $\bal\in\Xi^\pA$ and $\ell\in \hat{L}(\bal)$. Define $\pi^\times_\ell(\bal)\in \Xi^\pA$ as follows.
 \begin{enumerate}
  \item[(1a)]  If $\ell$ is a firewall, then $\pi^\times_\ell(\bal)$ is obtained by marking  $\ell$ with~$\times$.
  \item[(1b)]  If $\ell$ is not a firewall and
    \begin{itemize}
  \item if $\deg(w_\ell(\bal))=2$, then $\pi^\times_\ell(\bal)$ is obtained by replacing the subtree rooted in~$w_\ell(\bal)$ by the subtree rooted in the child of $w_\ell(\bal)$ that is not in the path from $w_\ell(\bal)$ to $\ell$.\footnote{Alternatively, this operation may be understood as removing the subtree rooted in the child of $w_\ell(\bal)$ that is the ancestor of $\ell$ and contracting the edge between $w_\ell(\bal)$ and its other child.}
  \item if $\deg(w_\ell(\bal))=3$, then $\pi_\ell^\times(\bal)$ is obtained by replacing the subtree rooted  in~$w_\bal(\ell)$ by the subtree rooted in the left child of ~$w_\ell(\bal)$.
  \end{itemize}
  
\end{enumerate}
  Define $\pi_\ell^\circ(\bal)\in \Xi^\pA$ as follows.
 \begin{enumerate}
  \item[(2a)] If $\rho(\bal)\in R_\ell(\bal)$, set $\pi_\ell^\circ(\bal)\defeq \rootben$,
  \item[(2b)] If $\rho(\bal)\notin R_\ell(\bal)$, let $\bal^*$ be the tree that results when removing from $\bal$ the subtree  rooted in $\rho(R_\ell(\bal))$. Let $v$ be the parent of $\rho(R_\ell(\bal))$ in $\bal$.
  \begin{itemize}
  \item If $\chil{v}$ has no mark, set $\pi_\ell^\circ(\bal)\defeq\bal^*$.
  \item If $\chil{v}$ has a mark (which then necessarily is $\times$), set 
  $\pi_\ell^\circ(\bal)\defeq \pi_{\chil{v}}^\times(\bal^*)$. 
  \end{itemize}
 \end{enumerate}
\end{definition}
\begin{figure}[t]
	\begin{center}
		\begin{minipage}[b]{.33\linewidth}
			\centering\scalebox{.7}{\begin{tikzpicture}
				\begin{scope}[every node/.style={regular polygon,regular polygon sides=7, very thick,draw}, minimum size=10pt]
				\node (h1) at (0,1.5) {};
				\node (l2) at (0,2.5) {};
				\node (h2) at (-2.5,2.5) {};
				\node[fill=gray!100,opacity=0.3] (l3)at (2.5,2.5) {};
				
				\node (l4)at (-3.1,3.5) {};
				\node (l6)at (-1.9,3.5) {};
				\node (l7)at (-0.8,3.5) {};
				\node (l8)at (0,3.5) {};
				\node (l9)at (0.8,3.5) {};
				\node(l10)at (1.7,3.5) {};
				\node[opacity=0.2] (l11)at (2.5,3.5) {};
				\node[opacity=0.2] (l12)at (3.3,3.5) {};
				
				\node[opacity=0.2] (l13)at (3.3,4.5) {};
				\node[opacity=0.2] (l14)at (2.5,4.5) {};
				\node[opacity=0.2] (l15)at (4.1,4.5) {};
				\end{scope}
				\node[opacity=0.2] at (2.5,4.5) {\scalebox{1.5}{$\times$}};
				\node at (3.3,5.2) {\scalebox{1.5}{$\overset{\times}{\downarrow}$}};
				\begin{scope}[>={latex[black]}, every node/.style={regular polygon,regular polygon sides=7,draw},
				every edge/.style={draw=black,very thick}]
				\path [-] (h1) edge (h2);
				\path [-] (h1) edge (l2);
				\path [-] (h1) edge (l3);
				\path [-] (h2) edge (l4);
				\path [-] (h2) edge (l6);
				
				\path [-] (l2) edge (l7);
				\path [-] (l2) edge (l8);
				\path [-] (l2) edge (l9);
				
				\path [opacity=0.3] (l3) edge (l10);
				\path [opacity=0.3] (l3) edge (l11);
				\path [opacity=0.3](l3) edge (l12);
				
				\path [opacity=0.3] (l12) edge (l13);
				\path [opacity=0.3] (l12) edge (l14);
				\path [opacity=0.3] (l12) edge (l15);
				\end{scope}
				\end{tikzpicture}}
		\end{minipage}
		\begin{minipage}[b]{.33\linewidth}
			\centering\scalebox{.7}{\begin{tikzpicture}
				\begin{scope}[every node/.style={regular polygon,regular polygon sides=7,very thick,draw}, minimum size=7pt]
				\node (h1) at (0,1.5) {};
				\node[fill=gray!100,opacity=0.3] (l2) at (0,2.5) {};
				\node (h2) at (-2.5,2.5) {};
				\node (l3)at (2.5,2.5) {};
				\node (l4)at (-3.1,3.5) {};

				\node (l6)at (-1.9,3.5) {};
				\node[opacity=0.3] (l7)at (-0.8,3.5) {};
				\node[opacity=0.3] (l8)at (0,3.5) {};
				\node[opacity=0.3] (l9)at (0.8,3.5) {};
				\end{scope}
								\node at (-0.8,4.2) {\scalebox{1.5}{$\overset{\circ}{\downarrow}$}};
				\begin{scope}[>={latex[black]}, every node/.style={regular polygon,regular polygon sides=7,draw},
				every edge/.style={draw=black,very thick}]
				\path [-] (h1) edge (h2);
				\path [opacity=0.3] (h1) edge (l2);
				\path [-] (h1) edge (l3);
				\path [-] (h2) edge (l4);
				\path [-] (h2) edge (l6);
				
				\path [opacity=0.3] (l2) edge (l7);
				\path [opacity=0.3] (l2) edge (l8);
				\path [opacity=0.3] (l2) edge (l9);
				\end{scope}
				\end{tikzpicture}}
		\end{minipage}\begin{minipage}[b]{.19\linewidth}
			\centering\scalebox{.7}{\begin{tikzpicture}
				\begin{scope}[every node/.style={regular polygon,regular polygon sides=7,very thick, draw}, minimum size=7pt]
				\node[fill=gray!25] (h1) at (-1.5,1.5) {};
				\node[opacity=0.3] (h2) at (-2.5,2.5) {};
				\node[opacity=0.3] (l3)at (-0.5,2.5) {};
				\node[opacity=0.3] (l4)at (-3.1,3.5) {};

				\node[opacity=0.3] (l6)at (-1.9,3.5) {};
				\end{scope}
								\node at (-1.9,4.2) {\scalebox{1.5}{$\overset{\circ}{\downarrow}$}};
				\begin{scope}[>={latex[black]}, every node/.style={regular polygon,regular polygon sides=7,draw},
				every edge/.style={draw=black,very thick}]
				\path [opacity=0.3] (h1) edge (h2);
				\path [opacity=0.3](h1) edge (l3);
				\path [opacity=0.3] (h2) edge (l4);
				\path [opacity=0.3](h2) edge (l6);
				\end{scope}
				\end{tikzpicture}}
		\end{minipage}\begin{minipage}[b]{.05\linewidth}
			\centering\scalebox{.7}{\begin{tikzpicture}
 \begin{scope}[every node/.style={regular polygon,regular polygon sides=7,very thick,draw}, minimum size=7pt]
				\node (h1) at (0,1.5) {};
				\end{scope}
\draw (0,1.5) circle (1mm)  [fill=white!100];
				\end{tikzpicture}}
		\end{minipage}
	\end{center}
	\caption[Pruning operators]{Each tree in the picture (except the first) is obtained from its left neighbour via the pruning operator~$\pi_\ell^\star$, where $\ell$ is the leaf indicated by $\downarrow$, and $\star\in\{\times,\circ\}$ is the symbol above $\downarrow$. The filled grey vertex in the leftmost tree is $w_\ell(\bal)$, i.e. the most recent ancestor of $\ell$ that is a firewall; the filled grey vertex in the second and third tree is~$\rho(R_\ell(\bal))$, i.e. the root of the region containing $\ell$.}
	\label{fig:trcirc}
\end{figure}

Now, we construct the process $(\ba_t)_{t\geq 0}$, which we call the  \emph{pruned ASG (or pASG) process}. The idea is to dynamically resolve the mutations arising in the eASG by replacing the mutation operations by the  pruning operations. More precisely, let~$\bal\in\Xi^{\pA}$ and set $\ba_0\defeq \bal$. Define $\asg\in{\Xi}$ by adding a child without a mark to every marked leaf of $\bal$. Note that $\bal$ is an admissible pruning of $\asg$. Let $\brea=(\brea_t)_{t\geq0}$ be the eASG process starting at $\asg$. Construct $(\ba_t)_{t\geq 0}$ by updating its state at any transition of $(\brea_t)_{t\geq0}$ as follows. Assume $(\ba_t)_{t\in[0,r)}$ has been constructed in such a way that, for any $t\in[0,r)$, $\ba_t$ is an admissible pruning of $\brea_t$, and that at time~$r$ a transition occurs in $\brea$ at some leaf $\ell\in L(\brea_{r-})$. If $\ell\notin \hat{L}(\ba_{{r-}})$, set $\ba_{{r}}=\ba_{{r-}}$. If $\ell\in \hat{L}(\ba_{r-})$ and $\brea_r= {(\brea_{r-})}^{\star}_\ell$ with $\star\in\{\curlyvee,\pitch\}$, set $\ba_r:= {(\ba_{r-})}^{\star}_\ell$. If $\ell\in \hat{L}(\ba_{r-})$ and $\brea_r= {(\brea_{r-})}^{\star}_\ell$ with $\star\in\{\times,\circ\}$, set $\ba_r:= \pi_\ell^\star(\ba_{r-})$.

\smallskip

In Section~\ref{sec:proofspASG} we show that the pASG process is a continuous-time Markov chain on $\Xi^\pA$ with absorbing states $\rootdel$ and $\rootben$, and with the following transition rates. For $\bal\in\Xi^\pA$ and $\ell\in \hat{L}(\bal)$,
  $$q_{\ba}(\bal,\bal^\curlyvee_\ell)=s,\quad q_{\ba}(\bal,\bal^\pitch_\ell)=\gamma,\quad q_{\ba}(\bal,\pi_{\ell}^\times(\bal))=u\nu_1,\quad q_{\ba}(\bal,\pi_{\ell}^\circ(\bal))=u\nu_0.$$

Moreover, we show that the so-constructed process $(\ba_t)_{t\geq 0}$ satisfies (1), (2) and (3) of Definition \ref{adp}. Property (3) translates the duality in Theorem~\ref{sec:mainresults:thm:dualityASG} to the pASG process. 
 \begin{coro}[Duality pASG] \label{dualpASG}
Let $(\ba_t)_{t\geq 0}$ be the pASG process and let~$y(\cdot;y_0)$ be the solution of the ODE~\eqref{eq:dlimitdiffeq} with initial value $y_0\in [0,1]$. Then, for $\bal \in \Xi^{\pA}$ and $t\geq 0$, $$H \big (\bal,y(t;y_0) \big )=\E_{\bal}[H(\ba_t,y_0)].$$
\end{coro}
The proof of Corollary~\ref{dualpASG} is also provided in Section~\ref{sec:proofspASG}.

\begin{remark}
	If $s=\nu_0=0$ and $u=1$, Corollary \ref{dualpASG} resembles a duality of the mean-field limit of a cooperative branching process on the complete graph~\citep[Prop.~I.2.1.4]{Mach2017}. The main difference is that the dual process in~\citep{Mach2017} does \emph{not} resolve all information contained in the mutation marks on the spot.
\end{remark}

\subsection{The stratified ASG}\label{sec:mainresults:subsec:sASG}
We have seen above that within each region, types propagate as in the non-interactive case. To determine the type distribution at the root of a region, it therefore suffices to count  the leaves within the region; the tree structure within it is irrelevant. Fix a sampling distribution at the leaves. For the type distribution of the root of the entire tree, it is then sufficient to count leaves within each region and to determine the connections between the regions. We encode this structure by collapsing regions into single vertices, which leads us to a new kind of trees. The corresponding transformations will simplify the representation of the Boolean function $c\mapsto\cpv{\varrho(\alpha)}$.

\begin{definition}[Primary and secondary vertices, tripod tree]
A \emph{tripod tree} is a directed rooted tree, where every vertex at an odd distance from the root has outdegree 2. Let $\Xi^\trip$ denote the set of tripod trees, and for every $\tau \in \Xi^\trip$, let $V^1(\tau)$ and $V^2(\tau)$ be the set of vertices at an even and odd distance to the root, respectively. $V^1(\tau)$ and $V^2(\tau)$ will be called primary and secondary vertices, respectively.
A \emph{weighted tripod tree} is a pair $\Ts = (\tau,m)$ such that $\tau\in \Xi^\trip$ and $m:V^1(\tau)\to\N_0$ with $m(\ell)>0$ for every $\ell\in L(\tau)$. Let $\Upsilon$ denote the set of weighted tripod trees. Moreover, $\Upsilon_\Delta\defeq\Upsilon\cup\{\Delta\}$, where $\Delta$ is an isolated point.
\end{definition}
We denote the weighted tripod tree that consists only of a root of weight~$n$ by $\sroot{n}$. The set $\Upsilon_\Delta$ is countable and therefore becomes a Polish space when equipped with the discrete topology. In contrast to the  trees from the previous section, the trees here are unordered. Each primary vertex corresponds to a region in a pruned tree with the weight indicating the number of leaves in that region. The secondary vertices encode the connections between the regions.
\begin{figure}[h!]
\begin{minipage}{0.3\textwidth}
			\begin{center}
				\scalebox{.5}{\begin{tikzpicture}
					\begin{scope}[every node/.style={circle,thick,draw}, minimum size=20pt]
					\node[fill=gray!80] (h0) at (4,0) {$0$};

					\node (h2) at (3,2) {$2$};
					\node (h3) at (5,2) {$5$};
		
				\end{scope}
					\begin{scope}[every node/.style={circle,thick,draw}, minimum size=1pt]
                    \node[fill=black!80]  (h1) at (4,1) {};
					\end{scope}

					\begin{scope}[>={latex[black]}, every node/.style={fill=white,circle},
					every edge/.style={draw=black,very thick}]
					
					\path [-] (h0) edge (h1);
					\path [-] (h1) edge (h2);
					\path [-] (h1) edge (h3);
					\end{scope}

					\end{tikzpicture}}
		\end{center}	\end{minipage}\begin{minipage}{0.3\textwidth}
			\begin{center}
				\scalebox{.5}{\begin{tikzpicture}
					\begin{scope}[every node/.style={circle,thick,draw}, minimum size=20pt]
					\node[fill=gray!80] (h0) at (4,0) {$1$};

					\node (h2) at (3,2) {$2$};
					\node (h3) at (5,2) {$1$};

					\node (h5) at (6,1) {$4$};
					\node (h6) at (6,-1) {$1$};
					
					\node (h8) at (2,1) {$5$};
					\node (h9) at (2,-1) {$1$};
		
				\end{scope}
					\begin{scope}[every node/.style={circle,thick,draw}, minimum size=1pt]
                    \node[fill=black!80]  (h1) at (4,1) {};
                    \node[fill=black!80](h4) at (5,0) {};
                    \node[fill=black!80](h7) at (3,0) {};
					\end{scope}

					\begin{scope}[>={latex[black]}, every node/.style={fill=white,circle},
					every edge/.style={draw=black,very thick}]
					
					\path [-] (h0) edge (h1);
					\path [-] (h1) edge (h2);
					\path [-] (h1) edge (h3);
					
                    \path [-] (h0) edge (h4);
					\path [-] (h4) edge (h5);
					\path [-] (h4) edge (h6);

					\path [-] (h0) edge (h7);
					\path [-] (h7) edge (h8);
					\path [-] (h7) edge (h9);
					
					\end{scope}

					\end{tikzpicture}}
		\end{center}	\end{minipage} \begin{minipage}{0.3\textwidth}
			\begin{center}
				\scalebox{.5}{\begin{tikzpicture}
					\begin{scope}[every node/.style={circle,thick,draw}, minimum size=20pt]
					\node[fill=gray!80] (h0) at (4,0) {$1$};

					\node (h2) at (3,2) {$8$};
					\node (h3) at (5,2) {$1$};

					\node (h5) at (6,1) {$4$};
					\node (h6) at (6,-1) {$1$};
					
					\node (h8) at (0,2) {$5$};
					\node (h9) at (0,0) {$1$};

					\node  (h11) at (2,-1) {$1$};
					\node  (h12) at (2,1)  {$0$};
		
				\end{scope}
					\begin{scope}[every node/.style={circle,thick,draw}, minimum size=1pt]
                    \node[fill=black!80] (h1) at (4,1) {};
                    \node[fill=black!80] (h4) at (5,0) {};
                    \node[fill=black!80] (h7) at (1,1) {};
                    \node[fill=black!80] (h10) at (3,0) {};
					\end{scope}

					\begin{scope}[>={latex[black]}, every node/.style={fill=white,circle},
					every edge/.style={draw=black,very thick}]
					
					\path [-] (h0) edge (h1);
					\path [-] (h1) edge (h2);
					\path [-] (h1) edge (h3);
					
                    \path [-] (h0) edge (h4);
					\path [-] (h4) edge (h5);
					\path [-] (h4) edge (h6);

					\path [-] (h12) edge (h7);
					\path [-] (h7) edge (h8);
					\path [-] (h7) edge (h9);
					
                    \path [-] (h0) edge (h10);
					\path [-] (h10) edge (h11);
					\path [-] (h10) edge (h12);
					\end{scope}
					\end{tikzpicture}}
		\end{center}	\end{minipage}
			
	\caption[Tripods]{Some examples of weighted tripods. Secondary vertices are small bullets, primary vertices are large open circles and contain a number indicating the weight; the root has a grey filling. }\label{tripod}
\end{figure}

More precisely, a pruned tree is mapped into a tripod tree in the following way (see Fig.~\ref{stratus}). Recall that first all vertices within a region of the pruned tree collapse to form a primary vertex; its weight is the number of unmarked leaves in that region. The region containing the root in the pruned tree is the root of the tripod tree. A region pair arising at a ternary branching in the pruned tree corresponds to the two children of a secondary vertex in the tripod tree, with the parent of this vertex   corresponding to the region parental  to the region pair. The following definition formalises this mapping.

\begin{definition}[Stratification]\label{def:stratmap} The \emph{stratification map} $s:\Xi^{\pA}\to \Upsilon_\Delta$ is defined as follows. For $\bal=\rootben$, set $s(\bal)=\Delta$. For $\bal\neq\rootben$, set $s(\bal)=(\tau,m)$, where $\tau$ has vertex set $V(\tau)\defeq V^1(\tau)\cup V^2(\tau)$,
 $$V^1(\tau):=\{R:\, \textrm{$R$ is a region of $\bal$}\},\quad V^2(\tau):=\{v\in V(\bal) : \deg(v)=3\},$$ 
and, for any $R\in V^1(\tau)$ and $v\in V^2(\tau)$:
\begin{itemize}
\item $v$ is the child of $R$ in $\tau$ if and only if $v\in R$ in $\bal$,
\item $R$ is a child of $v$ in $\tau$ if and only if $\rho(R)$ is a child of~$v$ in $\bal$,
\item $m(R)$ is the number of unmarked leaves in $R$.
\end{itemize}
In particular, if $\bal\neq \rootben$ has no ternary branchings and $n$ unmarked leaves, then $s(\bal)=\sroot{n}$. 
\end{definition}

\begin{figure}[t]
\begin{minipage}{0.3\textwidth}
			\begin{center}
				\scalebox{.5}{\begin{tikzpicture}
					\begin{scope}[every node/.style={regular polygon,regular polygon sides=7,draw}, minimum size=7pt]
					\node (h0) at (4,0) {};
					
					\node[fill=black!80](h1) at (3,1) {};
					\node(h2) at (5,1) {};
					
					\node[fill=black!80] (h3) at (2,2) {};
					\node (h4) at (2.75,2) {};
					\node (h5) at (3.5,2) {};
					\node (h6) at (4.5,2) {};
					\node[fill=black!80] (h7) at (5.5,2) {};

					\node (h31) at (1.25,3) {};
					\node (h32) at (2,3) {};
					\node[fill=black!80] (h33) at (2.75,3) {};
					
					\node (h9) at (4.75,3) {};
					\node (h10) at (5.5,3) {};
					\node (h11) at (6.25,3) {};
					
					\node (h13) at (5.75,4) {};
					\node (h12) at (6.75,4) {};
					
					\node (h331) at (2.25,4) {};
					\node (h332) at (2.75,4) {};
					\node (h333) at (3.25,4) {};
					\end{scope}

					\begin{scope}[>={latex[black]}, every node/.style={fill=white,circle},
					every edge/.style={draw=black,very thick}]
					
					\path [-] (h0) edge (h1);
					\path [-] (h0) edge (h2);
					
					\path [-] (h1) edge (h3);
					\path [-] (h1) edge (h4);
					\path [-] (h1) edge (h5);
					
					\path [-] (h3) edge (h31);
					\path [-] (h3) edge (h32);
					\path [-] (h3) edge (h33);

					\path [-] (h2) edge (h6);
					\path [-] (h2) edge (h7);

					\path [-] (h7) edge (h9);
					\path [-] (h7) edge (h10);
					\path [-] (h7) edge (h11);
					
					\path [-] (h11) edge (h12);
					\path [-] (h11) edge (h13);
					
                    \path [-] (h33) edge (h331);
					\path [-] (h33) edge (h332);
					\path [-] (h33) edge (h333);

					\end{scope}

        \begin{pgfonlayer}{background}
        \highlight{5mm}{cyan}{(4.75,3) -- (5.5,2) -- (5,1) -- (4.5,2) -- (5,1) -- (4,0) -- (3,1) -- (2,2) -- (1.25,3)}
        \highlight{5mm}{green}{(2.75,3) -- (2.25,4)}
        \highlight{5mm}{blue}{(3.25,3.9) -- (3.25,4.1)}
        \highlight{5mm}{red}{(2.75,3.9) -- (2.75,4.1)}
        \highlight{5mm}{orange}{(2.75,1.9)--(2.75,2.1)}
        \highlight{5mm}{purple}{(3.5,1.9)--(3.5,2.1)}
        \highlight{5mm}{magenta}{(2,2.9)--(2,3.1)} 
        \highlight{5mm}{brown}{(5.5,2.9)--(5.5,3.1)} 
        \highlight{5mm}{violet}{(5.75,4) -- (6.25,3) --(6.75,4) }
        
        \end{pgfonlayer}
                                    \end{tikzpicture}}
		\end{center}	\end{minipage} \begin{minipage}{0.3\textwidth}
			\begin{center}
				\scalebox{.5}{\begin{tikzpicture}
				
				    \begin{scope}[every node/.style={circle,thick,draw}, minimum size=1pt]
					\node[fill=black!80, scale=0.5] (t0) at (3.1,1.5) {};
					\node[fill=black!80, scale=0.5] (t1) at (2.4,2.5) {};
					\node[fill=black!80, scale=0.5] (t2) at (2.95,3.5) {};
                    \node[fill=black!80, scale=0.5] (t3) at (5.9,2.5) {};
					\end{scope}
				
				   \begin{scope}[>={latex[black]}, every node/.style={fill=white,circle},
					every edge/.style={draw=black,very thick}]
					\path [-] (3,1.19) edge (t0);
					\path [-] (t0) edge (2.75,1.75);
					\path [-] (t0) edge (3.50,1.75);
					\path [-] (2.1,2.1) edge (t1);
					\path [-] (t1) edge (2,2.75);
                    \path [-] (t1) edge (2.75,2.85);
                    
                    \path [-] (2.85,3.15) edge (t2);
                    
                    \path [-] (t2) edge (2.75,3.75);
                     \path [-] (t2) edge (3.25,3.75);
                     \path [-] (5.6,2.1) edge (t3);
                    
                    \path [-] (t3) edge (5.5,2.75);
                     \path [-] (t3) edge (6.25,2.85);
                     
					\end{scope}

							 \begin{pgfonlayer}{background}
        \highlight{3mm}{cyan}{(4.75,3) -- (5.5,2) -- (5,1) -- (4.5,2) -- (5,1) -- (4,0) -- (3,1) -- (2,2) -- (1.25,3)}
        \highlight{3mm}{green}{(2.75,3) -- (2.25,4)}
        \highlight{3mm}{blue}{(3.25,3.9) -- (3.25,4.1)}
        \highlight{3mm}{red}{(2.75,3.9) -- (2.75,4.1)}
        \highlight{3mm}{orange}{(2.75,1.9)--(2.75,2.1)}
        \highlight{3mm}{purple}{(3.5,1.9)--(3.5,2.1)}
        \highlight{3mm}{magenta}{(2,2.9)--(2,3.1)} 
        \highlight{3mm}{brown}{(5.5,2.9)--(5.5,3.1)} 
        \highlight{3mm}{violet}{(5.75,4) -- (6.25,3) --(6.75,4) }
        
        \end{pgfonlayer}		
					\end{tikzpicture}}
		\end{center}	\end{minipage}\begin{minipage}{0.3\textwidth}
			\begin{center}
				\scalebox{.5}{\begin{tikzpicture}
					\begin{scope}[every node/.style={circle,thick,draw}, minimum size=20pt]
					\node[fill=cyan!30, line width=1mm] (h0) at (4,0) {$3$};

					\node[fill=orange!30] (h2) at (3,2) {$1$};
					\node[fill=purple!30] (h3) at (5,2) {$1$};

					\node[fill=brown!30] (h5) at (6,1) {$1$};
					\node[fill=violet!30] (h6) at (6,-1) {$2$};
					
					\node[fill=green!30] (h8) at (2,1) {$1$};
					\node[fill=magenta!30] (h9) at (2,-1) {$1$};
					
                    \node[fill=blue!30] (h11) at (0,2) {$1$};
					\node[fill=red!30] (h12) at (0,0) {$1$};

				\end{scope}
					\begin{scope}[every node/.style={circle,thick,draw}, minimum size=1pt]
                    \node[fill=black!80]  (h1) at (4,1) {};
                    \node[fill=black!80](h4) at (5,0) {};
                    \node[fill=black!80](h7) at (3,0) {};
                    \node[fill=black!80](h10) at (1,1) {};
                    
					\end{scope}

					\begin{scope}[>={latex[black]}, every node/.style={fill=white,circle},
					every edge/.style={draw=black,very thick}]
					
					\path [-] (h0) edge (h1);
					\path [-] (h1) edge (h2);
					\path [-] (h1) edge (h3);
					
                    \path [-] (h0) edge (h4);
					\path [-] (h4) edge (h5);
					\path [-] (h4) edge (h6);

					\path [-] (h0) edge (h7);
					\path [-] (h7) edge (h8);
					\path [-] (h7) edge (h9);
					
                     \path [-] (h10) edge (h8);
					   \path [-] (h10) edge (h11);
					      \path [-] (h10) edge (h12);
					\end{scope}

					\end{tikzpicture}}
		\end{center}	\end{minipage}
			
	\caption[Stratification]{A pruned tree and its regions (left), an illustration of the stratification procedure (middle), and the stratification (right); the root of the tripod tree has a bold outline.}	\label{stratus}
\end{figure}

To determine the root type of $\bal\in\Xi^{\pA}$ via the stratification $s(\bal)$, we proceed in two steps. First, we assign a pre-type $c_R\in\{0,1\}$ to each region $R$. For a given reduced leaf-type configuration of~$\bal$, this pre-type is defined as the root type of~$R$ when types are propagated within a region as in the non-intercative case, and using the rule that the type of a vertex $v$ with $\deg(v)=1$ in $R$ (i.e. $\deg(v)=3$ in $\bal$) is the type of $\chil{v}$. Clearly the pre-types and the root types under the type-propagation for the entire tree can differ. In any case, this procedure yields an assignment of types to the primary vertices in $s(\bal)$. Next, we associate to this assignment  an \emph{output type} that takes into account the connections between the regions. 

\begin{definition}[Output type] Let $\tau\in\Xi^\trip$. A \emph{primary vertex-type configuration} of $\tau$ is a vector $c\defeq(c_v)_{v\in V^1(\tau)}\in\{0,1\}^{V^1(\tau)}$. The \emph{output type} in $\tau$ under $c$ is the type $\tf(c,\tau)\in\{0,1\}$ defined recursively as follows.
\begin{itemize}
 \item If $\tau$ consists only of the root $\rho(\tau)$, then $\tf(c,\tau)=c_{\rho(\tau)}$.
 \item If $\tau$ has at least one secondary vertex, 
 $$\tf(c,\tau)=c_{\rho(\tau)} \prod_{v\,\text{child of }\rho(\tau) } 
\max\left\{\tf(c,\tau_{w}):\, w\, \text{ is child of }v\right\},$$
where $\tau_{w}$  is the subtree of $\tau$ rooted in $w$.
\end{itemize}
\end{definition}

In Lemma~\ref{out} in Section~\ref{sec:proofssASG}, we prove that, under any reduced leaf-type configuration, the root type of $\bal \in  \Xi^{\pA}$ with $\bal\neq\rootben$  equals the output type in $s(\bal)$, if every primary vertex $R$ is assigned type $1$ when all unmarked leaves in $R$ have type $1$. 

\smallskip

For $z\in[0,1]$, $\Ts\defeq (\tau,m)\in\Upsilon$ with vertex set~$V(\tau)$, and $v\in V^1(\tau)$, let $\Cs_v(z)$ be a Bernoulli random variable with parameter $z^{m(v)}$; we understand $\Cs(z)=(\Cs_v(z))_{v\in V^1(\tau)}$ as a primary-vertex configuration. Define $\Hs(\Ts,z)$ to be the probability that the output type in $\tau$ under $\Cs(z)$ is $1$. Moreover, we set $\Hs(\Delta,z)\defeq 0$ for all $z\in[0,1]$. Clearly, this is the tripod-tree analogue to the function~$H$ of the previous subsection. Indeed, it will be a consequence of Lemma~\ref{out} that for $\bal \in \Xi^{\pA}$ and $z\in[0,1]$, 
\begin{equation}\label{root=winnertype}
H(\bal,z) = \Hs(s(\bal),z).
\end{equation}
Now, $\Tp=(\Tp_t)_{t\geq 0}$ with $\Tp_t\defeq s(\bar{a}_t)$ is called the \emph{sASG process}. Thanks to \eqref{root=winnertype} and  Corollary \ref{dualpASG},  $\Tp$ satisfies the following duality with the mutation-selection equation.
\begin{theorem}[Duality sASG]\label{thm:duality}
	The sASG process~$\mathscr{T}$ and the solution~$(y(t;y_0))_{t\geq 0}$ of~\eqref{eq:dlimitdiffeq} satisfy the duality relation 
	\begin{equation}
	\Hs(\Ts,y(t;y_0))=\E_{\Ts}[\Hs(\mathscr{T}_t,y_0)] \quad \text{for} \ y_0\in [0,1],\ \Ts\in \Upsilon_\Delta, \  t\geq 0. \label{eq:dualityternarytree}
	\end{equation}
	In particular, for~$t\geq 0$ and~$y_0\in [0,1]$, we obtain the stochastic representation
	\begin{equation} \label{eq:dualityternarytreesimple}
	y(t;y_0)=\E_{\ssroot{1}}[\Hs(\mathscr{T}_t,y_0)].
	\end{equation}
\end{theorem}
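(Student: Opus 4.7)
The plan is to derive Theorem~\ref{thm:duality} by chaining the three previously established results: Theorem~\ref{sec:mainresults:thm:dualityASG} (the duality between the solution of~\eqref{eq:dlimitdiffeq} and the embedded ASG process), Theorem~\ref{thm:AsA} (the pathwise identity $H(\asg,y)=\Hs(\bar\asg,y)$ for any stratification $\bar\asg$ of $\asg$), and Theorem~\ref{thm:stratifiedASGpro} (the distributional identity $S(\As_t)\stackrel{(d)}{=}\Te(t)$ provided the initial states match). Taken together, these three results bridge the deterministic dynamics of $y(\cdot;y_0)$ and the Markov dynamics of $\Te$ through the intermediate eASG.

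The first step is a realizability claim: for every $\Ts\in\Upsilon$ there is an eASG $\asg_\Ts\in\Xi^\star$, equipped with the degenerate initial time labelling $\varsigma_0\equiv 0$, such that $S(\asg_\Ts,\varsigma_0)=\Ts$. I would construct $\asg_\Ts$ by induction on the structure of $\tau$. For $\tau=\rootsin$ of weight $n\geq 1$, take a chain of $n-1$ binary branchings without mutations, so that the total pruning has $n$ unmarked leaves forming a single region and the stratification is $\sroot{n}$. For a leaf of weight $0$, place a single $\times$-mark as the left child of a ternary branching; by the remark following Definition~\ref{PR}, this mark is not prunable, so the corresponding region of $\pi(\asg_\Ts)$ contains no unmarked leaves. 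For a non-trivial $\tau$ whose root region has a ternary branching at some vertex, glue the eASGs realizing the three children of $\tau$ via a ternary branching, respecting the left/middle/right assignment and the prescribed tie-breaking order.

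Granted realizability, the theorem follows from the chain
\begin{align*}
\Hs(\Ts,y(t;y_0))
&=H(\asg_\Ts,y(t;y_0)) && \text{(Theorem~\ref{thm:AsA})}\\
&=\E_{\asg_\Ts}[H(a_t,y_0)] && \text{(Theorem~\ref{sec:mainresults:thm:dualityASG})}\\
&=\E_{\asg_\Ts}[\Hs(S(\As_t),y_0)] && \text{(Theorem~\ref{thm:AsA} applied pathwise)}\\
&=\E_{\Ts}[\Hs(\Te(t),y_0)] && \text{(Theorem~\ref{thm:stratifiedASGpro}),}
\end{align*}
together with the convention $\Hs(\Delta,\cdot)\equiv 0$, which handles the cemetery case. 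Specialising to $\Ts=\sroot{1}$, which corresponds to $\asg_\Ts=\rootsin$ and $\Hs(\sroot{1},z)=z$, yields~\eqref{eq:dualityternarytreesimple}.

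The main technical obstacle I expect is the realizability step: while the construction sketched above is natural, verifying rigorously that the tie-breaking procedure prescribed for $S$ returns exactly the target ternary tree $\tau$ requires a careful but routine induction combining the pruning operations of Definition~\ref{PR} and the stratification rules of Definition~\ref{SASG}. An alternative route that bypasses realizability is to verify the infinitesimal duality $F(y)\,\partial_y\Hs(\Ts,y)=(L_\Te\Hs(\cdot,y))(\Ts)$, with $L_\Te$ the generator from Definition~\ref{def:stratifiedASGprocess}, and then invoke the standard generator criterion for duality; this reduces the claim to a term-by-term comparison between the four transitions of $\Te$ and the four components of $F$ in~\eqref{eq:dlimitdiffeq}, which is elementary but more computational than the pathwise route above.
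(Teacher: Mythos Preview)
Your proposal is correct and follows essentially the same route as the paper: the paper's proof is precisely the chain $\Hs(\Ts,y(t;y_0))=H(\asg,y(t;y_0))=\E_\asg[H(a_t,y_0)]=\E_\asg[\Hs(S(\As_t),y_0)]=\E_\Ts[\Hs(\Te(t),y_0)]$ obtained by combining Theorems~\ref{thm:AsA}, \ref{sec:mainresults:thm:dualityASG}, and \ref{thm:stratifiedASGpro}, after choosing some $\asg\in\Xi^\star$ with $S(\asg)=\Ts$. The paper simply asserts the existence of such an $\asg$ without comment, whereas you spell out the inductive construction; your alternative generator route is not used in the paper for this theorem (though it is the method behind Theorem~\ref{sec:mainresults:thm:dualityASG}).
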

Theorem~\ref{thm:duality} is proved in Section~\ref{sec:proofssASG}.

\smallskip
 
The sASG process is defined on the basis of the pASG process. But the stratification mapping was in fact chosen so as to make $\Tp$ a Markov process. The following operators provide the corresponding transitions.
\begin{definition}\label{def:sASGoperations}
For $\Ts\in\Upsilon$ and $v\in V^1(\tau)$, define $\Ts^{\nwedge}_{v}$, $\Ts^{\pitch}_{v}$, $\Ts^{\times}_{v},\Ts^{\circ}_{v}\in\Upsilon_\Delta$ as follows.
\begin{itemize}
 \item $\Ts^{\nwedge}_{v}$ is obtained from~$\Ts$ by increasing the weight of~$v$ by $1$; the tripod tree and all other weights remain as in~$\Ts$.
 \item $\Ts^{\pitch}_{v}$ arises from~$\Ts$ by adding a secondary vertex $\bar{v}$ as a child of~$v$; and adding the primary vertices $\chim{v}$ and $\chir{v}$ as  children of~$\bar{v}$. $\chim{v}$ and $\chir{v}$ obtain weight~$1$. The remaining tripod tree and weights remain unchanged. (Here, $\chim{v},\chir{v}$ are not meant to indicate orientation of the tree; they are rather convenient notation for  these vertices.)
 \item $\Ts^{\times}_{v}$ arises from~$\Ts=(\tau,m)$ as follows. If $m(v)>1$, or if $v$ is not a leaf, or if $v$ is the root, then $\Ts^{\times}_{v}=(\tau,\tilde{m})$, where for $z\in V(\tau)\setminus\{v\}$, $\tilde{m}(z)=m(z)$, and $\tilde{m}(v)=m(v)-1$. If $v$ is a leaf that is not the root and $m(v)=1$, let~$w$ be the closest ancestor of~$v$ that is the root of $\Ts$ or a child of a primary vertex with positive weight; remove from~$\Ts$ the subtree rooted in~$w$ to obtain~$\Ts^\times_v$.
 \item $\Ts^{\circ}_{v}$ is set to $\Delta$ if $v$ is the root of $\Ts$. If~$v$ is not the root, let $w$ and $g_v$ be the sibling and the grandparent of $v$, respectively. 
 Replace  the subtree rooted in $g_v$ by the subtree rooted in~$w$; add the weight of~$g_v$ to the weight of~$w$.
\end{itemize}
\end{definition}

In Section~\ref{sec:proofssASG} we show that $\Tp$ is a continuous-time Markov chain with values in $\Upsilon_\Delta$
and transition rates
	\begin{align*}
	q_{\Te}^{}(\Ts,\Ts^{\nwedge}_{v})\defeq s\,m(v),\quad  q_{\Te}^{}(\Ts,\Ts^{\pitch}_{v})\defeq\gamma\, m(v),\quad q_{\Te}^{}(\Ts,\Ts^{\times}_{v})\defeq u\nu_1\, m(v),\quad q_{\Te}^{}(\Ts,\Ts^{\circ}_{v})\defeq u\nu_0\, m(v),
	\end{align*}
	for~$\Ts=(\tau,m)\in \Upsilon$ and~$v\in V^1(\tau)$. The states~$\sroot{0}$ and~$\Delta$ are absorbing. 	

\subsection{Applications I - long-term type frequency}\label{sec:mainresults:subsec:application}
The long-term behaviour of the forward process can be described in terms of the backward process using our constructions. The basic idea is to take $t\to\infty$ in the duality relation \eqref{eq:dualityternarytree}. To this end, we consider the long-term behaviour of $\Hs(\Tp_t,y_0)$ as $t\to\infty$. The sASG process $\Tp$ has two traps $\sroot{0}$ and $\Delta$, where the function $\Hs(\cdot,y_0)$ takes the values $1$ and $0$, respectively, irrespective of the value of $y_0$. Now, define
\[T_{\ssroot{0}}\defeq \inf\big\{r>0:\mathscr{T}_r=\sroot{0}\big\},\quad T_{\Delta}\defeq \inf\{r>0:\mathscr{T}_r=\Delta\}\quad \text{and}\quad T_{\mathrm{abs}}\defeq\min \{T_{\ssroot{0}},T_{\Delta}\}.\]
If $T_{\mathrm{abs}}<\infty$ almost surely, then for any $y_0\in[0,1]$, $\Hs(\Tp_t,y_0)$ converges as $t\to\infty$ to a Bernoulli random variable with parameter $\Pb(T_{\sroot{0}}<\infty)$. In this case, the duality \eqref{eq:dualityternarytree} also implies that $\Pb(T_{\sroot{0}}<\infty)$ is the unique equilibrium of \eqref{eq:dlimitdiffeq}
and is stable. The analysis is more involved if $T_{\mathrm{abs}}=\infty$ has positive probability.
\begin{proposition}\label{prop:treegrowsinfty}
Define the total weight of $\Ts=(\tau,m)\in\Upsilon$ as $M(\Ts)\defeq\sum_{v\in V^1(\tau)} m (v)$. On~$\{T_{\mathrm{abs}}=\infty\}$, we have~$\lim_{r\to\infty} M(\Te_r)=\infty$.
\end{proposition}
Proposition~\ref{prop:treegrowsinfty} is proved in Section~\ref{sec:applicationIproofs}.
\smallskip

If $\nu_0=0$ and $M(\Te_t)\to\infty$, it is reasonable to expect the following behaviour of $\Hs(\Te_t,y_0)$. If $s>u$, the weight of a primary vertex that is never pruned (for example the root) is a birth-death process with birth rate $s$ and death rate $u$ and hence converges to $0$ or $\infty$ almost surely. In the first case, the probability that such a vertex gets the unfit type converges to $1$ for all $y_0\in[0,1]$; in the second case, for all $y_0\in[0,1)$, this probability is~$0$. So, we may expect that $\Hs(\Tp_t,y_0)$ has the same limit for all $y_0\in[0,1)$. If $s<u$ the size of a given primary vertex will tend to $0$ almost surely, but if $\gamma$ is sufficiently large, the number of vertices with positive weight can grow to infinity; so the limit of $\Hs(\Tp_t,y_0)$ can depend on $y_0$ in a more subtle way. 
\smallskip

Recall that $y_\infty(y_0)=\lim_{t\to\infty}y(t;y_0)$ and that $\ymin$ and $\ymax$ denote the smallest and largest equilibrium of~\eqref{eq:dlimitdiffeq}. The long-term behaviour of $\Hs(\Tp_t,y_0)$ and its connection with the mutation-selection equation are given in the next theorem, which is the main result of this section.
\begin{theorem}[Stochastic representation of equilibria]\label{thm:representationequilibrium}
For any~$y_0\in [0,1]$, $\Hs(\mathscr{T}_t,y_0)$ converges almost surely as $t\to\infty$ to a random variable $\Hs_{\infty}(y_0)\in[0,1]$. In addition, if~$y_0$ is not an unstable equilibrium of \eqref{eq:dlimitdiffeq} located in $(0,1)$ and $\Te_0=\ssroot{1}$, then~$\Hs_{\infty}(y_0)$ has a Bernoulli distribution with parameter~$y_{\infty}^{}(y_0)$. 
Moreover, for any~$y_0\in[0,1]$, we have
	\begin{equation}\label{eq:yinftyw1}
		y_{\infty}^{}(y_0)=\Pb_{\ssroot{1}}\big(T_{\ssroot{0}}<\infty\big)+\E_{\ssroot{1}}\big[\1_{\{T_{\mathrm{abs}}=\infty \}} \Hs_{\infty}(y_0)\big].\end{equation}
In particular, $\ymin=\Pb_{\ssroot{1}}(T_{\ssroot{0}}<\infty)$ and $\ymax=\Pb_{\ssroot{1}}(T_{\Delta}=\infty)$.
\end{theorem}
Note that in particular $\P_{\ssroot{1}}(T_{\mathrm{abs}}<\infty)=1$ if and only if $\ymin=\ymax$. Let $\Attr(\bar{y})\defeq\big\{y_0\in [0,1]: \lim_{t\to\infty}y(t;y_0)=\bar{y}\big\}$ denote the domain of attraction of an equilibrium $\bar{y}$ of \eqref{eq:dlimitdiffeq}. The next result provides a more precise description of $\Hs_\infty(y_0)$. 
\begin{coro}\label{coro:refinebernoulli} Assume $\Te_0=\sroot{1}$ and $y_0 \in (0,1)$ is not an unstable equilibrium of~\eqref{eq:dlimitdiffeq}. Then, almost surely,
	$$\Hs_{\infty}(y_0)=\begin{cases}
		\1_{\{T_{\ssroot{0}}<\infty\}},&\text{if }y_0\in\Attr(\ymin),\\	\1_{\{T_{\Delta}=\infty\}},&\text{if }y_0\in\Attr(\ymax).\\	\end{cases}$$
\end{coro}
Theorem~\ref{thm:representationequilibrium} and Corollary~\ref{coro:refinebernoulli} are proved in Section~\ref{sec:applicationIproofs}.
Corollary~\ref{coro:refinebernoulli} tells us that when $\Tp$ is not trapped, $\Hs_\infty(y_0)$ is $0$ for $y_0$ close enough to $0$ (for $y_0\in\Attr(\ymin)$), and $1$ for $y_0$ close enough to $1$ (for $y_0\in\Attr(\ymax)$). The critical value associated to this dichotomy is described in the next result.

\begin{proposition}\label{prop:ycritical}
	Assume that~$\P_{\ssroot{1}}(T_{\mathrm{abs}}=\infty)>0$. Let $$y_c\defeq\inf\big\{y_0\in[0,1]:\, \P_{\ssroot{1}}\big(\Hs_{\infty}(y_0)=1\mid T_{\mathrm{abs}}=\infty\big)=1\big\}.$$ Then $y_c \in [\ymin,\ymax]$ is an equilibrium and $y_c=\sup\big\{y_0\in[0,1]:\, \P_{\ssroot{1}}\big(\Hs_{\infty}(y_0)=0\mid T_{\mathrm{abs}}=\infty\big)=1\big\}.$ There are no equilibria in $[0,1]$ other than $\ymin,y_c,\ymax$.
\end{proposition}
Proposition~~\ref{prop:ycritical} is also proved in Section~\ref{sec:applicationIproofs}.

\smallskip

Finally, the results of this subsection connect the genealogical picture with the bifurcation structure described in Section~\ref{sec:mainresults:sub:detmodel} in the case~$\nu_0^{}=0$. Recall the parameter domains in~\eqref{eq:parameterregions}.
\begin{itemize}
	\item[$\Theta_1^{}$:] Here, $y_{\infty}(y_0)=1$ for all~$y_0\in [0,1]$. The sASG process absorbs almost surely in~$\sroot{0}$ and its long-term output type is $1$ almost surely, regardless of~$y_0$, see Theorem~\ref{thm:representationequilibrium}.
	
	\item[$\Theta_2^a$:] One has $y_{\infty}(y_0)\in [0,1)$ unless~$y_0=1$. The sASG process either absorbs in $\sroot{0}$ or its total weight tends to~$\infty$, by Proposition~\ref{prop:treegrowsinfty}. In the latter case, the long-term output type is $0$ (resp. $1$) for all $y_0\in[0,1)$ (resp. $y_0=1$) by Corollary~\ref{coro:refinebernoulli}.
	
	\item[$\Theta_2^b$:] One has $y_{\infty}(y_0)=\bar{y}_-$ for~$y_0\in [0,\bar{y}_-]$ and~$y_{\infty}(y_0)=1$ for~$y_0\in (\bar{y}_-,1]$. If the sASG process does not absorb, its total weight tends to~$\infty$ by Proposition~\ref{prop:treegrowsinfty}. In this case, the long-term output type depends on the initial type frequency: it is $0$ if~$y_0\leq\bar{y}_-$, and it is 1 if~$y_0>\bar{y}_-$.
	\item[$\Theta_3^{}$:] There are three equilibria in~$[0,1]$. Again, if the sASG process does not absorb, the total weight tends to~$\infty$, in which case the long-term output type depends on the initial type frequency: it is $0$ if~$y_0<\bar{y}_+$; it is 1 if~$y_0>\bar{y}_+$.
\end{itemize}

\begin{remark}[Connection to recursive tree processes and endogeny]
	Let us elaborate on a connection to~\citep{MSS2018} and endogeny. A (rather simplified) description of a \emph{recursive tree process} (RTP) is as follows. Consider a tree that has a randomly chosen function attached  to each internal vertex. The leaves of this tree are assigned types. Given the types of the children, the random function determines the type of the parent, and in this way the types propagate from the leaves, through the tree, to the root. 
	If the type distribution in each generation satisfies a consistency condition induced by a certain \emph{recursive distributional equation (RDE)} (which prescribes the way the functions at the internal vertices are chosen), one can apply Kolmogorov's extension theorem to extend this type propagation in some precise sense to infinite trees (e.g.~\citep[around Eq.~(2.7)]{mach2018a}); leading to an RTP associated to the solution of the RDE. This RTP is said to be \emph{endogenous} if the type at the root of the infinite tree if measurable with respect to the $\sigma$-field of functions at the internal vertices. Loosely speaking, the RTP is endogenous if the effect of the typing at the leaves for the root type disappears in the infinite tree. In our context, the leaves are independently typed according to a Bernoulli distribution with parameter~$y_0$. So if then the state at the root is measurable with respect to the $\sigma$-algebra generated by the functions attached to the internal vertices, then the RTP is said to be endogenous for~$y_0$ (see~\citep{aldous2005,mach2018a} for more details). 
	
	\smallskip

	It follows from~\citep[Prop.~1.16, Prop.~1.17]{MSS2018} that for $y_0$ an equilibrium of~\eqref{eq:dlimitdiffeq}, $\Hs_{\infty}(y_0)$ has a Bernoulli distribution if and only if the RTP corresponding to~$y_0$ is endogenous. In particular, the first part of Theorem~\ref{thm:representationequilibrium} implies that in our setup, the RTP is endogenous for all equilibria that are not unstable. Moreover, for $y_0\in \{\ymin,\ymax\}$, Corollary~\ref{coro:refinebernoulli} makes the endogeny of the underlying recursive tree process corresponding to $y_0$ explicit. An alternative way to recover the convergence statement in Theorem~\ref{thm:representationequilibrium} is via~\citep[Prop.~1.20, see also Sect.~2.1]{MSS2018} (alternatively,~\citep[Lem.~15]{aldous2005}).
	
	\smallskip
	
	For $s=\nu_0=0$ and $u=1$, the first part of Theorem~\ref{thm:representationequilibrium} can be also recovered from~\citep[Thm.~1.18]{MSS2018}. Moreover, if~$y_0$ is an unstable equilibrium, \citet[Lem.~1.19]{MSS2018} determine the first and second moments of~$\Hs_{\infty}(y_0)$ (to avoid confusion, we remark here that they in fact consider $1-\Hs_{\infty}(y_0)$). This allows them to infer that $\Hs_{\infty}(y_0)$ is then not Bernoulli distributed. Furthermore, they complement the result by numerical evaluations of the distribution function~\citep[Fig.~2]{MSS2018}. 
\end{remark}

\subsection{Applications II --- ancestral type distribution}\label{sec:mainresults:subsec:ancestraltype}
So far we have been concerned with a randomly chosen individual at present and determined its type either via the eASG, pASG, or sASG. In this section, we change perspective and look at the type of the \emph{ancestor} of the chosen individual at time~$r$ before the present, where we now return to the notion of ancestry  in the genealogical rather than the tree sense. By construction, the ancestor of the sampled individual together with its type can be extracted from the eASG after assigning types to its leaves, as inherent in Figs.~\ref{fig:peckingordersel} and \ref{fig:peckingorderint} and formalised as follows.

\begin{definition}[Ancestral leaf and ancestral type]
 Let $\asg\in\Xi$ and $c\in\{0,1\}^{L(\asg)}$ be a leaf-type configuration for $\asg$. Let $\cpr\in\{0,1\}^{V(\asg)}$ be the vertex-type propagation of $c$ (see Definition~\ref{typro}). Define the vector $(\lambda_v^c(\asg))_{v\in V(\asg)}\in{L(\asg)}^{V(\asg)}$ recursively as follows. For $\ell\in L(\asg)$, set $\la^c_{\ell}(\asg)=\ell$. For $v\in V(\asg)$ 
 \begin{itemize}
 	\item with $\deg(v)=1$ and child~$\chim{v}$, set $\la^c_v(\asg)=\la^c_{\chim{v}}(\asg)$. 
  \item with $\deg(v)=2$, set $\la^c_v(\asg)=\la^c_{\chir{v}}(\asg)$ if $\cpv{\chir{v}}=0$, otherwise set $\la^c_v(\asg)=\la^c_{\chil{v}}(\asg)$.
  \item with $\deg(v)=3$, , set $\la^c_v(\asg)=\la^c_{\chir{v}}(\asg)$ if $\cpv{\chim{v}}=\cpv{\chir{v}}=0$, otherwise set $\la^c_v(\asg)=\la^c_{\chil{v}}(\asg)$.
\end{itemize}
For any $v\in V(\alpha)$, we refer to the leaf $\lambda_v^c(\alpha)$ as the \emph{ancestral leaf} of $v$ under $c$, and to $J^c_v(\asg)\defeq c^{}_{\la^c_v(\asg)}$ as the \emph{ancestral type} of $v$ under $c$.
\end{definition}

Let $\asg\in\Xi$, $y_0\in[0,1]$, and $C(y_0)= (C_\ell(y_0))_{\ell\in L{(\asg)}}$ be a random leaf-type configuration consisting of iid Bernoulli random variables with success parameter~$y_0$. Define $H_\star(\asg,y_0)$ to be the probability that the ancestral type of the root of~$\asg$ is $1$ under $C(y_0)$.
\smallskip

Consider the eASG process $a=(a_r)_{r\geq 0}$ starting in $\rootsin$. For $r\geq 0$ and $y_0\in[0,1]$, let $C(r,y_0)$ be a random leaf-type configuration of $a_r$ consisting of iid Bernoulli  variables with parameter $y_0$, independent of $a_r$ ($y_0$ then represents the proportion of type-$1$ individuals at time $r$ before the present). For $y_0\in[0,1]$ and $r\geq 0$, we define
$$g_{r}(y_0)\defeq\Pb_{\rootsin} \big (J_{\rho(\brea_r)}^{C(r,y_0)}(\brea_r)=1 \big )=\E_{\rootsin}[H_\star(\brea_r,y_0)].$$
We refer to $g_r(y_0)$ as \emph{ancestral type distribution} at (backward) time $r$ with unfit frequency $y_0$. The \emph{long-term ancestral type distribution} at frequency $y_0\in[0,1]$ is defined
as $g_{\infty}(y_0)\defeq \lim_{r\to\infty} g_{r}(y_0)$; we show in Corollary~\ref{coro:ancestraltypedistribution} that this limit always exists.

\smallskip


It is more involved to find a useful representation for the ancestral type distribution than for the type distribution, because  the former  requires to identify the parental branch at every branching event. Moreover, some ancestral lines need to be traced back beyond the first mutation. In the non-interactive case (i.e. $\gamma=0$), this can be done via the pruned lookdown ancestral selection graph (pLD-ASG), which is a variant of the ASG in which the lines have an additional labelling that helps to prune the lines in an appropriate way (see~\citep{BCH17,Cordero2017590}; or~\citep{Lenz2015,Baake2018}). We now provide an alternative approach building on the eASG in the interactive case.
\smallskip

In the remainder of this section, we assume $\nu_0=0$. We call the \emph{immune line} of $\asg\in \Xi$ the path connecting the root of $\asg$ to its leftmost leaf (i.e. the path that is continuing to every branching event). Since~$\nu_0=0$, the ancestral type of the root is $1$ if and only if the leftmost leaf of $\asg$ is the ancestral leaf of the root and is unfit. Moreover, by construction, the leftmost leaf is the ancestral leaf of the root if and only if: (i) for any vertex $v$ with $\deg(v)=2$ on the immune line, $\chir{v}$ has the unfit type, and (ii) for any vertex $v$ with $\deg(v)=3$ on the immune line,  $\chim{v}$ or $\chir{v}$ has the unfit type. To check (i) we keep track, for any vertex $v$ with $\deg(v)=2$ on the immune line, of the eASG rooted in $\chir{v}$. Similarly, to check (ii) we keep track, for any vertex $v$ with $\deg(v)=3$ on the immune line, of the eASGs rooted in $\chim{v}$ and $\chir{v}$. The previous discussion leads to the following definition. 

\begin{definition}[eASG forest]\label{def:foreststratifiedASG}
	Let $\asg\in\Xi$. The \emph{eASG forest} of $\alpha$ is the collection 
	$$\Fs(\al)\defeq\Big( (\alpha_{\chir{v}_i})_{i=1}^{N(\alpha)},(\alpha_{\chim{w}_j},\alpha_{\chir{w}_j})_{j=1}^{M(\alpha)}\Big),$$ where
	\begin{enumerate}[label=(\arabic*)]
	\item $N(\alpha)$ and $M(\alpha)$ are the numbers of vertices on the immune line of $\alpha$ with outdegree $2$ and $3$, respectively,
		\item $(v_i)_{i=1}^{N(\alpha)}$ and $(w_j)_{j=1}^{M(\alpha)}$ are the corresponding vertices on the immune line, ordered in increasing distance to the root,
		\item for~$i\in[N(\alpha)]$, $\alpha_{\chir{v}_i}$ is the sub-eASG of $\alpha$ rooted in $\chir{v}_i$
		\item for~$j\in[M(\alpha)]$, $\alpha_{\chim{w}_j}$ and $\alpha_{\chir{w}_j}$ are the sub-eASGs of $\alpha$ rooted in $\chim{w}_j$ and $\chir{w}_j$.
	\end{enumerate}
See Fig. \ref{fig:forest} for an illustration.  	
\end{definition}
The next result formalises the connection between the ancestral type of the root of an eASG and the types of the roots of the trees in its forest. The proof of the result is provided in Section~\ref{sec:ancestraltypeinteractive}. \begin{figure}[t]\begin{minipage}{0.3\textwidth}
			\begin{center}
				\scalebox{.5}{\begin{tikzpicture}
				    \draw  plot [smooth cycle] coordinates {(5.6,0.75) (3.75, 0.75) (4.25,2) (4.55,3.25) (6.45,3.25) (6.85,2.15)};
					\draw  plot [smooth cycle] coordinates {(2.35,1.75) (2, 3.25) (3,3.25) (3.75,1.9)};
					
					\draw[opacity=0] (.5,0)--(2.5,4.5);
					\begin{scope}[every node/.style={regular polygon,regular polygon sides=7,draw}, very thick, minimum size=7pt]
					\node[opacity=0.2] (h0) at (4,0) {};
					
					\node (h00) at (4,1) {};
					\node[opacity=0.2] (h1) at (2.5,1) {};
					\node (h2) at (5.5,1) {};
					
					\node[opacity=0.2] (h3) at (1.5,2) {};
					\node[opacity=0.2] (h3m) at (1.5,3) {};
					\node[opacity=0.2] (h3mm) at (1.5,4) {};

					\node (h4) at (2.5,2) {};
					\node (h5) at (3.5,2) {};
					\node (h6) at (4.5,2) {};
					\node (h7) at (5.5,2) {};
					\node (h7r) at (6.5,2) {};

					\node (h31) at (2.25,3) {};

					\node (h33) at (2.75,3) {};
					
					\node (h9) at (4.75,3) {};
					\node (h10) at (5.5,3) {};
					\node (h11) at (6.25,3) {};
					
					\end{scope}

					\begin{scope}[>={latex[black]}, every node/.style={fill=white,circle},
					every edge/.style={draw=black,very thick}]
					
					\path [opacity=0.2] (h0) edge (h1);
					\path [opacity=0.2] (h0) edge (h00);
					\path [opacity=0.2] (h0) edge (h2);
					
					\path [opacity=0.2] (h1) edge (h3);
					\path [opacity=0.2] (h1) edge (h4);
					\path [opacity=0.2] (h1) edge (h5);
					
					\path [opacity=0.2] (h3) edge (h3m);
					\path [opacity=0.2] (h3m) edge (h3mm);

					\path [-] (h4) edge (h31);
					\path [-] (h4) edge (h33);

					\path [-] (h2) edge (h6);
					\path [-] (h2) edge (h7);
					\path [-] (h2) edge (h7r);
					
					\path [-] (h7) edge (h9);
					\path [-] (h7) edge (h10);
					\path [-] (h7) edge (h11);

					\end{scope}
					\begin{pgfonlayer}{background}
        \highlight{4.5mm}{cyan}{(4,0) -- (2.5,1) -- (1.5,2) -- (1.5,3) -- (1.5,4)}
        \end{pgfonlayer}
        \node[opacity=0.3] at (1.5,2) {\scalebox{1.5}{$\times$}} ;	
		\node[opacity=0.3] at (1.5,3) {\scalebox{1.5}{$\times$}} ;			
					\end{tikzpicture}}
		\end{center}	\end{minipage}\begin{minipage}{0.3\textwidth}
			\begin{center}
				\scalebox{.5}{\begin{tikzpicture}
				 \draw  plot [smooth cycle] coordinates {(3.5,1.75) (3,1.75) (3,3.25) (3.5,3.25) };
				\draw  plot [smooth cycle] coordinates {(2.75,2.75) (1.5,2.85) (2,4.25) (3,4.25) };
				\draw  plot [smooth cycle] coordinates {(5.6,0.75) (4.5,2) (5.5,3.25) (7.35,3.15)};
				
					\draw[opacity=0] (.5,0)--(2.5,4.5);
					\begin{scope}[every node/.style={regular polygon,very thick, regular polygon sides=7,draw}, minimum size=7pt]
					\node[opacity=0.2] (h0) at (4,0) {};
					
					\node[opacity=0.2] (h1) at (2.5,1) {};
					\node (h2) at (5.5,1) {};
					
					\node[opacity=0.2] (h3) at (1.75,2) {};
					\node  (h4) at (3.25,2) {};
					\node  (h5) at (3.25,3) {};
					\node (h6) at (4.75,2) {};
					\node (h7) at (6.25,2) {};

					\node[opacity=0.2] (h31) at (1,3) {};
					\node[opacity=0.2] (h311) at (1,4) {};
					\node  (h32) at (1.75,3) {};
					\node (h33) at (2.5,3) {};
					\node (h331) at (2.25,4) {};
					\node (h332) at (2.75,4) {};
					
					\node (h9) at (5.5,3) {};
					\node (h10) at (6.25,3) {};
					\node  (h11) at (7,3) {};
					
					\end{scope}

					\begin{scope}[>={latex[black]}, every node/.style={fill=white,circle},
					every edge/.style={draw=black,very thick}]
					
					\path [opacity=0.2] (h0) edge (h1);
					\path [opacity=0.2] (h0) edge (h2);
					
					\path [opacity=0.2] (h1) edge (h3);
					\path [opacity=0.2] (h1) edge (h4);
					
					\path[opacity=0.2] (h3) edge (h31);
					\path [opacity=0.2](h3) edge (h32);
					\path [opacity=0.2] (h3) edge (h33);
					
					\path [-] (h4) edge (h5);
					
					\path [-] (h2) edge (h6);
					\path [-] (h2) edge (h7);

					\path [-] (h7) edge (h9);
					\path [-] (h7) edge (h10);
					\path [-] (h7) edge (h11);
					
					\path [opacity=0.2] (h31) edge (h311);
					
					\path (h33) edge (h331);
					\path (h33) edge (h332);
					\end{scope}
					\node[opacity=0.3] at (1,3) {\scalebox{1.5}{$\times$}} ;	
					\node at (3.25,2) {\scalebox{1.5}{$\times$}} ;						
							\begin{pgfonlayer}{background}
        \highlight{4.5mm}{cyan}{((4,0) -- (2.5,1) -- (1.75,2)--(1,3)--(1,4)}
        \end{pgfonlayer}		
					
					\end{tikzpicture}}
		\end{center}	\end{minipage} \begin{minipage}{0.3\textwidth}
			\begin{center}
				\scalebox{.5}{\begin{tikzpicture}
			      \draw  plot [smooth cycle] coordinates {(2.25,2.75) (2,4.25) (4,4.25) (3.75,2.75)};
				  \draw  plot [smooth cycle] coordinates {(5.5,0.75) (4.25,2) (6.5,3.25) (6.75,2)};

					\draw[opacity=0] (.5,0)--(2.5,4.5);
					\begin{scope}[every node/.style={regular polygon,regular polygon sides=7,draw}, very thick, minimum size=7pt]
					
					\node[opacity=0.3] (h0) at (4,0) {};
					
					\node[opacity=0.3] (h1) at (2.5,1) {};
					\node (h2) at (5.5,1) {};

					\node[opacity=0.3] (h3) at (2.5,2) {};
					\node (h6) at (4.5,2) {};
					\node (h7) at (5.5,2) {};
					\node (h8) at (6.5,2) {};
					\node (h9) at (6.5,3) {};
					
					\node[opacity=0.3] (h31) at (1.5,3) {};
					\node (h32) at (2.5,3) {};
					\node (h33) at (3.5,3) {};
					
					\node (h323) at (2.75,4) {};
					\node (h322) at (2.25,4) {};
					
					\node (h333) at (3.75,4) {};
					\node (h332) at (3.25,4) {};
					\end{scope}
					
					\begin{scope}[>={latex[black]}, every node/.style={fill=white,circle},
					every edge/.style={draw=black,very thick}]
					
					\path [opacity=0.3] (h0) edge (h1);
					\path [opacity=0.3] (h0) edge (h2);
					
					\path [opacity=0.3] (h1) edge (h3);
					
					\path [opacity=0.3] (h3) edge (h31);
					\path [opacity=0.3] (h3) edge (h32);
					\path [opacity=0.3] (h3) edge (h33);
					
					\path [-] (h2) edge (h6);
					\path [-](h2) edge (h7);
					\path [-] (h2) edge (h8);
					\path [-] (h8) edge (h9);
					
					\path [-] (h32) edge (h322);
					\path [-] (h32) edge (h323);
					
					\path [-] (h33) edge (h332);
					\path [-] (h33) edge (h333);
					\end{scope}
					\node[opacity=0.3] at (2.5,1) {\scalebox{1.5}{$\times$}} ;	
					\node at (6.5,2) {\scalebox{1.5}{$\times$}} ;		\begin{pgfonlayer}{background}
        \highlight{4.5mm}{cyan}{(4,0) -- (2.5,1) -- (2.5,2)--(1.5,3)}
        \end{pgfonlayer}			
					\end{tikzpicture}}
		\end{center}	\end{minipage}
			
	\caption[Forest]{Three eASGs (black and grey), their immune lines (delimited in light blue), and the corresponding forests (black).}\label{fig:forest}
\end{figure}

\begin{proposition}\label{prop:prodoftrees}
	Let $\alpha\in\Xi$ without mark $\circ$ and $\Fs(\al)=( (\alpha_{\chir{v}_i})_{i=1}^{N(\alpha)},(\alpha_{\chim{w}_j},\alpha_{\chir{w}_j})_{j=1}^{M(\alpha)})$ be the corresponding eASG forest.
Then, for all $y_0\in[0,1]$, we have
	\begin{equation}\label{eq:equalityregioncodingancestralcoding}
		H_\star(\alpha,y_0)= y_0 \prod_{i=1}^{N(\alpha)} H(\alpha_{\chir{v}_i},y_0)\prod_{j=1}^{M(\alpha)}\left[H(\alpha_{\chim{w}_j},y_0)+H(\alpha_{\chir{w}_j},y_0)-H(\alpha_{\chim{w}_j},y_0)H(\alpha_{\chir{w}_j},y_0)\right].
	\end{equation}
\end{proposition}
Proposition~\ref{prop:prodoftrees} and the duality for the eASG process are crucial to derive the following representation of the ancestral type distribution.

\begin{theorem}[Representation of ancestral type distribution]\label{thm:ancestraldistributionfinite}
	Let~$\nu_0=0$.
	Then,
	\begin{equation}
	g_r(y_0)=y_0\exp\bigg(- \int_0^r \big(1-y(\xi;y_0)\big)\big(s+\gamma (1-y(\xi;y_0) )\big)  \dd \xi \bigg),\qquad y_0\in [0,1].\label{eq:ancestraltypedistribution}
	\end{equation}
Moreover, $g_{\infty}(y_0)= \lim_{r\to\infty}g_r(y_0)$ exists for all $y_0\in [0,1]$.
\end{theorem}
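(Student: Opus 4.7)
The plan is to combine a careful identification of the ancestor in the eASG with the Poissonian structure of the forest of stratified ASGs (Lemma~\ref{lem:equallyforests}) and the duality of Theorem~\ref{thm:duality}. Throughout, write $v_0$ for the root of $a_r$ and $\ell_0$ for its leftmost leaf.

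First I would show that, since $\nu_0=0$, the event $\{J_{v_0}^{c}(a_r)=1\}$ coincides with $\{c_{\ell_0}=1\}\cap E$, where $E$ is the event that at every vertex $v$ of outdegree $2$ on the immune line one has $\bar{c}_{\chir{v}}=1$, and at every vertex $v$ of outdegree $3$ on the immune line one has $\bar{c}_{\chim{v}}=1$ or $\bar{c}_{\chir{v}}=1$. This is a direct induction along the immune line using the ancestral-vertex propagation rule: at an outdegree-$1$ vertex (a deleterious mutation, still possible on the immune line since $\nu_1=1$) the ancestor label is transmitted unchanged, while at an outdegree-$2$ (respectively outdegree-$3$) vertex the left path is followed precisely when the right child's vertex type is unfit (respectively when not both the middle and right children's vertex types are fit).

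Next, since $\ell_0$, the sub-ASGs $\As_r(\chir{v}_i)$ hanging off the binary branchings $v_i$ of the immune line, and the truncated sub-ASGs $\As_r^\star(\hat v_j)$ hanging off its ternary branchings $\hat v_j$ are vertex-disjoint, the corresponding restrictions of the iid $\mathrm{Bernoulli}(y_0)$ leaf labels are conditionally independent given the eASG. Observing that $\bar c_{\chir{v_i}}=1$ is equivalent to the root of $S(\As_r(\chir{v}_i))$ getting the unfit type, and that ``$\bar c_{\chim{\hat v_j}}=1$ or $\bar c_{\chir{\hat v_j}}=1$'' is equivalent to the root of $S(\As_r^\star(\hat v_j))$ getting the unfit type (the artificial $\times$-marked left child becomes a weight-$0$ leaf in the stratification, which by Definition~\ref{def:H} always counts as unfit, so that the root's rule reduces to ``middle or right unfit''), Lemma~\ref{lem:equallyforests} yields
\begin{equation*}
g_r(y_0)=y_0\,\E\!\left[\prod_{i=1}^{\mathcal{N}_r}\Hs\!\big(\Te_i^{\scalebox{.6}{\sroot{1}}}(r-T_i^{\scalebox{.6}{\sroot{1}}}),y_0\big)\prod_{j=1}^{\mathcal{M}_r}\Hs\!\big(\Te_j^{\pitchs}(r-T_j^{\pitchs}),y_0\big)\right].
\end{equation*}

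Conditioning on the Poisson processes $(\mathcal{N},\mathcal{M})$ and invoking Theorem~\ref{thm:duality} for the independent stratified ASG processes gives $\E_{\ssroot{1}}[\Hs(\Te(t),y_0)]=y(t;y_0)$ and, using the leaf weights $(0,1,1)$ of $\pitchs$, $\E_{\pitchs}[\Hs(\Te(t),y_0)]=\Hs(\pitchs,y(t;y_0))=1-(1-y(t;y_0))^2$. The Campbell-type identity $\E[\prod_k f(T_k)]=\exp(-\lambda\int_0^r(1-f(t))\dd t)$ for the arrival times of a rate-$\lambda$ Poisson process on $[0,r]$, applied with $\lambda=s,\ f(t)=y(r-t;y_0)$ and with $\lambda=\gamma,\ f(t)=1-(1-y(r-t;y_0))^2$, together with the independence of $\mathcal{N}$ and $\mathcal{M}$, gives
\[
g_r(y_0)=y_0\exp\!\left(-\int_0^r\!\big[s(1-y(r-t;y_0))+\gamma(1-y(r-t;y_0))^2\big]\dd t\right),
\]
and the substitution $\xi=r-t$ produces \eqref{eq:ancestraltypedistribution}. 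The main obstacle is the ancestor localisation of the first step: one must argue that the (potentially numerous, for $\nu_0=0$) deleterious mutations sitting on the immune line, while they do alter the vertex types along it, do not interfere with ancestor tracing---this is precisely what the outdegree-$1$ case of the ancestral-vertex propagation rule guarantees. Once this is in place, the remainder is a clean Poissonian computation, with Lemma~\ref{lem:equallyforests} providing the required independence and Theorem~\ref{thm:duality} converting each stratified-ASG factor into the deterministic solution $y(\cdot;y_0)$.
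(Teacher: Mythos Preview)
Your proposal is correct and follows essentially the same route as the paper: identify the ancestor on the immune line (this is the content of the paper's Proposition~\ref{prop:prodoftrees}, which you reprove inline), pass via Lemma~\ref{lem:equallyforests} to the Poissonian forest, and then evaluate the two Poisson products using the duality of Theorem~\ref{thm:duality}. The only cosmetic difference is that the paper spells out the Poisson computation by conditioning on $\mathcal{N}_r=n$ (respectively $\mathcal{M}_r=m$) and using that the arrival times are then iid uniform on $[0,r]$, whereas you invoke the resulting Campbell-type identity $\E\big[\prod_k f(T_k)\big]=\exp\big(-\lambda\int_0^r(1-f(t))\,\dd t\big)$ directly; the two are of course equivalent.
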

More explicit formulas for $g_r$ and $g_\infty$ are provided in Proposition~\ref{prop:ancestraldistributionfinite} and Corollary~\ref{coro:ancestraltypedistribution}, respectively. These corollaries and the proof of Theorem~\ref{thm:ancestraldistributionfinite} can be found in Section~\ref{sec:ancestraltypeinteractive}.
The following result is a particular case of Corollary~\ref{coro:ancestraltypedistribution} (recall the parameter regions~\eqref{eq:parameterregions}).

\begin{figure}[t]
	\centering\scalebox{.46}{
		\includegraphics{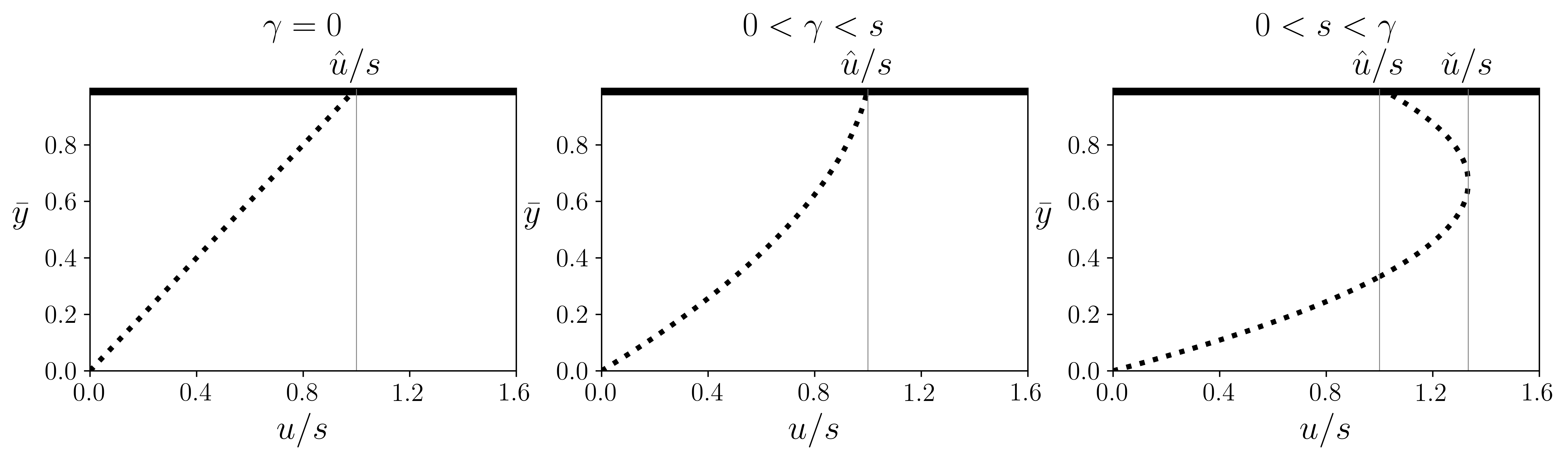}}
	\caption[Ancestral type distribution at equilibrium with pairwise interaction ($\nu_0=0$)]{Plot of the ancestral type distribution at equilibrium for the parameter regimes from Fig.~\ref{fig:plots_l_gamma}. This illustrates Corollary~\ref{coro:ancestraldistributionequilibrium}. Dotted line (resp. solid line): equilibria~$\bar{y}$ s.t. $g_{\infty}(\bar{y})=0$ (resp. $g_{\infty}(\bar{y})=1$).}
	\label{fig:plots_l_gamma_ancestral}
\end{figure} 
\begin{coro}[Long-term ancestral type distribution at equilibrium]\label{coro:ancestraldistributionequilibrium} Let~$\nu_0=0$. We have $g_{\infty}(y_{\infty}(1))=1$. Moreover, for~$y_0\in [0,1)$, in~$\Theta_1^{}$, we have $g_{\infty}(y_{\infty}(y_0))=1$; in~$\Theta_2^a$, we have $g_{\infty}(y_{\infty}(y_0))=0$; and in~$\Theta_2^b\cup\Theta_3^{}$, \begin{equation}
		g_{\infty}(y_{\infty}(y_0))=\begin{cases}
		0,&\text{if }y_0\leq \bar{y}_+,\\ 1,&\text{if }y_0> \bar{y}_+.
		\end{cases}
		\end{equation}
\end{coro}

Corollary~\ref{coro:ancestraldistributionequilibrium} is also proved in Section~\ref{sec:ancestraltypeinteractive}.

\smallskip

We conclude this section by describing the genealogical picture underlying Corollary~\ref{coro:ancestraldistributionequilibrium}. Recall that there is a natural coupling between the eASG, pASG process starting at $\rootsin$ such that $H(a_t,y_0)=H(\bar{a}_t,y_0)=\Hs(s(\bar{a_t}),y_0)$. In particular, the discussion at the end of Section~\ref{sec:mainresults:subsec:application} about the long-term output type in the sASG process translates into the analogous statements for the long-term type of the root of the eASG process. Assume that $y_0\in[0,1)$.

\begin{itemize}
	\item[$\Theta_1^{}$:] The discussion at the end of Section~\ref{sec:mainresults:subsec:application} implies~$y_{\infty}(y_0)\equiv 1$. In particular, all potential ancestral leaves in the eASGs are unfit.
	\item[$\Theta_2^a$:] The long-term type of the root of each eASG branching off the immune line is $0$ with probability $1-\ymin>0$ (this is the probability that the corresponding sASG is not absorbed). Since $s>0$, the number of eASGs branching off the immune line at selective events grows to infinity; hence, the long-term ancestral type of the root is $0$ almost surely.    
	\item[$\Theta_{2}^b\cup \Theta_3^{}$:] The long-term type of the root of each eASG branching off the immune line is $0$ with positive probability if $y_0\in\Attr(\ymin)$, and  as in case $\Theta_2^a$, the long-term ancestral type of the root is $0$ almost surely. In contrast, for $y_0\in\Attr(1)$, $y_\infty(y_0)=1$, and as in $\Theta_1^{}$, all potential ancestral leaves in the eASGs are unfit.
\end{itemize}

\subsection{Open questions}
Our tree-valued processes capture the genealogies of the model underlying~\eqref{eq:dlimitdiffeq} and allow the inference of type and ancestral-type distribution. There remain several open problems, some of which we list here.

\begin{enumerate}
	\item If $y_0\in [0,1]$ is not an unstable equilibrium located in $(0,1)$, Theorem~\ref{thm:representationequilibrium} states that $\Hs_{\infty}(y_0)$ has Bernoulli distribution. However, for unstable $y_0\in (0,1)$ the distribution of $\Hs_{\infty}(y_0)$ is unknown. \cite{MSS2018} provides some clues: the authors prove that, in their setting and for $\gamma>4$, there is an atom at~$1$~\cite[Lem.~1.19]{MSS2018} (to avoid confusion, we once more stress that they consider $1-\Hs_{\infty}(y_0)$ so that they observe the atom at~$0$), and they provide numerical evidence \cite[Fig.~2]{MSS2018} that suggests that beside this atom, the density is absolutely continuous with respect to the Lebesgue measure. Is it possible to provide more details about this distribution?

\item Suppose $\nu_0=0$. In $\Theta_1$, it follows from Proposition~\ref{prop:equilibriumpoints} and Theorem~\ref{thm:representationequilibrium} that the sASG process dies out almost surely. If we are not in $\Theta_1$ and the tree does not absorb, the total mass of the sASG process tends to $\infty$ by Proposition~\ref{prop:treegrowsinfty}. $\Hs_{\infty}(y_0)$ then depends  on $y_0$. This raises the question whether the behaviour can be explained explicitly in terms of the underlying tree structure. While this is relatively straightforward in $\Theta_2^a$ (on which we have not expanded here in order not to overload the article), it is more of a challenge in $\Theta_{2}^b\cup \Theta_3^{}$. 

\item We have derived a representation of the ancestral type distribution if $\nu_0=0$. What is the representation if $\nu_0>0$?

\item Pruning parts of the ASG upon mutations is a key step in our constructions. All our operations are specifically tailored to~\eqref{eq:dlimitdiffeq}; and in particular to the induced type propagation within the ASG. Replacing the selection and interaction terms in~\eqref{eq:dlimitdiffeq} by $y(t)(1-y(t)s(y(t))$ for a general polynomial $s$ yields a model with more general (as opposed to pairwise) interactions (see~\cite{Cordero2019} for a related stochastic model). Can our pruning operations be systemically extended to this more general setup?

\item Starting from the Moran model, time and parameters can be rescaled such that one obtains a Wright--Fisher diffusion with  frequency-dependent selection and mutation. In contrast to the deterministic limit, coalescences are still present in the corresponding ASG so that the limiting genealogical structure is a branching-coalescing graph. \cite{Cordero2019} gives a useful representation of the ASG in the case of selection only, and we suspect this carries over to the case with mutations. However, it is unclear whether the effect of mutations can be resolved on the spot as we do in Sections~\ref{sec:mainresults:subsec:pASG} and~~\ref{sec:mainresults:subsec:sASG}.
\end{enumerate}

\section{The mutation--selection equation with interaction and the law of large numbers}\label{sec:detmodelandlaw}
We now provide additional context for the mutation--selection equation and prove the results of Section~\ref{sec:mainresults:sub:detmodel} and~\ref{sec:mainresults:subsec:detModelvsMoranModel}. Moreover, we recover the equilibrium structure without interaction by taking the limit~$\gamma \to 0$ and  establish a sufficient condition for a unique equilibrium in $[0,1]$ if $\nu_0\in(0,1)$. We start by determining the equilibrium structure of~\eqref{eq:dlimitdiffeq} if $\nu_0=0$.
\begin{proof}[Proof of Proposition~\ref{prop:equilibriumpoints}]
 Since~$\nu_0=0$, the right-hand side of~\eqref{eq:dlimitdiffeq} reduces to $F(y)=(y-1)\,G(y)$ with~$G(y)=-\gamma y^2+(s+\gamma)y-u$. If $\gamma=0$, the equilibria follow immediately. If $\gamma>0$, we consider the discriminant $D=D(u,s,\gamma)\defeq(\gamma+s)^2-4u\gamma$ of the quadratic polynomial $G(y)$ as a function of~$u$. Note that $s \leq {u^\star}$, with equality if $\gamma=s$. Assume first that $u> {u^\star}$. Then $D$ is negative, and $1$ is the unique equilibrium. Next, assume that $u \leq {u^\star}$. Then, the equilibria are $1$ and the real roots of~$G$, which are  $\bar{y}_-$ and $\bar{y}_+$ as claimed. Since $D= (1-s/\gamma)^2+4(s-u)/\gamma \geq (1-s/\gamma)^2$ for $u \leq s$ (with equality if $u = s$), and $0 \leq D < (1-s/\gamma)^2$ for $s < u \leq u^\star$ (with equality if $u = u^\star$), the (in)equalities in Table~\ref{table:stability} follow easily. 
 
 \smallskip
 
As to the stabilities, note first that $F(0)=u>0$, and recall from Table~\ref{table:stability} that all zeroes of $F$ are positive. This entails that the smallest equilibrium that is a simple zero of $F$ is stable. If all equilibria are simple zeroes, the stabilities alternate. Now, consider first $\gamma=0$. If $u \neq s$, the zeroes are distinct, so the smaller (larger) equilibrium is stable (unstable). If $u=s$, the single equilibrium is a double zero at $1$. Since $F$ vanishes without changing sign at this point, the equilibrium is unstable  as an equilibrium in $\Rb$, but attracting from the left and thus stable in $[0,1]$.

\smallskip

Next, consider $\gamma>0$ and use the positions of the equilibria from Table~\ref{table:stability}. If $u<s$ or $s< u < u^\star$, all equilibria are distinct, so their stabilities alternate. If $u=s$ or $u=u^\star$, either two out of the three or all three equilibria coincide. If we have one simple and one double zero, then the equilibrium at the simple zero is always stable (because this is the smallest simple zero), while the equilibrium at the double zero is always unstable when considered in $\Rb$, or when in $(0,1)$. This also remains true in the case $\bar y_- < 1=\bar y_+$ when considered in $[0,1]$, since the equilibrium at $1$ is repelling to the left.
If the equilibrium is a triple zero,  it is stable since $F$ changes sign.
\end{proof}

If $\nu_0=0$, the two non-trivial roots of~$F$ are functions of~$s,\gamma$ and $u$, i.e. $\bar{y}_{\pm}=\bar{y}_{\pm}(s,\gamma,u)$; the equilibrium attained additionally depends on~$y_0$. To stress this, we temporarily write $y_{\infty}(y_0;s,\gamma,u)$ instead of $y_{\infty}(y_0)$. Moreover, $\bar{y}_-$ and $\bar{y}_+$ are continuous in $\gamma>0$. Proposition~\ref{prop:equilibriumpoints} yields that for $\gamma<s$, $0<\bar y_- < 1 < \bar{y}_+$. Hence, $1$ and $\bar{y}_-$ are the biologically relevant equilibria in this parameter regime. The next result establishes the continuity of $\bar{y}_-$ in $\gamma$ at $0$, and yields another way to recover the equilibrium structure in the non-interactive case.
\begin{proposition}
	For~$s,u>0$ and~$\nu_0=0$, $\lim_{\gamma \to 0}\bar{y}_-(s,\gamma,u) =  u/s.$
	In particular, $$\lim_{\gamma\to 0} y_{\infty}^{}(y_0,s,\gamma,u)=\begin{cases} \min\{\frac{u}{s},1\}, &\text{if }y_0\in [0,1),\\
	1,&\text{if }y_0=1.
	\end{cases}$$
\end{proposition}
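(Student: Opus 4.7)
The main difficulty is that the closed-form expression for $\bar{y}_2$ in \eqref{eq:equilibria} involves the indeterminate form $\tfrac{1}{2}(1+s/\gamma - \sqrt{\sigma})$ with $1+s/\gamma \to \infty$ and $\sqrt{\sigma} \sim 1 + s/\gamma$ as $\gamma \to 0$, so the limit cannot be read off directly. My plan is to rationalise the expression to expose a well-behaved limit, and then deduce the statement about $y_{\infty}$ via a case analysis based on Corollary~\ref{coro:convergencetoequilibrium}.

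First, I would note that since $\check{u} = (s+\gamma)^2/(4\gamma) \to \infty$ as $\gamma \to 0^+$, the condition $u \leqslant \check{u}$ ensuring the existence of $\bar{y}_2$ is satisfied for all sufficiently small $\gamma$, so the expression \eqref{eq:equilibria} is available. Multiplying by the conjugate $1 + s/\gamma + \sqrt{\sigma}$ and using $\sigma = (1+s/\gamma)^2 - 4u/\gamma$, I would rewrite
\begin{equation*}
\bar{y}_2(s,\gamma,u) \;=\; \frac{2u/\gamma}{1 + s/\gamma + \sqrt{\sigma}} \;=\; \frac{2u}{\gamma + s + \sqrt{(\gamma+s)^2 - 4u\gamma}}.
\end{equation*}
In this form both numerator and denominator have finite, nonzero limits as $\gamma \to 0^+$: the denominator tends to $s + \sqrt{s^2} = 2s$, yielding $\bar{y}_2 \to u/s$, which proves the first assertion.

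For the second assertion, I would fix $s, u > 0$ and $y_0 \in [0,1]$, and partition the parameter range in $u$. Since eventually $\gamma < s$ (and hence $s \geqslant \gamma$), the case analysis of Corollary~\ref{coro:convergencetoequilibrium} gives: (a) if $u < s = \hat{u}$, we are in regime (i), so $y_\infty(y_0) = \bar{y}_2$ for $y_0 \in [0,1)$ and $= 1$ for $y_0 = 1$, and the first part of the proposition gives $\bar{y}_2 \to u/s = \min\{u/s,1\}$; (b) if $u = s$, then $u = \hat{u}$ with $s \geqslant \gamma$, so we are in regime (iii) and $y_\infty(y_0) = 1 = \min\{u/s,1\}$ for all $y_0$; (c) if $u > s$, then $u > \hat{u}$ with $s \geqslant \gamma$, so again we are in regime (iii) and $y_\infty(y_0) = 1 = \min\{u/s,1\}$. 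Note that the potentially bistable regime (ii) requires $s < \gamma$, which is excluded for sufficiently small $\gamma$; this is what makes the limit independent of $y_0 \in [0,1)$. The case $y_0 = 1$ is trivial since $1$ is always an equilibrium when $\nu_0 = 0$. Combining the three cases yields the claimed expression for $\lim_{\gamma \to 0} y_\infty(y_0, s, \gamma, u)$.

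The only mildly delicate point is the boundary case $u = s$: here $\bar{y}_2 \to 1$ while simultaneously the regime changes from (i) (for $\gamma > 0$ small with $\gamma > s$, which does not apply here since $\gamma < s$) to (iii), but both perspectives give the same limit $1$, so there is no inconsistency. No further technical obstacle arises; the argument is essentially a computation followed by an application of the corollary.
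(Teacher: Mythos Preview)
Your argument is correct. The paper's own proof is a single line invoking L'H\^{o}pital's rule on the expression for $\bar{y}_2$; you instead rationalise via the conjugate, which is an equally standard (and arguably cleaner) way to resolve the same $\infty-\infty$ indeterminacy. For the ``in particular'' part, the paper gives no argument at all, whereas you supply a case analysis via Corollary~\ref{coro:convergencetoequilibrium}; your observation that the bistable regime~(ii) requires $s<\gamma$ and is therefore ruled out for small $\gamma$ is exactly what is needed to justify the $y_0$-independence of the limit on $[0,1)$. The final paragraph about the boundary case $u=s$ is slightly muddled in its phrasing but not wrong, and could simply be dropped since case~(b) already handles it cleanly.
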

\begin{proof}
 It follows from the definition of $\bar{y}_-$ and a straightforward application of L'H\^{o}pital's rule.
\end{proof}

We close this section by proving that the type-frequency process in the Moran model converges to the solution of the mutation--selection equation as the population size tends to $\infty$.
\begin{proof}[Proof of Proposition~\ref{sec:mainresults:prop:lln} (dynamical law of large numbers)]
	$(Y^{(N)})_{N\geq 1}$ is a \emph{density-dependent family} of Markov chains, because we can rewrite the rates of $Y^{(N)}$ as $q_{Y^{(N)}}^{}(k,k+\ell)=Nq\left(\frac{k}{N},\ell\right)$ for $\ell\in \Zb \setminus \{0\}$, where~$q:[0,1]\times \Zb\setminus \{0\}\to \R$ is continuous and given by $$q(y,1)=y(1-y)+(1-y)u\nu_0,\quad q(y,-1)=y(1-y)\big(1+s+\gamma(1-y)\big)+yu\nu_1,$$ together with $q(y,\ell)=0$ for $\vert \ell\vert>1$. Thus, in order to conclude, we only need to verify the following conditions of the dynamical law of large number for density-dependent families of Markov chains by~\citet[Thm.~3.1]{kurtz1970}, \begin{equation}\label{cond1}
	\sup_{y\in[0,1]}\sum_{\ell}|\ell|q(y,\ell)<\infty\quad\textrm{and}\quad \lim_{d\rightarrow\infty}\sup_{y\in[0,1]}\sum_{|\ell|>d}|\ell|q(y,\ell)=0.
	\end{equation}
	These conditions are clearly satisfied. Since the IVP~\eqref{eq:dlimitdiffeq} has a unique solution~$y(\,\cdot\,;y_0)$ from~$[0,\infty)$ to~$[0,1]$, the proof is completed.
\end{proof}
\begin{remark}
	In the absence of interaction ($\gamma=0$), Proposition~\ref{sec:mainresults:prop:lln} coincides with \citep[Prop.~3.1]{2015CorderoULT}. 
\end{remark}

\section{The ASG: dualities, pruning, and stratification}\label{sec:proofs:asg}
\subsection{The ASG in the Moran model with interaction}\label{sec:ASGMoran}The ASG is naturally embedded in the graphical representation of the Moran model. We have outlined the main idea in Section~\ref{sec:mainresults:subsec:mm}: consider a graphical realisation of the Moran model in~$[0,t]$ and, starting from time~$t$, trace back the individuals that potentially influence the type while ignoring the additional information contained in mutations. Here, we  provide the details.

\smallskip

Assume there are currently~$n$ lines in the ASG, i.e. there are~$n$ individuals that potentially influence the type of the sample. When a neutral arrow joins two lines in the current set, a \textit{coalescence event} takes place, i.\,e.\,\,the two lines merge into the single one at the tail of the arrow and the number of lines in the graph decreases by one (see Fig.~\ref{fig:coalescencecollision} (ii)). Since neutral arrows appear at rate~$1/N$ per ordered pair of lines, coalescence events occur at rate~$n(n-1)/N$ in our ASG of size~$n$. When a line in the current set is hit by a neutral arrow that emanates from a line that is currently not in the graph, a \textit{relocation event} occurs; i.\,e.\,\,the ASG continues with the incoming branch (the line at the tail of the arrow) and the number of lines in the graph does not change. Relocation events occur at rate~$n(N-n)/N$.

\smallskip

When a selective arrow hits the current set of lines, the hit individual has two potential parents, namely the individual at the incoming branch, and the one at the continuing branch. Which of these is the true parent of the individual at the descendant branch depends on the type at the incoming branch, but for the moment we work without types. This means that we must trace back both potential parents; we say the selective event remains \emph{unresolved}. Such events can be of two types: a \textit{binary branching} if the selective arrow emanates from a line outside the current set of lines, and a \textit{collision} if the selective arrow links two lines in the graph (see Fig.~\ref{fig:coalescencecollision} (iii)). The former increases the number of lines in the graph by one and, since selective arrows appear at rate $s/N$ per ordered pair of lines, binary branchings occur at rate~$sn(N-n)/N=\Os(1)$ as $N \to \infty$ in our ASG of size~$n$. The latter does not change the number of lines in the ASG and occurs at rate~$sn(n-1)/N=\Os(1/N)$.

\smallskip

When an interactive arrow hits a line in the ASG, the individuals at both the incoming and the checking branches are potentially influential for the types in the sample. The true parent depends on the types of both of them; but as before, we work without types. The resulting additional unresolved reproduction events can now be of various kinds, see Fig.~\ref{fig:coalescencecollision} (iv) and (v). A \textit{ternary branching} occurs if both the incoming and the checking arrows emanate from different lines currently not in the ASG. This increases the number of lines by two. If the incoming and the checking branch are identical, the event reduces to a binary branching or a collision, depending on whether the incoming/checking branch emerges from a line inside or outside the graph. It may also happen that the incoming and the checking branch are distinct, but one or both of them emanate(s) from a line within the current set of lines. Since interactive and checking arrow pairs occur at rate $\gamma /N^2$ per ordered triple of lines, in our ASG of size~$n$, ternary branchings occur at rate~$\gamma n (N-n)(N-n-1)/N^2 = \Os(1)$ as $N \to \infty$; all other kinds of events happen at rates of order $\Os(1/N)$.

\smallskip

In contrast to the original ASG (that is, without interaction), not all individuals that potentially influence the type of the sample are necessarily potential ancestors. Namely, the individual on the checking line is, in general, not ancestral; but its type may have an influence on the type of the sampled individual(s), and on which line is parental. Analogous to the Moran model, beneficial and deleterious mutations are superimposed on the lines at rate~$u \nu_0$ and~$u \nu_1$, respectively. The resulting object is called the \emph{untyped~ASG}; this refers to the fact that the initial types have not yet been assigned and the consequences of mutations are still unresolved. \begin{figure}[t]
	\begin{minipage}{0.19 \textwidth}
			\centering
			\scalebox{1}{
				\begin{tikzpicture}
					\draw[line width=0.1mm,opacity=0] (-0.1,-0.1) -- (2,1.6);
					\draw[line width=0.1mm,opacity=0.4] (0,0) -- (2,0);
					\draw[line width=0.1mm,opacity=0.4] (0,0.7) -- (2,0.7);
					\draw[line width=0.5mm] (1,0.7) -- (2,0.7);
					\draw[line width=0.5mm] (0,0) -- (1,0);
					\draw[-{triangle 45[scale=2.5]},color=black, line width=0.5mm] (1,0) -- (1,0.7) node[text=black, pos=.6, xshift=7pt]{};
			\end{tikzpicture}}\\ (i)
		\end{minipage}\begin{minipage}{0.19 \textwidth}
		\centering
		\scalebox{1}{
			\begin{tikzpicture}
				\draw[line width=0.1mm,opacity=0] (-0.1,-0.1) -- (2,1.6);
				\draw[line width=0.5mm,opacity=1] (0,0) -- (2,0);
				\draw[line width=0.5mm,opacity=1] (1,0.7) -- (2,0.7);
				\draw[line width=0.1mm] (0,0.7) -- (1,0.7);
				\draw[line width=0.5mm] (0,0) -- (1,0);
				\draw[-{triangle 45[scale=2.5]},color=black, line width=0.5mm] (1,0) -- (1,0.7) node[text=black, pos=.6, xshift=7pt]{};
		\end{tikzpicture}}\\ (ii)
	\end{minipage}\begin{minipage}{0.19 \textwidth}
			\centering
			\scalebox{1}{
				\begin{tikzpicture}
					\draw[line width=0.1mm,opacity=0] (-0.1,-0.1) -- (2,1.6);
					\draw[line width=0.5mm] (0,0.7) -- (2,0.7);
					\draw[line width=0.5mm] (0,0) -- (2,0);
					\draw[line width=0.1mm,opacity=0.4] (0,0) -- (2,0);
					\draw[-{open triangle 45[scale=2.5]},color=black, line width=0.5mm] (1,0) -- (1,0.7) node[text=black, pos=.6, xshift=7pt]{};
			\end{tikzpicture}} \\ (iii)
	\end{minipage}\begin{minipage}{0.19 \textwidth}
		\centering
		\scalebox{1}{
			\begin{tikzpicture}
				\draw[line width=0.1mm,opacity=0] (-0.1,-0.1) -- (2,1.6);
				\draw[line width=0.5mm] (0,1.4) -- (2,1.4);
				\draw[line width=0.5mm] (0,0.7) -- (2,0.7);
				\draw[line width=0.5mm] (0,0) -- (1,0);
				
				\draw[line width=0.1mm,opacity=0.4] (0,0) -- (2,0);
				\draw[line width=0.1mm,opacity=0.4] (0,0.7) -- (2,0.7);
				\draw[line width=0.1mm,opacity=0.4] (0,1.4) -- (2,1.4);
				
				\draw (1,.7) circle (.6mm)  [fill=black!100];
				\fill[white, draw=black] (0.9,1.5) rectangle (1.1,1.3); 
				\draw[line width=.5mm] (1,0) .. controls (1,.2) and (1.15,.5) .. (1.15,.7);
				\draw[line width=.5mm] (1.15,.7) .. controls (1.15,.9) and (1,1.05) .. (1,1.35);
				
				\draw[-{Stealth[length=2mm,width=2mm,open]},opacity=1,line width=.5mm] (1,.7) -- (1,1.5);
				\filldraw[white, draw=black] (1,0.1) -- (1.1,0) -- (1,-0.1) -- (.9,0) -- (1,0.1);
		\end{tikzpicture}}\\(iv)
	\end{minipage}\begin{minipage}{0.19 \textwidth}
		\centering
		\scalebox{1}{
			\begin{tikzpicture}
				\draw[line width=0.1mm,opacity=0] (-0.1,-0.1) -- (2,1.6);
				\draw[line width=0.5mm] (0,1.4) -- (2,1.4);
				\draw[line width=0.5mm] (0,0.7) -- (2,0.7);
				\draw[line width=0.5mm] (0,0) -- (2,0);
				
				\draw[line width=0.1mm,opacity=0.4] (0,0) -- (2,0);
				\draw[line width=0.1mm,opacity=0.4] (0,0.7) -- (2,0.7);
				\draw[line width=0.1mm,opacity=0.4] (0,1.4) -- (2,1.4);

				\draw (1,.7) circle (.6mm)  [fill=black!100];
				\fill[white, draw=black] (0.9,1.5) rectangle (1.1,1.3); 
				\draw[line width=.5mm] (1,0) .. controls (1,.2) and (1.15,.5) .. (1.15,.7);
				\draw[line width=.5mm] (1.15,.7) .. controls (1.15,.9) and (1,1.05) .. (1,1.35);
				
				\draw[-{Stealth[length=2mm,width=2mm,open]},opacity=1,line width=.5mm] (1,.7) -- (1,1.5);
				\filldraw[white, draw=black] (1,0.1) -- (1.1,0) -- (1,-0.1) -- (.9,0) -- (1,0.1);
		\end{tikzpicture}}\\(v)
	\end{minipage}
	\hfill
	\caption[Relocation, coalescence, and collisions]{Relocation (i), coalescence (ii), and various types of collision events (iii)--(v).}
	\label{fig:coalescencecollision}
      \end{figure}

      \smallskip

Once the untyped ASG has been constructed, the true ancestry and the types of the initial sample are obtained as outlined in Section~\ref{sec:mainresults:subsec:mm}; that is, assign types to the lines in the ASG and propagate them up to the sample while abiding the propagation rules of the Moran model.

\smallskip

Coalescence, collisions, and interactive events that are not ternary branchings vanish as~$N\to\infty$ (as they are~$\mathcal{O}(1/N)$ per ordered pair of lines). This is why they are absent in asymptotic ASG, and why the branching rates are simply $s$ and $\gamma$.
\subsection{The eASG: results related to Section~\ref{sec:mainresults:eASG}}\label{sec:proofseASG}
We now prove the duality relation between the mutation--selection equation and the eASG process of Theorem~\ref{sec:mainresults:thm:dualityASG}. 

\smallskip

Recall the notation of Section~\ref{sec:mainresults:eASG}. In particular, $\rho(\alpha)$ denotes the root of $\alpha$. Moreover, for~$\alpha\in \Xi$ and $v\in V(\alpha)$, let $\alpha_v\in\Xi$ be the subtree of $\alpha$ rooted in $v$. If $\deg(\rho(\alpha))=2$ (if $\deg(\rho(\alpha))=3$), we write $\chil{\asg}$ and $\chir{\asg}$ ($\chil{\asg}$, $\chim{\asg}$ and $\chir{\asg}$) for the  subtree of $\alpha$ rooted in the left and right (left, middle and right) child of $\rho(\alpha)$.

\smallskip 

The following lemma collects elementary properties of the function~$H$ that turn out to be useful in the subsequent proofs.
\begin{lem}[Properties of~$H$] \label{Hep} Let~$\asg\in \Xi$ and set $d=\deg(\rho(\alpha))$.
	\begin{enumerate}[label=(\arabic*)]
		\item If $d=0$, then $H(\asg,y_0)=y_0$.
		\item If $d=1$ and $\rho(\alpha)$ has mark~$\times$, then~$H(\asg,y_0)=1$.
		\item If $d=1$ and $\rho(\alpha)$ has mark~$\circ$, then~$H(\asg,y_0)=0$.
		\item If $d=2$, then
		$H(\asg,y_0)=H(\chil{\asg},y_0)H(\chir{\asg},y_0).$
		\item If $d=3$, then $H(\asg,y_0)=H(\chil{\asg},y_0)\left[H(\chim{\asg},y_0)+H(\chir{\asg},y_0)-H(\chim{\asg},y_0)H(\chir{\asg},y_0)\right].$
	\end{enumerate}
	In particular, $H(\asg,\cdot)$ is a polynomial of degree at most~$\lvert L(\asg)\rvert$.
\end{lem}
\begin{remark}\label{rem:sratifiedASGrecursicetree}
	In the setup of~\citep{MSS2018} (which we recalled in Remark~\ref{rem:cooperativebranching}), deleterious mutations and ternary branchings are captured by the local maps $\mathtt{dth}$ (`deaths') and $\mathtt{cob}$ (`cooperative branchings'), respectively. $H(a_t,y_0)$ corresponds to the concatenation of the higher-level maps~$\widehat{\mathtt{dth}}$ and $\widehat{\mathtt{cob}}$, respectively. In particular, (2) and~(5) of our Lemma~\ref{Hep} coincide with \citep[Eq.~$(1.84)$]{MSS2018}.
\end{remark}
\begin{proof}[Proof of Lemma~\ref{Hep}]
	If $d=0$, the tree consists only of $\rho(\alpha)$, and hence the probability that $\rho(\alpha)$ is unfit equals $y_0$, which proves $(1)$. If~$d=1$, $\rho(\alpha)$ has a mark, and its type is determined by the mutation and independent of~$y_0$. If the mutation is deleterious (beneficial), the type of $\rho(\alpha)$ is~$1$ (is~$0$), in agreement with~$(2)$ and~$(3)$. If~$d=2$, $\rho(\alpha)$ is unfit if and only if both its children are unfit. If $d=3$, $\rho(\alpha)$ is unfit if and only if its left child is unfit and either the middle or right child is unfit. In both cases this is because of the type propagation rule. (4) and (5) then follow because the types of these children are independent due to the independent assignments of types at the leaves of $\chil{\asg}$ and $\chir{\asg}$ ($\chil{\asg}$, $\chim{\asg}$, and $\chir{\asg}$). That~$H(\asg,\cdot)$ is a polynomial of degree at most~$\lvert L(\asg)\rvert$ follows via a straightforward inductive argument.
\end{proof}
The infinitesimal generator of the eASG process of Definition~\ref{def:eASGprocess} is given by $\tilde{\Gs} f= \tilde{\Gs}_{\nwedge}f+\tilde{\Gs}_{\pitch}f+\tilde{\Gs}_{\times}f+\tilde{\Gs}_{\circ}f,$ where \begin{align*}
	&\tilde{\Gs}_{\nwedge} f(\asg)\defeq\sum\limits_{\ell\in L(\asg)} s [f(\asg^{\nwedge}_\ell)-f(\asg)],\qquad  & \tilde{\Gs}_{\pitch}f(\asg)\defeq\sum_{\ell\in L(\asg)} \gamma [f(\asg^{\pitch}_\ell)-f(\asg)],\\
	&\tilde{\Gs}_{\times}f(\asg)\defeq\sum\limits_{\ell\in L(\asg)} u\nu_1 [f(\asg^{\times}_\ell)-f(\asg)],\qquad & \tilde{\Gs}_{\circ}f(\asg)\defeq\sum_{\ell\in L(\asg)} u\nu_0 [f(\asg^{\circ}_\ell)-f(\asg)],\end{align*} for a bounded function $f$ from $\Xi$ to~$\R$ ($\Xi$ is equipped with the discrete topology).
We require the following lemma to prove the duality relation of Theorem~\ref{sec:mainresults:thm:dualityASG}.
\begin{lem}\label{lem:generatorequality}
	For $\asg\in \Xi$ and $y_0\in [0,1]$, we have \begin{align}
		\tilde{\Gs}_{\nwedge}H(\cdot,y_0)(\asg)\, &=\, -sy_0(1-y_0)\, \partial_2H(\alpha,y_0),\label{eq:g1}\\
		\tilde{\Gs}_{\pitch}H(\cdot,y_0)(\asg)\, &=\, -\gamma y_0(1-y_0)^2\, \partial_2H(\alpha,y_0),\label{eq:g2}\\
		\tilde{\Gs}_{\times}H(\cdot,y_0)(\asg)\, &=\, u\nu_1(1-y_0)\, \partial_2H(\alpha,y_0),\label{eq:g3}\\
		\tilde{\Gs}_{\circ}H(\cdot,y_0)(\asg)\, &=\,u\nu_0y_0\, \partial_2H(\alpha,y_0),\label{eq:g4} \end{align}	
	where $\partial_2H(\alpha,y_0)$ is the partial derivative of~$H$ with respect to the second coordinate evaluated at $(\alpha,y_0)$.
\end{lem}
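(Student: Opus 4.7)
The plan hinges on the following multi-affine structure of $H$. I extend $H$ by allowing each leaf $\ell\in L_\asg$ to be independently unfit with its own probability $y_\ell$, and write $\tilde H(\asg;\mathbf{y})$ for the resulting probability that the root is unfit, where $\mathbf{y}=(y_\ell)_{\ell\in L_\asg}$. A direct induction on $|V_\asg|$, based on the recursive identities in parts~(3) and~(4) of Lemma~\ref{Hep} together with the base cases in parts~(1), (2), and the single unmarked root $\tilde H(\rootsin;y)=y$, shows that $\tilde H(\asg;\cdot)$ is of degree at most one in each variable $y_\ell$. Since $H(\asg,y_0)=\tilde H(\asg;y_0\mathbf{1})$, the chain rule then yields
\begin{equation}\label{eq:chainH}
\partial_2 H(\asg,y_0)=\sum_{\ell\in L_\asg}\partial_{y_\ell}\tilde H(\asg;y_0\mathbf{1}).
\end{equation}

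The second ingredient is an ``effective leaf probability'' at the modified site. For each operation $\star\in\{\nwedge,\pitch,\times,\circ\}$, the tree $\asg_\star^\ell$ coincides with $\asg$ outside the new subtree attached at $\ell$; by independence, the type at $\ell$'s former position in $\asg_\star^\ell$ is independent of the remaining leaf types and is unfit with some probability $p_\star(y_0)$. Reading these probabilities off from Lemma~\ref{Hep} gives
\[
p_\nwedge(y_0)=y_0^2,\qquad p_\pitch(y_0)=y_0(2y_0-y_0^2),\qquad p_\times(y_0)=1,\qquad p_\circ(y_0)=0.
\]
It follows that $H(\asg_\star^\ell,y_0)=\tilde H\bigl(\asg;p_\star(y_0)\mathbf{e}_\ell+y_0\mathbf{1}_{\neq\ell}\bigr)$, and by the affinity of $\tilde H$ in its $\ell$-th coordinate,
\[
H(\asg_\star^\ell,y_0)-H(\asg,y_0)=\bigl(p_\star(y_0)-y_0\bigr)\,\partial_{y_\ell}\tilde H(\asg;y_0\mathbf{1}).
\]

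Summing this identity over $\ell\in L_\asg$, multiplying by the corresponding rate $r_\star\in\{s,\gamma,u\nu_1,u\nu_0\}$, and substituting~\eqref{eq:chainH}, I obtain $\tilde{\Gs}_\star H(\cdot,y_0)(\asg)=r_\star\bigl(p_\star(y_0)-y_0\bigr)\partial_2 H(\asg,y_0)$. The elementary algebraic simplifications
\[
p_\nwedge-y_0=-y_0(1-y_0),\ \ p_\pitch-y_0=-y_0(1-y_0)^2,\ \ p_\times-y_0=1-y_0,\ \ p_\circ-y_0=-y_0
\]
then deliver~\eqref{eq:g1}--\eqref{eq:g4}. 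The only step that deserves real care is the inductive verification of the multi-affine representation underpinning~\eqref{eq:chainH}; once that decoupling is in place, the rest of the argument is a routine substitution.
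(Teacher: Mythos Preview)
Your proof is correct and takes a different route from the paper's. The paper proceeds by structural induction on $|V_\asg|$: after the single-vertex base case, it distinguishes cases according to the outdegree of the root, splits the sum $\sum_{\ell\in L_\asg}$ according to which of the subtrees $\chil{\asg}$, $\chir{\asg}$ (and possibly $\chim{\asg}$) contains $\ell$, applies the factorisations of Lemma~\ref{Hep}~(3),(4), invokes the induction hypothesis on each subtree, and reassembles via the product rule for $\partial_2 H$. Your approach instead isolates the multi-affine structure of $\tilde H$ once and for all and then treats the four operators uniformly: each operation at $\ell$ reduces to shifting the $\ell$-th marginal from $y_0$ to an effective value $p_\star(y_0)$, whence affinity and the chain rule~\eqref{eq:chainH} finish the job. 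This is more conceptual --- it explains at a glance why the same factor $\partial_2 H$ appears on the right in all four identities, and it would extend painlessly to other local leaf operations --- whereas the paper's induction is more hands-on but repeats essentially the same bookkeeping four times across the outdegree cases. One small remark: your computation for~$\circ$ actually yields $-u\nu_0 y_0\,\partial_2 H$, not the $+u\nu_0 y_0$ displayed in~\eqref{eq:g4}; this is a sign slip in the statement as printed (the minus sign is what is needed for the four identities to sum to $F(y_0)\,\partial_2 H$ in the proof of Theorem~\ref{sec:mainresults:thm:dualityASG}).
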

\begin{proof}
	We proceed by induction on the number of vertices in $\alpha$. Assume first that $|V(\alpha)|=1$. It follows from Lemma~\ref{Hep} that $H(\asg,y_0)=y_0$, and hence $\partial_2H(\alpha,y_0)=1$. Identities \eqref{eq:g1}--\eqref{eq:g4} follow for~$\asg$ from the properties of~$H$ stated in Lemma~\ref{Hep}. Now assume \eqref{eq:g1}--\eqref{eq:g4} hold for any $\asg\in \Xi$ with $\lvert V(\asg) \rvert=n$ for some $n\in \N$. We now prove that they also hold for~$\asg\in \Xi$ with $\lvert V(\asg) \rvert=n+1$. We distinguish cases according to the outdegree of~$\varrho \defeq\rho(\alpha)$. If~$\deg(\varrho)=1$, then $\varrho$ has a mutation mark and its type is determined by the mark, see Lemma~\ref{Hep} (1) and (2). Therefore, $H(\asg,\cdot)$ is constant, and the right-hand side in \eqref{eq:g1}--\eqref{eq:g4} is~$0$. The left-hand side in \eqref{eq:g1}--\eqref{eq:g4} is also~$0$, because $H(\asg^\star_\ell,y_0)=H(\asg,y_0)$ for all~$\ell\in L(\asg)$, $y_0\in [0,1]$, and $\star\in\{\nwedge,\pitch,\times,\circ\}$. Assume next that~$\deg(\varrho)=2$. The root of $\asg_\ell^\nwedge$ is still $\varrho$ with outdegree~$2$. By construction, if $\ell\in L(\chil{\asg})$ (if $\ell\in L(\chir{\asg})$), then $(\asg_\ell^{\nwedge})_{\chil{\varrho}}=(\chil{\asg})_\ell^\nwedge$  and $(\asg_\ell^{\nwedge})_{\chir{\varrho}}=\chir{\asg}$ (then $(\asg_\ell^{\nwedge})_{\chil{\varrho}}=\chil{\asg}$ and $(\asg_\ell^{\nwedge})_{\chir{\varrho}}=(\chir{\asg})_\ell^\nwedge$). As a consequence, \begin{align*}
		\tilde{\Gs}_\nwedge H(\cdot,y_0)(\asg)&=H(\chir{\asg},y_0)\sum_{\ell \in L(\chil{\asg})} s[H\big((\chil{\asg})^\nwedge_\ell,y_0\big)-H(\chil{\asg},y_0)]+H(\chil{\asg},y_0)\sum_{\ell \in L(\chir{\asg})}  s[H\big((\chir{\asg})^\nwedge_\ell,y_0\big)- H(\chir{\asg},y_0)]\\
		&=H(\chir{\asg},y_0) \left[-sy_0(1-y_0)\, \partial_2 H(\chil{\asg},y_0) \right]+H(\chil{\asg},y_0)\left[-sy_0(1-y_0)\, \partial_2 H(\chir{\asg},y_0)\right]\\
		&=-sy_0(1-y_0)\, \partial_2 H(\asg,y_0),
	\end{align*}
	where we used the induction hypothesis and Lemma~\ref{Hep}~(4). This proves~\eqref{eq:g1}. Analogous arguments lead to~\eqref{eq:g2}--\eqref{eq:g4}. Assume now that~$\deg(\varrho)=3$. If $\ell\in L(\chil{\asg})$ (if $\ell\in L(\chim{\asg})$; or $\ell\in L(\chir{\asg})$), then $(\asg_\ell^{\nwedge})_{\chil{\varrho}}=(\chil{\asg})_\ell^\nwedge$, $(\asg_\ell^{\nwedge})_{\chim{\varrho}}=\chim{\asg}$, and $(\asg_\ell^{\nwedge})_{\chir{\varrho}}=\chir{\asg}$ (then $(\asg_\ell^{\nwedge})_{\chim
		{\varrho}}=(\chim{\asg})_\ell^\nwedge$, $(\asg_\ell^{\nwedge})_{\chil{\varrho}}=\chil{\asg}$, and $(\asg_\ell^{\nwedge})_{\chir{\varrho}}=\chir{\asg}$; or $(\asg_\ell^{\nwedge})_{\chir{\varrho}}=(\chir{\asg})_\ell^\nwedge$, $(\asg_\ell^{\nwedge})_{\chim{\varrho}}=\chim{\asg}$, and $(\asg_\ell^{\nwedge})_{\chil{\varrho}}=\chil{\asg}$), where $\varrho$ is, by construction, also the root of~$\asg^\ell_\nwedge$. Using Lemma~\ref{Hep}~(5) and the induction hypothesis,
	\begin{align*}
		\tilde{\Gs}_\nwedge H(\cdot,y_0)(\asg)&=[H(\chim{\asg},y_0)+H(\chir{\asg},y_0)-H(\chim{\asg},y_0)H(\chir{\asg},y_0)]\sum_{\ell \in L(\chil{\asg})}  s\big[H\big((\chil{\asg})^\nwedge_\ell,y_0\big)-H(\chil{\asg},y_0)\big]\\
		&\quad +H(\chil{\asg},y_0)\big(1-H(\chir{\asg},y_0)\big) \sum_{\ell \in L(\chim{\asg})} s\big[H\big((\chim{\asg})^\nwedge_\ell,y_0\big)-H(\chim{\asg},y_0)\big]\\
		&\quad +H(\chil{\asg},y_0)\big(1-H(\chim{\asg},y_0)\big) \sum_{\ell \in L(\chir{\asg})}  s\big[H\big((\chir{\asg})^\nwedge_\ell,y_0\big)-H(\chir{\asg},y_0)\big]\\
		&=-sy_0(1-y_0) \Big\{ [H(\chim{\asg},y_0)+H(\chir{\asg},y_0)-H(\chim{\asg},y_0)H(\chir{\asg},y_0)]\partial_2 H(\chil{\asg},y_0) \\
		&\quad +H(\chil{\asg},y_0)\big(1-H(\chir{\asg},y_0)\big) \, \partial_2 H(\chim{\asg},y_0)+H(\chil{\asg},y_0)\big(1-H(\chim{\asg},y_0)\big) \, \partial_2 H(\chir{\asg},y_0)\Big\}\\
		&=-sy_0(1-y_0)\, \partial_2 H(\asg,y_0).
	\end{align*}
	Analogous arguments apply to $\tilde{\Gs}_\pitch H(\cdot,y_0)(\asg)$, $\tilde{\Gs}_\times H(\cdot,y_0)(\asg)$, and $\tilde{\Gs}_\circ H(\cdot,y_0)(\asg)$.
\end{proof}
Lemma~\ref{lem:generatorequality} is essential in the following proof of the duality between the eASG process and the mutation--selection equation.
\begin{proof}[Proof of Theorem~\ref{sec:mainresults:thm:dualityASG} (duality eASG)]
	Consider~$y\defeq (y(t;y_0))_{t\geq 0}$ as a (deterministic) Markov process on~$[0,1]$ with generator~$\Gs_F$ given by \begin{equation}
		\Gs_Fg(y_0)=F(y_0)\,\frac{\dd  g}{\dd  y}(y_0)\label{eq:generatordetlim}\end{equation}
	for~$g\in \Cs^1([0,1],\R)$ and $F$ of~\eqref{eq:dlimitdiffeq}. Fix~$\asg\in \Xi$ and $t\geq 0$. Clearly, $H(\asg,\cdot)\in \Cs^1([0,1],\R)$ ($H(\asg,\cdot)$ is a polynomial, see Lemma~\ref{Hep}). Since~$F$ is continuously differentiable, it follows from a classical result of ODE theory~\citep[Thm.~8.43]{kelley2004} that $y(t;\cdot)\in \Cs^1([0,1],\R)$. Hence, also $P_t^FH(\asg,\cdot)=H(\asg,y(t;\cdot))\in \Cs^1([0,1],\R)$, where $(P^{F}_{t})_{t\geq 0}$ is the transition semigroup corresponding to $y$ acting on $\Cs([0,1])$ (equipped with the uniform norm). The set $\Xi$ is countable and equipped with the discrete topology. The number of possible transitions of the eASG process at any given state is finite and each transition occurs at a finite rate. Therefore, the domain of its generator contains any bounded function from $\Xi$ to~$\R$. In particular, for $y_0\in [0,1]$, $H(\cdot,y_0)$ and $\tilde{P}_t H(\cdot,y_0)$ lie in the domain of its generator, where $(\tilde{P}_{t})_{{t}\geq 0}$ is the transition semigroup corresponding to the eASG process acting on the space of bounded, Borel measurable functions (equipped with the uniform norm). Using Lemma~\ref{lem:generatorequality}, we deduce that
	$\tilde{\Gs}H(\cdot,y_0)(\asg)=\mathcal{G}_FH(\asg,\cdot)(y_0)$ for $\asg\in \Xi$ and $y_0\in [0,1].$
	Since~$H$ is bounded and continuous, the result follows from~\citep[Prop.~1.2]{jansen2014}.
\end{proof}
\subsection{The pASG: results related to Section~\ref{sec:mainresults:subsec:pASG}}\label{sec:proofspASG}
The main goal of this section is to prove Corollary~\ref{dualpASG}. 
We start with an elementary, but important property of the type propagation. For this we need to introduce the notion of \emph{lopping}. We say that $\alpha_0\in\Xi$ (resp. $\bal_0\in\Xi^\pA$) is a lopping of $\alpha\in\Xi$ (resp. $\bal\in\Xi^\pA$), if $\alpha_0$ is obtained from~$\alpha$ (resp. $\bal)$ by removing the descendants of a subset of vertices in $V(\alpha)$ (resp.~$V(\bal)$). See Fig. \ref{fig:lopping} for an illustration. Note that the notion of lopping and admissible pruning are different: a lopping of $\alpha\in\Xi$ is also in $\Xi$, while an admissible pruning of $\alpha$ is in $\Xi^\pA$. 
\begin{figure}[t]
	\begin{center}
		\begin{minipage}[b]{.4\linewidth}
			\centering\scalebox{.6}{\begin{tikzpicture}
				\begin{scope}[every node/.style={regular polygon,regular polygon sides=7, very thick,draw}, minimum size=10pt]
				\node (h1) at (0,1.5) {};
				\node (bl2) at (0,2.5) {};
				\node (l2) at (0,3.5) {};
				\node (h2) at (-2.5,2.5) {};
				\node (l3)at (2.5,2.5) {};
				
				\node (l4) at (-3.1,3.5) {};
		        \node (l41) at (-3.1,4.5){};	
				
				\node (l6)at (-1.9,3.5) {};
				\node[opacity=0.2] (l7)at (-0.8,4.5) {};
				\node[opacity=0.2] (l8)at (0,4.5) {};
				\node[opacity=0.2] (l9)at (0.8,4.5) {};
				\node[opacity=0.2](l10)at (1.7,3.5) {};
				\node[opacity=0.2] (l11)at (2.5,3.5) {};
				\node[opacity=0.2] (l12)at (3.3,3.5) {};
				
				\node[opacity=0.2] (l13)at (3.3,4.5) {};
				\node[opacity=0.2] (l14)at (2.5,4.5) {};
				\node[opacity=0.2] (l15)at (4.1,4.5) {};
				\end{scope}
				\node[] at (0,2.5) {\scalebox{1.5}{$\circ$}};
				\node[] at (-3.1,3.5) {\scalebox{1.5}{$\times$}};
				\begin{scope}[>={latex[black]}, every node/.style={regular polygon,regular polygon sides=7,draw},
				every edge/.style={draw=black,very thick}]
				\path [-] (h1) edge (h2);
				\path [-] (h1) edge (bl2);
				\path [-] (l2) edge (bl2);
				\path [-] (l4) edge (l41);
				\path [-] (h1) edge (l3);
				\path [-] (h2) edge (l4);
				\path [-] (h2) edge (l6);
				
				\path [opacity=0.2] (l2) edge (l7);
				\path [opacity=0.2] (l2) edge (l8);
				\path [opacity=0.2] (l2) edge (l9);
				
				\path [opacity=0.3] (l3) edge (l10);
				\path [opacity=0.3] (l3) edge (l11);
				\path [opacity=0.3](l3) edge (l12);
				
				\path [opacity=0.3] (l12) edge (l13);
				\path [opacity=0.3] (l12) edge (l14);
				\path [opacity=0.3] (l12) edge (l15);
				\end{scope}
				\end{tikzpicture}}
		\end{minipage}
	\begin{minipage}[b]{.4\linewidth}
			\centering\scalebox{.6}{\begin{tikzpicture}
				\begin{scope}[every node/.style={regular polygon,regular polygon sides=7, very thick,draw}, minimum size=10pt]
				\node (h1) at (0,1.5) {};
				\node (l2) at (0,2.5) {};
				\node (h2) at (-2.5,2.5) {};
				\node (l3)at (2.5,2.5) {};
				
				\node (l4)at (-3.1,3.5) {};
				\node (l6)at (-1.9,3.5) {};
				\node[opacity=0.2] (l7)at (-0.8,3.5) {};
				\node[opacity=0.2] (l8)at (0,3.5) {};
				\node[opacity=0.2] (l9)at (0.8,3.5) {};
				\node[opacity=0.2](l10)at (1.7,3.5) {};
				\node[opacity=0.2] (l11)at (2.5,3.5) {};
				\node[opacity=0.2] (l12)at (3.3,3.5) {};
				
				\node[opacity=0.2] (l13)at (3.3,4.5) {};
				\node[opacity=0.2] (l14)at (2.5,4.5) {};
				\node[opacity=0.2] (l15)at (4.1,4.5) {};
				\end{scope}
				\node[opacity=0.2] at (2.5,4.5) {\scalebox{1.5}{$\times$}};
				\begin{scope}[>={latex[black]}, every node/.style={regular polygon,regular polygon sides=7,draw},
				every edge/.style={draw=black,very thick}]
				\path [-] (h1) edge (h2);
				\path [-] (h1) edge (l2);
				\path [-] (h1) edge (l3);
				\path [-] (h2) edge (l4);
				\path [-] (h2) edge (l6);
				
				\path [opacity=0.2] (l2) edge (l7);
				\path [opacity=0.2] (l2) edge (l8);
				\path [opacity=0.2] (l2) edge (l9);
				
				\path [opacity=0.3] (l3) edge (l10);
				\path [opacity=0.3] (l3) edge (l11);
				\path [opacity=0.3](l3) edge (l12);
				
				\path [opacity=0.3] (l12) edge (l13);
				\path [opacity=0.3] (l12) edge (l14);
				\path [opacity=0.3] (l12) edge (l15);
				\end{scope}
				\end{tikzpicture}}
		\end{minipage}
	\end{center}
	\caption[lopping]{Left: tree in $\Xi$ (black and grey) and a lopping of it (black). Right: tree in $\Xi^\pA$ (black and grey) and a lopping of it (black).}
	\label{fig:lopping}
\end{figure}
\begin{lem}\label{elprop}
Let $\alpha\in\Xi$ and $\bal\in\Xi^\pA$, and let $\alpha_0\in\Xi$ and $\bal_0\in\Xi^\pA$ be a lopping of $\alpha$ and $\bal$, respectively.
\begin{enumerate}[label=$(\alph*)$]
\item For any leaf-type configuration $c$ of $\alpha$, the type of the root of $\alpha_0$ under the leaf-type configuration $(\cpv{\ell})_{\ell\in L(\alpha_0)}$ coincides with the type of the root of $\alpha$ under $c$.
\item For any reduced leaf-type configuration $\hat{c}$ of $\bal$, the type of the root of $\bal_0$ under the reduced leaf-type configuration $(\chv{\ell})_{\ell\in \hat{L}(\bal_0)}$ coincides with the type of the root of $\bal$ under $\hat{c}$.
\end{enumerate}
\end{lem}
\begin{proof}
It follows directly from the fact that the type of a vertex is exclusively determined by the type of its children
(see Definitions~\ref{typro} and \ref{typrob}).
\end{proof}
If we type a single leaf in a pruned tree, the type of some of its ancestors are already determined under the type propagation described in Definition~\ref{typrob} (see Fig. \ref{fig:singletype} for an example).
\begin{figure}[t!]
	\begin{center}
			\centering\scalebox{.55}{\begin{tikzpicture}
				\begin{scope}[every node/.style={regular polygon,regular polygon sides=7, very thick,draw}, minimum size=15pt]
				\node (h0) at (-4,0.5) {};
				\node (hm1) at (-8,1.5) {};
				\node (hm1m) at (-8,2.5) {};
				\node (hm1l) at (-10.5,2.5) {};
				\node (hm1r) at (-5.5,2.5) {};
				\node (hm1p) at (-6.1,3.5) {};
				\node (n1) at (-6.9,4.5) {};
				\node (n) at (-6.1,4.5) {};
				\node (n2) at (-5.4,4.5) {};
				\node (hm1mi) at (-4.9,3.5) {};
				\node (h1) at (0,1.5) {};
				\node (l2) at (0,2.5) {};
				\node (h2) at (-2.5,2.5) {};
				\node (l3)at (2.5,2.5) {};
				
				\node (l7)at (-0.8,3.5) {};
				\node (l8)at (0,3.5) {};
				\node (l9)at (0.8,3.5) {};
				\node(l10)at (1.7,3.5) {};
				\node(l11)at (2.5,3.5) {};
				\node(l12)at (3.3,3.5) {};
				
				\node (l13)at (3.3,4.5) {};
				\node (l14)at (2.5,4.5) {};
				\node (l15)at (4.1,4.5) {};
				\end{scope}
				\node at (2.5,4.5) {\scalebox{1.5}{$\times$}};
				\node at (1.7,3.5) {\scalebox{1.5}{$\times$}};
				\node at (-2.5,2.5) {\scalebox{1.5}{$\times$}};
				\node at (0,1.5) {\scalebox{1.5}{$1$}};
				\node at (3.3,4.5) {\scalebox{1.5}{$1$}};
				\node at (3.3,5.1)  {\scalebox{1.5}{$\downarrow$}};
				\node at (3.3,3.5) {\scalebox{1.5}{$1$}};
				\node at (2.5,2.5) {\scalebox{1.5}{$1$}};
				\node at (-6.9,4.5)  {\scalebox{1.5}{$0$}};
				\node at (-6.9,5.1)  {\scalebox{1.5}{$\downarrow$}};
				
				\node at (-6.1,3.5)  {\scalebox{1.5}{$0$}};
				\node at (-5.5,2.5)  {\scalebox{1.5}{$0$}};
				
				\begin{scope}[>={latex[black]}, every node/.style={regular polygon,regular polygon sides=7,draw},
				every edge/.style={draw=black,very thick}]
				\path [-] (h1) edge (h2);
				\path [-] (h0) edge (hm1);
				\path [-] (h0) edge (h1);
						\path [-] (hm1m) edge (hm1);		
						\path [-] (hm1l) edge (hm1);		
						\path [-] (hm1r) edge (hm1);
				\path [-] (hm1r) edge (hm1mi);
				\path [-] (hm1r) edge (hm1p);
				\path [-] (n1) edge (hm1p);
				\path [-] (n2) edge (hm1p);
				\path [-] (n) edge (hm1p);
				\path [-] (h1) edge (l2);
				\path [-] (h1) edge (l3);

				\path  (l2) edge (l7);
				\path  (l2) edge (l8);
				\path  (l2) edge (l9);
				
				\path  (l3) edge (l10);
				\path  (l3) edge (l11);
				\path  (l3) edge (l12);
				
				\path  (l12) edge (l13);
				\path  (l12) edge (l14);
				\path  (l12) edge (l15);
				\end{scope}
				\end{tikzpicture}}
	\end{center}
	\caption[singletype]{Type propagation in a tree in $\Xi^\pA$. Leaves indicated by $\downarrow$ have been assigned the type displayed in the interior of the leaf. Some ancestors of these leaves inherit the type due to the type propagation rules.}
	\label{fig:singletype}
\end{figure}
\begin{lem}\label{propa}
Let $\bal\in\Xi^\pA$, $\ell\in \hat{L}(\bal)$, $v\in V(\bal)$, and let $\hat{c}$ be a reduced leaf-type configuration of $\bal$. 
\begin{enumerate}[label=$(\alph*)$]
\item If $\ell$ is not a firewall, $\hat{c}_\ell=1$, and $w_\ell(\bal)\prec_\bal^{} v\preceq_\bal^{} \ell$, then $\chv{v}=1$.
\item If $\hat{c}_\ell=0$ and $\rho(R_\ell(\bal))\preceq_\bal^{} v\preceq_\bal^{} \ell$, then $\chv{v}=0$.
\end{enumerate}
\end{lem}
\begin{proof}
We first prove $(a)$. By assumption, $\ell$ is not a firewall. Let
$$w_\ell(\bal)\eqdef v_0\preceq_\bal v_1\preceq_\bal\cdots\preceq_\bal v_n\defeq\ell$$
be the vertices in the path connecting $w_\ell(\bal)$ and $\ell$ (note that $n>0$ by assumption). We have to show that, for all $i\in[n]$, $\chv{v_i}=1$. We do this via a backward induction in $i\in[n]$. For $i=n$, the result is true by assumption. We assume that $n>1$ (otherwise the proof is already complete) and that $i\in[n-1]$ is such that $\chv{v_{i+1}}=1$. By definition of $w_\ell(\bal)$, $v_i$ and $v_{i+1}$ are not firewalls. Thus, $\deg(v_i)=3$, and the left child of $v_i$ is not $v_{i+1}$ but is a leaf and has mark $\times$. Since $\chv{v_{i+1}}=1$, we deduce from the type propagation rules that $\chv{v_{i}}=1$. This ends the proof of $(a)$.
\smallskip

Now we prove $(b)$. Let $\rho(R_\ell(\bal))\eqdef v_1\preceq_\bal\cdots\preceq_\bal v_n\defeq\ell$ be the vertices in the directed path connecting $\rho(R_\ell(\bal)$ and $\ell$. We have to show that for all $i\in[n]$, $\chv{v_i}=0$. We do this via a backward induction in $i\in[n]$. For $i=n$, the result is true by assumption. We assume that $n>1$ (otherwise the proof is already complete) and that $i\in[n-1]$ is such that $\chv{v_{i+1}}=0$. Since~$v_i$ and~$v_{i+1}$ belong to the same region, then $\deg(v_i)=2$, or $\deg(v_i)=3$ and~$v_{i+1}$ is the left child of~$v_i$. It follows from the type propagation rules that $\chv{v_{i}}=0$, which ends the proof of~$(b)$. 
\end{proof}
The following result shows that for the root type, assigning type $1$ (resp. $0$) to an unmarked leaf $\ell$ of a pruned tree is equivalent to applying the pruning operator $\pi_\ell^\times$ (resp. $\pi_\ell^\circ$) to the tree.
\begin{lem}\label{typ-prun}
Let $\bal\in\Xi^\pA$, $\ell\in\hat{L}(\bal)$ and let $\hat{c}$ be a reduced leaf-type configuration of $\bal$. 
\begin{enumerate}[label=$(\alph*)$]
\item $\hat{L}(\pi_\ell^\times(\bal)),\, \hat{L}(\pi_\ell^\circ(\bal))\subset \hat{L}(\bal)$.
\item If $\hat{c}_\ell=1$, the type of the root of $\bal$ under $\hat{c}$ agrees with  the type of the root of $\pi_\ell^\times(\bal)$ under  $(\hat{c}_\ell)_{\ell\in\hat{L}(\pi_\ell^\times(\bal))}$.
\item If $\hat{c}_\ell=0$, the type of the root of $\bal$ under $\hat{c}$ agrees withthe type of the root of $\pi_\ell^\circ(\bal)$ under  $(\hat{c}_\ell)_{\ell\in\hat{L}(\pi_\ell^\circ(\bal))}$.
\end{enumerate}
\end{lem}
\begin{proof}
Fix $\bal\in\Xi^\pA$, $\ell\in\hat{L}(\bal)$ and a reduced leaf-type configuration $\hat{c}$ of $\bal$. 
Part $(a)$ is a direct consequence of the definitions of $\pi_\ell^\times$ and $\pi_\ell^\circ$. 
\smallskip

Let us prove $(b)$. By assumption, $\hat{c}_\ell=1$. If $\ell$ is a firewall, $\pi_\ell^\times(\bal)$ is obtained by marking $\ell$ with $\times$ in $\bal$, and the result follows from the definition of type propagation in $\Xi^\pA$. Assume now that $\ell$ is not a firewall. We define 
\[
    v_\ell \defeq \begin{cases} \text{the child of } w_\ell(\bal) \text{ that is  } \not \preceq_{\bar \alpha}\ell, & \text{if } \deg(w_\ell(\bal))=2, \\
     \text{the left child of } w_\ell(\bal), &  \text{if } \deg(w_\ell(\bal))=3.\end{cases}
\]   
 Let $\bal_0$ be the lopping of $\bal$ obtained by removing from $\bal$ the descendants of $w_\ell(\bal)$. By Lemma \ref{elprop} the type of the root of $\bal$ under $\hat{c}$ equals the type of the root of $\bal_0$ under $(\chv{l})_{l\in \hat{L}(\bal_0)}$. Let now $\bal_1$ be the lopping of $\pi_\ell^\times(\bal)$ obtained by removing  the descendants of $v_\ell$. Thanks to  Lemma \ref{elprop}, the type of the root of $\pi_\ell^\times(\bal)$ under $(\chv{l})_{l\in \hat{L}(\pi_\ell^\times(\bal))}$ agrees with the type of the root of $\bal_1$ under $(\chv{l})_{l\in \hat{L}(\bal_1)}$. By the definition of $\pi_\ell^\times(\bal)$, the trees $\bal_0$ and $\bal_1$ are equal (after relabelling  $w_\ell(\bal)$ as $v_\ell$). Therefore, it only remains to show that 
\begin{equation}\label{equal}
\chv{v_\ell}=\chv{w_\ell(\bal)}
\end{equation}
 To this end, note first that $v_\ell$ is a child of $w_\ell(\bal)$ and is not an ancestor of $\ell$. Moreover, the child of $w_\ell(\bal)$ that is $\preceq_\bal \ell$ has, thanks to Lemma \ref{propa}, type $1$. Therefore \eqref{equal} follows from the type propagation rules. 
\smallskip

Now, we prove $(c)$. By assumption, $\hat{c}_\ell=0$. If $\rho(\bal)\in R_\ell(\bal)$, we deduce from Lemma~\ref{propa} that $\rho(\alpha)$ gets type~$0$ under $\hat{c}$. In addition, $\pi_\ell^\circ(\bal)=\rootben$, and the result follows in this case. Assume now that $\rho(\bal)\notin R_\ell(\bal)$. In $\bal$, let $v_\ell$ be the parent of $\rho(R_\ell(\bal))$ and $\chil{v}_\ell$ its left child. Denote by $\chrl{\ell}$ the vertex-type propagation of $(\hat{c}_l)_{l\in\hat{L}(\pi_\ell^\circ(\bal))}$ in $\pi_\ell^\circ(\bal)$. Assume (as subcase (i)) that $\chil{v}_\ell$ has no mark. Then, the tree $\bal_0$ obtained from $\bal$ by removing  the descendants of $\rho(R_\ell(\bal))$ is a lopping of both $\bal$ and of $\pi_\ell^\circ(\bal)$. In addition, we have
$\chv{l}=\hat{c}_l=\chrl{\ell}_l,\quad\textrm{for $l\in\hat{L}(\bal_0)$ with $l\neq v_\ell.$}$
Thus, thanks to Lemma \ref{elprop}, it is enough to show that
\begin{equation}\label{equal2}
\chv{v_\ell}=\chrl{\ell}_{v_\ell}.
\end{equation}
In addition, the children of $v_\ell$ in $\bal$ that differ from $\rho(R_\ell(\bal))$ are the children of $\rho(R_\ell(\bal))$ in $\pi_\ell^\circ(\bal)$ and they have, by construction, the same type under $\chr$ and $\chrl{\ell}$. Furthermore, Lemma~\ref{propa} implies that $\chv{\rho(R_\ell(\bal))}=0$, and hence~\eqref{equal2} follows by the type propagation rules. Finally, assume (as subcase (ii)) that $\chil{v}_\ell$ is a leaf and has mark $\times$. Let $\bal_1$ be the tree obtained by removing in $\bal$ the mark of $\chil{v}_\ell$ and extend $\hat{c}$ to the reduced leaf-type configuration $\hat{c}^{1}$ of $\bal_1$ by setting $\hat{c}^1_{\chil{v}_\ell}=1$. This way, the roots of $\bal$ and $\bal_1$ have the same type under $\hat{c}$ and $\hat{c}^1$, respectively. Moreover, applying subcase (i) to $\bal_1$ , we deduce that
the roots of $\bal_1$ and $\pi_\ell^\circ(\bal_1)$ have the same type under $\hat{c}^1$ and its restriction to $\hat{L}(\pi_\ell^\circ(\bal_1))$, respectively. The result then follows in this case, by noticing that $\pi_\ell^\circ(\bal)=\pi_{\chil{v}_\ell}^\times(\pi_\ell^\circ(\bal_1))$, and applying part $(b)$ to $\pi_\ell^\circ(\bal_1)$ at $\chil{v}_\ell$. This ends the proof.
\end{proof}

The next result is at the core of the proof of Corollary~\ref{dualpASG}.

\begin{lem}\label{admpru}
	Let $\alpha\in\Xi$ and let $\bal\in\Xi^{\pA}$ be an admissible pruning of $\alpha$. 
	\begin{enumerate}[label=$(\alph*)$]
	\item For any $\ell\in L(\alpha)\setminus\hat{L}(\bal)$, $\bal$ is an admissible pruning of $\asg_\ell^{\nwedge}$, $\asg_\ell^{\pitch}$, $\asg_\ell^{\times}$ and $\asg_\ell^{\circ}$.
	\item For any $\ell\in \hat{L}(\bal)$, $\bal_\ell^\nwedge$, $\bal_\ell^\pitch$, $\pi_\ell^\times(\bal)$ and $\pi_\ell^\circ(\bal)$ are admissible prunings of $\asg_\ell^{\nwedge}$, $\asg_\ell^{\pitch}$, $\asg_\ell^{\times}$ and $\asg_\ell^{\circ}$, respectively. 
	\end{enumerate}
 \end{lem}
\begin{proof}
Let $\alpha\in\Xi$ and let $\bal\in\Xi^{\pA}$ be an admissible pruning of $\alpha$. 
\smallskip

\underline{Proof of  $(a)$}. Let $\ell\in L(\alpha)\setminus\hat{L}(\bal)$ and $\star\in\{\nwedge,\pitch,\times,\circ\}$. We have to show that $\bal$ and $\alpha_\ell^\star$ satisfy conditions $(1)$, $(2)$ and $(3)$ in Definition \ref{adp}. Since $\bal$ is an admissible pruning of $\alpha$, we see from the definition of $\alpha_\ell^\star$ that
$$V(\bal)\subset V(\alpha)\subset V(\alpha_\ell^\star)\quad\textrm{and}\quad \hat{L}(\bal)\subset L(\alpha)\setminus\{\ell\}\subset L(\alpha_\ell^\star).$$ 
Thus, $(1)$ is satisfied. Now, let $u,v\in V(\bal)$ and assume that $u\prec_\bal^{} v$. Since $\bal$ is an admissible pruning of $\alpha$, we have $u\prec_\alpha^{} v$. Moreover, $\alpha$ is a subtree of $\alpha_\ell^\star$, and hence $u\prec_{\alpha_\ell^\star}^{} v$, which proves $(2)$. It remains to prove $(3)$. Let $c$ be a leaf-type configuration of $\alpha_\ell^\star$ and define the leaf-type configuration $\tilde{c}$ of $\alpha$ via $\tilde{c}_l=\cpv{l}$, $l\in L(\alpha)$. By construction, the type of the root of $\alpha_\ell^\star$ under ${c}$ coincides with the type of the root of $\alpha$ under $\tilde{c}$. The latter coincides with the type of the root of $\bal$ under $(\tilde{c}_l)_{l\in\hat{L}(\bal)}$, because $\bal$ is an admissible pruning of $\alpha$. Thus, (3) follows using that $(\tilde{c}_l)_{l\in\hat{L}(\bal)}=({c}_l)_{l\in\hat{L}(\bal)}$.
\smallskip

\underline{Proof of $(b)$ for $\nwedge$ and $\pitch$}. Fix $\ell\in \hat{L}(\bal)$. We will show that, for $\star\in\{\nwedge,\pitch\}$, $\bal_\ell^\star$ and $\alpha_\ell^\star$ satisfy conditions $(1)$, $(2)$ and $(3)$ in Definition \ref{adp}. Note that, setting $L_\ell^\pitch=\{\chil{\ell},\chim\ell,\chir{\ell}\}$ and $L_\ell^\nwedge=\{\chil{\ell},\chir{\ell}\}$, we have 
\[V(\bal_\ell^\star)=V(\bal)\cup L_\ell^\star\subset V(\alpha)\cup L_\ell^\star=V(\alpha_\ell^\star)\quad\textrm{and}\quad \hat{L}(\bal_\ell^\star)=(\hat{L}(\bal)\setminus \{\ell\})\cup L_\ell^\star\subset (L(\alpha)\setminus\{\ell\})\cup L_\ell^\star=L(\alpha_\ell^\star),\]
which proves $(1)$. Let $u,v\in V(\bal_{\ell}^\star)$ with $u\prec_{\bal_\ell^\star}^{} v$. Assume first that $v\in V(\bal)$. In this case, $u\in V(\bal)$ and since $\bal$ is a subtree of $\bal_{\ell}^\star$, we have $u\prec_{\bal}^{} v$. Using that $\bal$ is an admissible pruning of $\alpha$, we infer that $u\prec_{\alpha}^{} v$. Since $\alpha$ is a subtree of $\alpha_{\ell}^\star$, it follows that $u\prec_{\alpha_\ell^\star}^{} v$. Assume now that $v\in L_\ell^\star$. Using the previous argument for $u$ and $\ell$, we infer that $u\prec_{\alpha_\ell^\star}^{} \ell$. Since $v$ is a child of $\ell$ in $\alpha_{\ell}^\star$, it follows that $u\prec_{\alpha_\ell^\star}^{} v$. This achieves the proof of $(2)$. Next, we check that $\bal_\ell^\star$ and $\alpha_\ell^\star$ satisfy $(3)$. Let $c$ be a leaf-type configuration of $\alpha_\ell^\star$. We associate to $c$ the leaf-type configuration $\tilde{c}$ of $\alpha$ by setting $\tilde{c}_l=\cpv{l}$, $l\in L(\alpha)$. Denote by $\hat{c}$ the reduced leaf-type configuration of $\bal_\ell^\star$ given by $\hat{c}_l=c_l$, $l\in \hat{L}(\bal_\ell^\star)$. By construction, we have: (i) the type of the root of $\alpha_\ell^\star$ under $c$ coincides with the type of the root of $\alpha$ under $\tilde{c}$, and (ii) the type of the root of $\bal_\ell^\star$ under $\hat{c}$ coincides with the type of the root of $\bal$ under $(\chv{l})_{l\in\hat{L}(\bal)}$. Note that, for $l\in \hat{L}(\bal)$ with $l\neq \ell$, we have $\tilde{c}_l=c_l=\hat{c}_l=\chv{l}$, and $\tilde{c}_\ell=\cpv{\ell}=\chv{\ell}$. Thus $(\chv{l})_{l\in\hat{L}(\bal)}$is the restriction of $\tilde{c}$ to $\hat{L}(\bal)$. The result follows using that $\bal$ is an admissible pruning of $\alpha$.
\smallskip

\underline{Proof of $(b)$ for $\times$ and $\circ$}. Fix $\ell\in \hat{L}(\bal)$. We will show  that $\pi_\ell^\star(\bal)$ and $\alpha_\ell^\star$ satisfy conditions $(1)$, $(2)$ and $(3)$ in Definition \ref{adp}. Note that by construction $\ell\notin\hat{L}(\pi_\ell^\star(\bal))$. Thus, since $\bal$ is an admissible pruning of $\alpha$, it follows from the definition of $\pi_\ell^\star(\bal)$ and $\alpha_\ell^\star$ that
$$V(\pi_\ell^\star(\bal))\subset V(\bal)\subset V(\alpha)\subset V(\alpha_\ell^\star)\quad\textrm{and}\quad \hat{L}(\pi_\ell^\star(\bal))\subset \hat{L}(\bal)\setminus\{\ell\}\subset {L}(\alpha)\setminus\{\ell\}\subset L(\alpha_\ell^\star),$$
which proves $(1)$. Next, we prove $(2)$ for $\star=\times$. Let $u,v\in V(\pi_{\ell}^\times(\bal))$ with $u\prec_{\pi_\ell^\times(\bal)}^{} v$. We claim that $u\prec_{\bal}^{} v$. If the claim is true, then $u\prec_{\alpha_\ell^\times}^{} v$, because $\bal$ is an admissible pruning of $\alpha$, which is a subtree of $\alpha_\ell^\times$. Thus, to prove $(2)$ for $\star=\times$, it suffices to prove the claim. If $\ell$ is a firewall, the claim follows, because $\pi_\ell^\times(\bal)$ and $\bal$ have the same tree structure. Assume now that $\ell$ is not a firewall. If $w_\ell(\bal)\nprec_\bal v$ or if $w_\ell(\bal)_\bal \prec_\bal u$, then the directed path from $u$ to $v$ in $\pi_\ell^\times(\bal)$ is also a directed path in $\bal$, and the claim follows in this case. By construction of $\pi_\ell^\times(\bal)$, the only remaining possibility is that $u\prec_\bal w_\ell(\bal)$ and $w_\ell(\bal)\prec_\bal v$, in which case the claim is also true, which ends the proof of $(2)$ for $\star=\times$. Let us now prove $(2)$ for $\star=\circ$. Let $u,v\in V(\pi_{\ell}^\circ(\bal))$ with $u\prec_{\pi_\ell^\circ(\bal)}^{} v$ and denote by $\chil{v}_\ell$ the left child of the parent of $\rho(R_\ell(\bal))$. If $\chil{v}_\ell$ has no mark, it follows from the definition of $\pi_\ell^\circ (\bal)$ that $u\prec_{\bal}^{} v$. Since $\bal$ is a subtree of $\bal_\ell^\circ$, we have $u\prec_{\bal_\ell^\circ}^{} v$. For the remaining case, i.e. when $\chil{v}_\ell$ has mark $\times$, note that $\pi_\ell^\circ(\bal)=\pi_{\chil{v}_\ell}(\pi_\ell^\circ(\bal_1))$, where $\bal_1$ is obtained from $\bal$ by removing the mark of $\chil{v}_\ell$. Therefore, the result follows by combining the previous case and the result for $\star=\times$. 
\smallskip

Finally, we show that $\pi_\ell^\star(\bal)$ and $\alpha_\ell^\star$ satisfy $(3)$. Let $c$ be a leaf-type configuration of $\alpha_\ell^\star$. Thanks to Lemma~\ref{elprop}, the types of the roots of $\alpha_\ell^\star$ and $\alpha$ are equal under $c$ and $(\cpv{l})_{l\in L(\alpha)}$, respectively. This type coincides with the type of the root of $\bal$ under $(\cpv{l})_{l\in \hat{L}(\bal)}$, because $\bal$ is an admissible pruning of $\alpha$. Moreover, 
$\cpv{\ell}=0$ if $\star=\circ$, and $\cpv{\ell}=1$ if $\star=\times$. Therefore, using Lemma \ref{typ-prun}, we conclude that the type of the root of $\bal$ under $(\cpv{l})_{l\in \hat{L}(\bal)}$ agrees with the type of the root of $\pi_\ell^\star(\bal)$ under $(\cpv{l})_{l\in \hat{L}(\pi_\ell^\star(\bal))}$, achieving the proof.
\end{proof}
\begin{proof}[Proof of Corollary \ref{dualpASG}]
Let $\bal\in\Xi^\pA$ and define $\asg\in{\Xi}$ by adding a child without a mark to every marked leaf of $\bal$. Next, set $a_0=\alpha$ and $\ba_0=\bal$, and construct the eASG and the pASG processes $(a_t)_{t\geq 0}$ and $(\ba_t)_{t\geq 0}$ via the coupling described in Section~\ref{sec:mainresults:subsec:pASG} below Definition~\ref{def:pruningoperations}. Clearly $\bal$ is an admissible pruning of $\alpha$. Thus, Lemma~\ref{admpru} implies that, for all $t\geq0$, $\ba_t$ is an admissible pruning of $a_t$. Therefore, $H(a_t,y_0)=H(\ba_t,y_0)$ for all $t\geq 0$ and $y_0\in[0,1]$, and the result follows from Theorem~\ref{sec:mainresults:thm:dualityASG}.
\end{proof}

We close this section with a result that will be useful in the next subsection. It can be summarised by saying that the type of the root of a region within a pruned tree is unfit if and only if all the vertices in the region are unfit; and if the vertices outside of the region are ignored, then the root-type of the region is unfit if and only if all leaves of the region are unfit. To this end, for $\bal\in \Xi^{\pA}$, a reduced leaf-type configuration~$\hat{c}$ of~$\bal$, and a region~$R$ of $\bal$, define for $v\in R$, $\chr_v(R)$ as the vertex type of $v$ that arises under the leaf-type configuration~$\hat{c}$ if all vertices outside of~$R$ are ignored; that is, if $\deg(v)=1$ in $R$, then $\chr_v(R)$ is determined by the type of the child of~$v$ in $R$.

\begin{lem}\label{lem:roottype}
	Let $\bal\in \Xi^{\pA}$, a reduced leaf-type configuration~$\hat{c}$ of~$\bal$, and a region~$R$ of $\bal$. Then, 
	\begin{equation}
		\chr_{\rho(R)}(R)=\prod_{v\in R} \chr_v(R)=\prod_{\ell\in \hat{L}(\bal)\cap R} \hat{c}_\ell.\label{eq:observ}
	\end{equation}
	
\end{lem}
\begin{proof}
	This follows by iterating Definition~\ref{typro}-(3).
\end{proof}

\subsection{The sASG: results related to Section~\ref{sec:mainresults:subsec:sASG}}\label{sec:proofssASG}
In this section we prove the results related to the stratification. 

\smallskip

The following lemma makes precise in what sense the root type of a pruned tree agrees with the output type of its stratification; this leads to~\eqref{root=winnertype}.
\begin{lem}\label{out}
	Let $\bal \in \Xi^{\pA}$ with $\bal\neq\rootben$, $\hat{c}=(\hat{c}_\ell)_{\ell\in \hat{L}(\bal)}$ a reduced leaf-type configuration of $\bal$, and $s(\bal)=(\tau,m)\in \Upsilon$ the stratification of $\bal$. Define the primary-vertex configuration $\tilde{c}=(\tilde{c}_v)_{v\in V^1(\tau)}$ via
	\begin{equation}
		\tilde{c}_{R}\defeq\prod_{\ell\in \hat{L}(\bal)\cap R} \hat{c}_\ell,\quad\textrm{ for any region $R$ of $\bal$}.\label{eq:pvconfig}
	\end{equation}
	Then $\chr_{\rho(\bar{\alpha})}=\tf(\tilde{c},\tau)$. In particular, $H(\bal,y_0)=\Hs(s(\bal),y_0)$. 
\end{lem}
\begin{proof}
	We prove the lemma via induction on the number~$k$ of ternary branchings in $\bal$. Suppose $k=0$ and let $\hat{c}$ be a reduced leaf-type configuration of $\bal$. Because $k=0$, $\bal$ only has one region~$R$, and hence $s(\bal)=(\tau,m)$ is the tripod tree consisting only of one primary vertex $R$; $R$ is the root of $\tau$, and there are no secondary vertices. By the definition of the output type and~\eqref{eq:pvconfig}, $\tf(\tilde{c},\tau)=\tilde{c}_{R}=\prod_{v\in \hat{L}(\bal)\cap R} \hat{c}_v$. If $k=0$, either~(i) $\bal=\rootdel$, (ii)~$\bal=\rootsin$, or (iii)~$\bal$ contains no marked vertex and has only binary branchings. In~(i), $\chr_{\rho(\bar{\alpha})}=1$, which agrees with $\tf(\tilde{c},\tau)$ because $\hat{L}(\bal)\cap R=\emptyset$ and the empty product is~$1$. In~(ii), the result follows by Definition~\ref{typro} (1). In~(iii),  \eqref{eq:observ} leads to $\chr_{\rho(\bal)}=\prod_{\ell\in \hat{L}(\bal)\cap R} \hat{c}_\ell$, which also agrees with~$\tf(\tilde{c},\tau)$. Next, assume $k=n$ for some $n\in \N$. Let $\hat{c}$ be the reduced leaf-type configuration of the new $\bal$, and $s(\bal)=(\tau,m)$. Recall that for $v\in V(\bal)$, $R_v(\bal)$ is the region in~$\bal$ containing~$v$. Let $v_1,\ldots, v_\ell$ be the vertices in $R_{\rho(\bal)}(\bal)$ that have outdegree~$3$ in $\bal$, and let for $i\in[n]$, $\bal_{\chim{v}}$ and $\bal_{\chir{v}}$ be the subtrees in $\bal$ rooted in $\chim{v}$ and $\chir{v}$, respectively. By iterating Definition~\ref{typro}, and~\eqref{eq:observ}, 
	\begin{equation}
		\hat{c}_{\rho(\bal)}=\prod_{v\in R} \chr_v = \prod_{v\in R} \chr_v(R_{\rho(\bal)}(\bal)) \prod_{i=1}^n \max\{\chr_{\chim{v}_i},\chr_{\chir{v}_i}\}=\! \! \! \! \! \! \! \! \prod_{\ell\in \hat{L}(\bal)\cap R_{\rho(\bal)}(\bal)} \! \! \! \! \! \! \! \! \hat{c}_\ell \ \ \prod_{i=1}^n \max\{\chr_{\chim{v}_i},\chr_{\chir{v}_i}\}.\label{eq:auxvt}
	\end{equation}
	We make the following three observations. 1) For $i\in [n]$, by the definition of the stratification, $v_i$ is the child of $\rho(\tau)$ in $\tau$, and $R_{\chim{v}_i}(\bal)$ and $R_{\chir{v}_i}(\bal)$ are the children of $v_i$ in $\tau$. Moreover, $s(\bal_{\chim{v}_i})=(\tau_{R_{\chim{v}_i}(\bal)},m_{R_{\chim{v}_i}(\bal)})$ is the subtree of~$\tau$ at $R_{\chim{v}_i}(\bal)$; analogous for $\bal_{\chir{v}}$. 2) $\chr_{\chim{v}_i}=\chr_{\rho(\bal_{\chim{v}_i} ) }$; but $\bal_{\chim{v}_i}\in \Xi^{\pA}$ has less than $n$ vertices with outdegree~$3$ so that by the induction hypothesis, $\chr_{\rho(\bal_{\chim{v}_i} ) }=\tf(\tilde{c},\tau_{R_{\chim{v}_i}(\bal)})$; analogous for $\chr_{\chir{v}_i}$. 3) $R_\rho(\bal)=\rho(\tau)$ so that by~\eqref{eq:pvconfig}, $\prod_{\ell\in \hat{L}(\bal)\cap R_{\rho(\bal)}(\bal)}\hat{c}_\ell=\tilde{c}_{\rho(\tau)}$. Hence, continuing~\eqref{eq:auxvt} using observations 2), 3), and 1),
	\begin{align*}
		\hat{c}_{\rho(\bal)}&=\! \! \! \! \! \! \! \! \prod_{\ell\in \hat{L}(\bal)\cap R_{\rho(\bal)}(\bal)} \! \! \! \! \! \! \! \! \hat{c}_\ell \ \ \prod_{i=1}^n \max\{\tf(\tilde{c},\tau_{R_{\chim{v}_i}(\bal)}),\tf(\tilde{c},\tau_{R_{\chir{v}_i}(\bal)})\} \\
		&=\tilde{c}_{\rho(\tau)} \prod_{i=1}^n \max\{\tf(\tilde{c},\tau_{R_{\chim{v}_i}(\bal)}),\tf(\tilde{c},\tau_{R_{\chir{v}_i}(\bal)})\}= \tilde{c}_{\rho(\tau)} \! \! \! \! \! \!\prod_{v \text{ child of }\rho(\tau)} \max \{\tf(\tilde{c},\tau_w):w\text{ is child of }v\}.
	\end{align*}
This finishes the proof.
\end{proof}

\begin{proof}[Proof of Theorem~\ref{thm:duality} (duality sASG)]	Fix $t\geq 0$, $y_0\in [0,1]$, and~$\Ts\in \Upsilon_\Delta$. It is straightforward to construct $\bar{\asg}\in \Xi^\pA$ such that~$s(\bar{\asg})=\Ts$. Let $(\ba_r)_{r\geq 0}$ be a pASG process with $\ba_0=\bar{\asg}$. By~\eqref{root=winnertype} (alternatively Lemma~\ref{out}), Theorem~\ref{dualpASG}, and the definition of~$\Te$, 
	$$\Hs(\Ts,y(t;y_0))=H(\bar{\asg},y(t;y_0))=\E_{\bar{\asg}}[H(\ba_t,y_0)]=\E_{\bar{\asg}}[\Hs(s(\ba_t),y_0)]=\E_\Ts [\Hs(\Te_t,y_0)].$$
	In particular, $y(t;y_0)=\E_{\ssroot{1}} [\Hs(\Te_t,y_0)]$ follows because $\Hs(\sroot{1},y(t;y_0))=y(t;y_0)$ by the definition of~$\Hs$. 
\end{proof}
\begin{lem}\label{tran-sasg}
	Let $\bal\in\Xi^\pA$. For any $\ell\in\hat{L}(\bal)$, we have
	\begin{enumerate}
		\item $s(\bal_\ell^\nwedge)=(s(\bal))_{R_\ell(\bal)}^\nwedge$,
		\item $s(\bal_\ell^\pitch)=(s(\bal))_{R_\ell(\bal)}^\pitch$,
		\item $s(\pi_\ell^\times(\bal))=(s(\bal))_{R_\ell(\bal)}^\times$,
		\item $s(\pi_\ell^\circ(\bal))=(s(\bal))_{R_\ell(\bal)}^\circ$.
	\end{enumerate}
\end{lem}
\begin{proof}
	(1)--(4) can all be proven along the same lines. Here, we only prove (2) and (3), because they are the more delicate cases for branching and pruning, respectively. Let $\bal\in \Xi^\pA$ and assume $\hat{L}(\bal)\neq \varnothing$. Let $\ell\in\hat{L}(\bal)$. First we prove (2). We write $\bal^{\pitch}$ for $\bal^{\pitch}_\ell$ to ease the notation. $\bal^{\pitch}$ arises from $\bal$ by adding leaves $\chil{\ell}$, $\chim{\ell}$, and $\chir{\ell}$ (with parent~$\ell$) to $\bal$. Hence, $R_{\ell}(\bal^{\pitch})=R_{\ell}(\bal)\cup\{\chil{\ell}\}$, $R_{\chim{\ell}}(\bal^{\pitch})=\{\chim{\ell}\}$, and $R_{\chir{\ell}}(\bal^{\pitch})=\{\chir{\ell}\}$; all other regions are as in $\bal$. In particular, in comparison with $s(\bal)=(\tau,m)$, $s(\bal^{\pitch})=(\tau^1,m^1)$ has an additional secondary vertex~$\ell$, which is the child of $R_\ell(\bal^{\pitch})$, and two additional primary vertices $R_{\chim{\ell}}(\bal^{\pitch})$ and $R_{\chir{\ell}}(\bal^{\pitch})$, which are connected to~$\ell$. The new weights are $m^1(R_{\ell}(\bal^{\pitch}))=m(R_{\chim{\ell}}(\bal))+1$, $m^1(R_{\chim{\ell}}(\bal^{\pitch}))=m^1(R_{\chir{\ell}}(\bal^{\pitch}))=1$, and all other weights are as in $s(\bal)$. But by Definition~\ref{def:sASGoperations}-(3), this is $s(\bal)^{\pitch}_\ell$, because $s(\bal)^{\pitch}_{R_\ell(\bal)}$ arises from $s(\bal)$ by increasing the weight of $R_{\ell}(\bal)$ by one, adding a secondary vertex as a child of $R_{\ell}(\bal)$, and adding to that secondary vertex two children that are primary vertices each with weight~$1$. In particular, $s(\bal_\ell^\pitch)=(s(\bal))_{R_\ell(\bal)}^\pitch$. 
		
	\smallskip
	
	Next, we prove (3). Recall Definition~\ref{def:firewall} (firewall), and that for~$\ell\in L(\bal)$, $w_\ell(\bal)$ is the most recent ancestor of~$\ell$ that is a firewall. For the proof we write $\bal^\times\defeq \pi_\ell^\times(\bal)$ to ease the notation. To determine $\bal^\times$, by Definition~\ref{def:pruningoperations}, we have to distinguish i) $\ell$ is a firewall, ii) $\ell$ is not a firewall and $\deg(w_\ell(\bal))=2$, and iii) $\ell$ is not a firewall and $\deg(w_\ell(\bal))=3$. In i), $\ell$  is either the root, or the parent~$w$ of~$v$ has $\deg(w)=3$ and $\chil{w}=v$. In both cases, $\bal^\times$ is obtained by marking~$\ell$ with $\times$. In particular, the tree structure is as in $\bal$, but $\lvert R_\ell(\bal^\times)\rvert=\lvert R_\ell(\bal^\times)\rvert-1$. Hence, $s(\bal^\times)=(\tau,m^1)$, where $m^1(R_\ell(\bal^\times))=m(R_\ell(\bal))-1$. Now we check the corresponding operation on the stratification. If $\ell$ is the root, then $R_\ell(\bal)$ is the root of~$\tau$ by definition. Hence, $s(\bal)^\times_{R_\ell(\bal)}=(\tau,m^2)$ with $m^2(R_\ell(\bal^\times))=m(R_\ell(\bal))-1$, by Definition~\ref{def:sASGoperations}-(iii). 
	If~$\ell$ has parent~$w$ with $\deg(w)=3$ and $v=\chil{w}$, then $v$ has siblings $\chim{w}$ and $\chir{w}$. $R_\ell(\bal)$, $R_{\chim{w}}(\bal)$, and $R_{\chir{w}}(\bal)$ are non-empty. By Definition~\ref{def:stratmap}, $R_{\chim{w}}(\bal)$ and $R_{\chir{w}}(\bal)$ are children of $w$ in $\tau$; and $w$ is a child of $R_{v}(\bal)$ in $\tau$. In particular, $R_{v}(\bal)$ is not a leaf of $\tau$ so that by Definition~\ref{def:sASGoperations}, $s(\bal)^\times_{R_\ell(\bal)}=(\tau,m^2)$ with $m^2(R_\ell(\bal))=m(R_\ell(\bal))-1$. In both cases, $s(\bal^\times)=s(\bal)^\times_{R_\ell(\bal)}$. 
	Next, we consider ii) $\ell$ is not a firewall and $\deg(w_\ell(\bal))=2$. Then either iia) the parent of~$\ell$ has outdegree~$2$, or iib) the vertices between $\ell$ and $w_\ell(\bal)$ are the middle or right child in a ternary branching or have outdegree~$3$ with a marked left child. In iia), $\ell$ and $w_\ell(\bal)$ are in the same region; and $R_{w_\ell(\bal)}(\bal^\times)=R_{w_\ell(\bal)}(\bal)\setminus\{\ell\}$. In particular, $s(\bal^\times)=(\tau,m^1)$, where $\tau$ is as before, but $m^1(R_{w_\ell(\bal)}(\bal^\times))=m(R_{w_\ell(\bal)}(\bal))-1$. Because $R_{w_\ell(\bal)}(\bal)$ contains at least two leaves, $m(R_{w_\ell(\bal)}(\bal))>0$. Hence, $s(\bal)^\times_{R_\ell(\bal)}=(\tau,m^2)$ with $m^2(R_{w_\ell(\bal)}(\bal))=m(R_{w_\ell(\bal)}(\bal))-1$. This proves (3) under iia). In iib), for any $z$ such that $w_\ell(\bal)\prec_{\bal} z \prec_{\bal} \ell$, $z$ is the middle or right child in a ternary branching, $\deg(z)=3$, and $\chil{z}$ is marked. In particular, for every such $z$, $R_z(\bal)$ contains only marked leaves. W.l.o.g. we can assume $\chil{w}_\ell(\bal)$ is not on the path from $w_\ell(\bal)$ to~$\ell$ (otherwise replace $\chil{w}_\ell(\bal)$ by $\chir{w}_\ell(\bal)$). $\bal^\times$ is obtained from $\bal$ by replacing the subtree rooted in $w_\ell(\bal)$ by the subtree rooted in $\chil{w}_\ell(\bal)$ . In particular, all regions in $\bal_{\chir{w}_\ell}$, the subtree in~$\bal$ rooted in ${\chir{w}_\ell}$, are not regions of $\bal^\times$, and all regions in $\bal\setminus \bal_{\chir{w}_\ell}(\bal)$ are as before. In $s(\bal)$, we identify $R_{w_\ell(\bal)}(\bal)$ with the closest primary-vertex ancestor in $\tau$ with positive weight, and $w_\ell(\bal)$ as the child of $R_{w_\ell(\bal)}(\bal)$. Hence, $s(\bal^\times)$ arises from $s(\bal)$ by removing in $\tau$ the subtree rooted in $w_\ell(\bal)$. But the resulting tree is then $s(\bal)^\times_\ell$. To see this, note that for $z$ such that $w_\ell(\bal)\prec_{\bal} z \prec_{\bal} \ell$,  $R_z(\bal)\cap\hat{L}(\bal)=\varnothing$; but $R_\ell(\bal)=1$. By Definition~\ref{def:sASGoperations}-(3) and because $m(R_\ell(\bal))-1=0$, we remove the subtree rooted in the closest secondary-vertex ancestor of $R_\ell(\bal)$ (in $\tau$) that is a child of a primary vertex with positive weight. Every $R_z(\bal)$ is an ancestor, but $m(R_z(\bal))=0$; the first ancestor with positive weight is $R_{w_\ell(\bal)}(\bal)$. For case iii), all vertices between $\ell$ and $w_\ell(\bal)$ are the middle or right child in a ternary branching or have outdegree~$3$ with a marked left child. This case can be proven as case~iib).
\end{proof}

\begin{coro}
	The sASG process $(\Tp_t)_{t\geq0}$ is a continuous-time Markov chain with values in $\Upsilon_\Delta$
	and transition rates
	\begin{align*}
		q_{\Te}^{}(\Ts,\Ts^{\nwedge}_{v})\defeq s\,m(v),\quad  q_{\Te}^{}(\Ts,\Ts^{\pitch}_{v})\defeq\gamma\, m(v),\quad q_{\Te}^{}(\Ts,\Ts^{\times}_{v})\defeq u\nu_1\, m(v),\quad q_{\Te}^{}(\Ts,\Ts^{\circ}_{v})\defeq u\nu_0\, m(v),
	\end{align*}
	for~$\Ts=(\tau,m)\in \Upsilon$ and~$v\in V^1(\tau)$. The states~$\sroot{0}$ and~$\Delta$ are absorbing. 
\end{coro}
\begin{proof}
	This follows directly from Lemma \ref{tran-sasg}.
\end{proof}

\section{Long-term type frequency: proofs and additional results related to Section~\ref{sec:mainresults:subsec:application}}\label{sec:applicationIproofs}
In this section, we prove results that rely on the connection between the solution of the mutation--selection equation~\eqref{eq:dlimitdiffeq} and the sASG process.
We begin with proving Proposition~\ref{prop:treegrowsinfty}.
\begin{proof}[Proof of Proposition~\ref{prop:treegrowsinfty}]
	We claim that every $\Ts\in \Upsilon$ with $\Ts\neq\sroot{0}$ is transient. 
	If this  is true, the result follows, because for each $n\in\Nb$, $\{\Ts\in \Upsilon: M(\Ts)\leq n\}$ is a finite set. Let $\Ts\in \Upsilon$ with $\Ts\neq\sroot{0}$. Denote by $T_\Ts$ the first return time to~$\Ts$ of $\Te$ after its first jump. Note that before absorption the total mass $M(\Te_{\cdot})$ decreases at any jump with probability $u/(u+s+\gamma)$. Hence,
	$$\Pb_\Ts(T_\Ts=\infty)\geq \Pb_\Ts(T_{\mathrm{abs}}<T_\Ts)\geq \left(\frac{u}{u+s+\gamma}\right)^{M(\Ts)}>0,$$
	which proves the claim.
\end{proof}

We will prove that the output type is monotone in the primary vertex-type configuration.
\begin{lem}[Monotone output type] \label{lem:monotone}
For $\tau\in \Xi^\trip$, and $c$ and $c'$ two primary vertex-type configurations with $c_v\leq c_v'$ for all $v\in V^1(\tau)$, we have $\tf(c,\tau)\leq \tf(c',\tau)$. Moreover, if $c\equiv 0$ and $c'\equiv 1$, $\tf(c,\tau)=0$ and $\tf(c',\tau)=0$.
\end{lem}
\begin{proof}
	We prove the lemma by induction on the number of primary vertices. For $\tau$ with $\lvert V^1(\tau)\rvert=1$, the result is trivially true. For $\lvert V^1(\tau)\rvert=n+1$, assume $c_v\leq c_v'$ for all $v\in V^1(\tau)$. Using the definition of $\tf(c,\tau)$ and the induction hypothesis,
	\begin{align*}
		\tf(c,\tau)&=c_{\rho(\tau)} \prod_{v\,\text{child of }\rho(\tau) } 
	\max\left\{\tf(c,\tau_{w}):\, w\, \text{ is child of }v\right\}\\
	&\leq c_{\rho(\tau)}' \prod_{v\,\text{child of }\rho(\tau) } 
	\max\left\{\tf(c',\tau_{w}):\, w\, \text{ is child of }v\right\}.
	\end{align*}
The second part of the lemma can be proven along the same lines.
\end{proof}
A direct consequence of Lemma~\ref{lem:monotone} and the definition of~$\Hs$ is that for $\Ts\in \Upsilon$ with $\Ts\neq \sroot{0}$, \begin{equation}
	\partial_2 \Hs(\Ts,y_0)>0.\label{eq:monotoneH}
\end{equation}
We now deduce the long-term behaviour of $\Hs(\Te_t,y_0)$.
\begin{lem}[Sub-/super-/martingale]\label{lem:martingale}
	Let~$y_0\in [0,1]$, $F$ as defined in~\eqref{eq:dlimitdiffeq}, and $\Fs^{\Te}=(\Fs_r^{\Te})_{r\geq 0}$ be the natural filtration of~$\Te$. $(\Hs(\Te_r,y_0))_{r\geq 0}$ is a non-negative, bounded $\Fs^{\Te}\overset{\text{sub}}{\underset{\text{super}}{-}}$martingale if $F(y_0)\gtreqqless 0$. Moreover, for any $y_0\in [0,1]$, $\Hs_{\infty}(y_0)\defeq\lim_{r\to\infty} \Hs(\Te_r,y_0)\in [0,1]$ exists almost surely. 
\end{lem}
\begin{proof}
	Fix $r\geq 0$ and $y_0\in [0,1]$. Non-negativity and boundedness are immediate, because $\Hs(\cdot,y_0)\in[0,1]$. $\Hs(\Te_r,y_0)$ is measurable with respect to the $\sigma$-algebra generated by~$(\Te_t)_{t\leq r}$, and therefore $(\Hs(\Te_r,y_0))_{r\geq 0}$ is adapted with respect to $\Fs^{\Te}$. Fix $h\in[0,r]$ and $\Ts\in \Upsilon_\Delta$. 
	By the Markov property and Theorem~\ref{thm:duality},
	$$\E_{\Ts}[\Hs(\Te_r,y_0)\mid \Fs_h^{\Te}]=\E_{\Te_h}[\Hs(\Te_{r-h},y_0)]=\Hs(\Te_h,y(r-h;y_0)).$$
	If $\Te_h\in\{\ssroot{0},\Delta\}$, then the right-hand side is independent of $y_0$ so that in particular $\Hs(\Te_h,y(r-h;y_0))=\Hs(\Te_h,y(y_0))$.	If $\Te_h\notin\{\ssroot{0},\Delta\}$, $\Hs$ is monotone in the second argument by~\eqref{eq:monotoneH}. Moreover, if $F(y_0)>0$ (if $F(y_0)<0$), then $y(t;y_0)$ is non-decreasing (non-increasing) for all $t>0$. If $F(y_0)=0$, then $y(t;y_0)\equiv y_0$ for all $t>0$. Hence, $\Hs(\Te_h,y(t-r;y_0))\gtreqqless \Hs(\Te_h,y_0)$ if $F(y_0)\gtreqqless0$. In particular, $(\Hs(\Te_r,y_0))_{r\geq 0}$ is a $(\Fs_r^{\Te})_{r\geq 0}\overset{\text{sub}}{\underset{\text{super}}{-}}$martingale. A straightforward application of Doob's martingale convergence theorem yields that $\Hs_{\infty}(y_0)\in[0,1]$ exists almost surely for any $y_0\in [0,1]$. 
\end{proof}
The following lemma will be useful for a more detailed description of $\Hs_{\infty}(y_0)$.

\begin{lem}\label{lem:disteq}
Let $\Te^{(1)}$, $\Te^{(2)}$, $\Te^{(3)}$, and $\Te^{(4)}$ be four independent sASG processes, each almost surely starting in~$\ssroot{1}$. For $y_0\in [0,1]$ and $i\in[4]$,  let $\Hs_{\infty}^{(i)}(y_0)\defeq\lim_{r\to\infty}\Hs_{\infty}(\Te_r^{(i)},y_0)$. Then, \begin{align}
	(u+s+\gamma)\E\Big[G\big(\Hs_{\infty}^{(1)}(y_0)\big)\, \Big]=& u\nu_1 G(1)+ u\nu_0 G(0)+ s \E\Big[G\big(\Hs_{\infty}^{(2)}(y_0) \Hs_{\infty}^{(3)}(y_0)\big)\,\Big] \nonumber\\
	&+ s \E\Big[G\big(\Hs_{\infty}^{(2)}(y_0) [\Hs_{\infty}^{(3)}(y_0)(1-\Hs_{\infty}^{(4)}(y_0))+\Hs_{\infty}^{(4)}(y_0) ] \big) \Big]\label{eq:disteq}.
\end{align}
\end{lem}
\begin{proof}
	First note that it follows from the construction of $\Te$ that for all bounded $G\in\Cs([0,1])$ and $r\geq 0$, one has $\E_{\ssroot{1}}[G(\Hs(\Te_r,y_0))]=\E_{\rootsin}[G(H(\brea_r,y_0))]$. Now let $\brea$ be an eASG process started in~$\rootsin$, and let~$T$ be the time of the first transition. Note that by Lemma~\ref{lem:martingale}, for any $y_0\in [0,1]$, $\lim_{r\to\infty} H(\brea_r,y_0)=\Hs_{\infty}(y_0)$ in distribution. A first-step decomposition yields 
	\begin{align}
		\E[G(H(&\brea_r,y_0))]\nonumber \\
		=&\P(T\leq r)\E[G(H(\brea_r,y_0))\mid T\leq r]+\P(T>r)G(y_0) \nonumber\\
		=&\P(T\leq r)\Big\{\P(\brea_T=\rootsin_{\rho}^\times)G(1)+\P(\brea_T=\rootsin_{\rho}^\circ)G(0)+\P(\brea_T=\rootsin_{\rho}^\nwedge)\E\Big[G\big(H(\brea_r,y_0)\big)\mid \brea_T=\rootsin_{\rho}^\nwedge,T\leq r\Big]\nonumber\\
		&+\P(\brea_T=\rootsin_{\rho}^\pitch)\E\Big[G\big(H(\brea_r,y_0)\big)\mid \brea_T=\rootsin_{\rho}^\pitch,T\leq r\Big]\Big\}+\P(T>r)G(y_0)\nonumber \\
		=&\frac{\P(T\leq r)}{u+s+\gamma}\Big\{u\nu_1G(1)+u\nu_0G(0)+s\E\Big[G\big(H(\chil{\brea}_{r},y_0)H(\chir{\brea}_r,y_0)\, \big)\mid \brea_T=\rootsin_\rho^\nwedge,T\leq r\Big]\nonumber\\
		&+\gamma\E\Big[G\big(H(\chil{\brea}_r,y_0)[H(\chim{\brea}_r,y_0)(1-H(\chir{\brea}_r,y_0))+H(\chir{\brea}_r,y_0)]\, \big)\mid \brea_T=\rootsin_\rho^\pitch,T\leq r\Big]\Big\}+\P(T>r)G(y_0), \label{eq:hinfdecomp}
	\end{align} where we used Lemma~\ref{Hep}. For the binary branching term, we use the tower property and the independence of $\chil{\brea}_r$ and $\chir{\brea}_r$ to obtain
	\begin{align}
		\E[G(H(\chil{\brea}_{r},y_0)H(\chir{\brea}_r,y_0))\mid \brea_T=\rootsin_\rho^\nwedge,T\leq r]&=\E[ \, \tilde{\E}[G(H(\tilde{\brea}^{(1)}_{r-T},y_0)H(\tilde{\brea}^{(2)}_{r-T},y_0))]\mid T\leq r],\label{eq:hinfsel}
	\end{align} 
where $\tilde{\brea}^{(1)}$ and $\tilde{\brea}^{(2)}$ are two independent eASG processes starting in~$\rootsin$, and $\tilde{\E}$ is the expectation under their joint law. Analogously, we obtain
\begin{align}
	\E[G(H&(\chil{\brea}_r,y_0)[H(\chim{\brea}_r,y_0)(1-H(\chir{\brea}_r,y_0))+H(\chir{\brea}_r,y_0)])\mid \brea_T=\rootsin_\rho^\pitch,T\leq r]\nonumber \\
	=&\E[ \, \tilde{\E}[G(H(\tilde{\brea}^{(1)}_{r-T},y_0)[H(\tilde{\brea}^{(2)}_{r-T},y_0)(1-H(\tilde{\brea}^{(3)}_{r-T},y_0))+H(\tilde{\brea}^{(3)}_{r-T},y_0)]\mid T\leq r],\label{eq:hinfint}
\end{align} 
where $\tilde{\brea}^{(1)}$, $\tilde{\brea}^{(2)}$, $\tilde{\brea}^{(3)}$ are three independent eASG processes started in~$\rootsin$, and $\tilde{\E}$ is the expectation under their joint law with $\tilde{\brea}^{(1)}_0=\tilde{\brea}^{(2)}_0=\tilde{\brea}^{(3)}_0=\rootsin$. Using~\eqref{eq:hinfsel} and~\eqref{eq:hinfint} in \eqref{eq:hinfdecomp}, and then taking $r\to\infty$ yields~\eqref{eq:disteq}.
\end{proof}
In the remaining section, $\Pb$ and $\E$ are to be understood as $\Pb_{\ssroot{1}}$ and $\E_{\ssroot{1}}$, respectively.
\begin{proposition}\label{prop:Hisbernoulli}
	If $y_0\in[0,1]$ is not an unstable equilibrium of~\eqref{eq:dlimitdiffeq} located in $(0,1)$ and if $\Te_0=\ssroot{1}$, then $\Hs_{\infty}(y_0)$ has a Bernoulli distribution with parameter~$y_\infty(y_0)$.
\end{proposition}
\begin{proof}
	Assume $y_0\in [0,1]$ is not an unstable equilibrium of~\eqref{eq:dlimitdiffeq} located in $(0,1)$ and $\Te_0=\sroot{1}$. We now show that $\Hs_{\infty}(y_0)$ has a Bernoulli distribution. Set $E(y_0)\defeq\E[\Hs_{\infty}(y_0)]$ and~$V(y_0)\defeq\E[\Hs_{\infty}(y_0)^2]$. By the duality, \begin{equation}
		E(y_0)=y_{\infty}^{}(y_0),\label{equilibriumaux}
	\end{equation} so that $y_{\infty}(y_0)$ is then the corresponding parameter.
	The idea is to use that if $X$ is a random variable in~$[0,1]$, $X$ has Bernoulli distribution if and only if~$\E[X(1-X)]=0$. In particular, $\Hs_{\infty}(y_0)$ has Bernoulli distribution if and only if $E(y_0)=V(y_0)$. 
	Consider first $y_0\in\{0,1\}$. Then for any $n\in \N$, \begin{align}\label{eq:HS01}
		\E[\Hs_{\infty}(y_0)^n]&=\lim_{r\to\infty} \E[\Hs(\Te_r,y_0)^n]\nonumber\\
		&=\lim_{r\to\infty}\Big( \E[\Hs(\Te_r,y_0)^n\1_{\{T_{\ssroot{0}}<\infty\}}]+\E[\Hs(\Te_r,y_0)^n\1_{\{T_{\Delta}<\infty\}}]+\E[\Hs(\Te_r,y_0)^n\1_{\{T_{\mathrm{abs}}=\infty\}}]\Big)\nonumber\\
		&=\P(T_{\ssroot{0}}<\infty) + y_0\P(T_{\mathrm{abs}}=\infty)
	\end{align}
	where we used Lemma~\ref{lem:monotone} in the last step. In particular, $E(0)=V(0)=\P(T_{\ssroot{0}}<\infty)$ and $E(1)=V(1)=\P(T_{\Delta}=\infty)$. Hence, $\Hs_{\infty}(0)$ and $\Hs_{\infty}(1)$ have Bernoulli distribution. For unstable $y_0\in (0,1)$, we apply Lemma~\ref{lem:disteq} with $G(x)=x(1-x)$. A straightforward calculation then gives
	\begin{align}
		E(y_0)-V(y_0)&=\frac{s}{u+s+\gamma} \big(E(y_0)-V(y_0)\big)\big(E(y_0)+V(y_0)\big)\nonumber\\
		&\quad +\frac{\gamma}{u+s+\gamma}\big(E(y_0)-V(y_0)\big)\big(V(y_0)^2+V(y_0)(2-3E(y_0))+E(y_0)(2-E(y_0))\big),\label{eq:condHfirststep2}
	\end{align}
	Because $\Hs_{\infty}(y_0)\in [0,1]$, we moreover have $$0\leq V(y_0)=\E\big[\Hs_{\infty}(y_0)^2\big]\leq \E\big[\Hs_{\infty}(y_0)\big]=E(y_0).$$
	We want to further narrow down the value of~$V(y_0)$. To do so, we consider equation~\eqref{eq:condHfirststep2} with unknown $x=V(y_0)$, i.e. we rewrite \eqref{eq:condHfirststep2} as $(E(y_0)-x)\,\hat{p}(E(y_0),x)=0,$
	where $$\hat{p}\big(E(y_0),x\big)\defeq s\big(E(y_0)+x\big)+\gamma \big(x^2+x(2-3E(y_0))+E(y_0)(2-E(y_0))\big)-(u+s+\gamma).$$
	Note that $\hat{p}(E(y_0),0)=s(E(y_0)-1)+\gamma(E(y_0)(2-E(y_0))-1)-u\leq -u$ and~$\hat{p}(E(y_0),E(y_0))=F'(E(y_0))\leq 0$, where the inequality follows from~\eqref{equilibriumaux}, together with the fact that  $y_0\in (0,1)$ is not an unstable equilibrium so that $y_{\infty}(y_0)$ is attracting (from at least one side), and therefore, $F'(y_{\infty}^{}(y_0))\leq 0$. In particular, since~$\hat{p}(E(y_0),x)$ is a quadratic polynomial with positive quadratic term in~$x$, $\hat{p}(E(y_0),x)\neq 0$ for all~$x\in [0,E(y_0))$. Altogether, this implies~$V(y_0)=E(y_0)$, and hence $\Hs_{\infty}(y_0)$ has Bernoulli distribution.
\end{proof}
We now provide the proof for the representation of the equilibria of the mutation--selection equation in terms of the sASG process. 
\begin{proof}[Proof of Theorem~\ref{thm:representationequilibrium} (stochastic representation of equilibria)]
	Lemma~\ref{lem:martingale} says that $\Hs_{\infty}(y_0)$ exists almost surely for any $y_0\in [0,1]$. Proposition~\ref{prop:Hisbernoulli} states that if $y_0$ is not an unstable equilibrium located in $(0,1)$, then $\Hs_{\infty}(y_0)$ has Bernoulli distribution with parameter $y_{\infty}(y_0)$. The basic idea to prove~\eqref{eq:yinftyw1} is to take the limit~$t\to\infty$ in the duality relation and then exploit what we already know about~$\Hs_{\infty}(y_0)$.
	First, decompose~$\Hs(\Te_t,y)$ according to $\{T_{\ssroot{0}}<\infty\}$ and $\{T_{\ssroot{0}}=\infty\}$. More precisely, starting from the duality,
	\begin{equation}
		y(t;y_0)=\E[\Hs(\Te_t,y_0)\1_{\{T_{\ssroot{0}}<\infty\}}]+\E[\Hs(\Te_t,y_0)\1_{\{T_{\ssroot{0}}=\infty\}}].\label{eq:auxeq}
	\end{equation}
	Since~$\Hs(\sroot{0},y_0)=1$, the first term on the right-hand side of~\eqref{eq:auxeq} converges to~$\Pb(T_{\ssroot{0}}<\infty)$ as~$t\to\infty$. Because $\Hs(\Delta,y_0)=0$, using Lemma~\ref{lem:martingale}, we deduce that the second term on the right-hand side of~\eqref{eq:auxeq} converges to~$p(y_0)\defeq\E[\1_{\{T_{\mathrm{abs}}=\infty\}}\Hs_{\infty}(y_0)]$ as~$t\to\infty$, thus proving \eqref{eq:yinftyw1}. Next, we establish the connection between the (non-)absorption probability and $\ymin$ ($\ymax$). By Lemma~\ref{lem:monotone} and the definition of~$\Hs$, on~$\{T_{\mathrm{abs}}=\infty\}$, we have $\Hs_\infty(0)=0$ and $\Hs_\infty(1)=1$. Hence, we have $p(0)=0$ and $p(1)=\Pb(T_{\mathrm{abs}}=\infty)$. In particular, \eqref{eq:yinftyw1} with $y_0=0$ (resp. $y_0=1$) yields $\Pb(T_{\ssroot{0}}<\infty)=\ymin$ (resp. $\Pb(T_{\Delta}=\infty)=\ymax$).
\end{proof}
We are now ready to prove Corollary~\ref{coro:refinebernoulli}, which provides a more refined picture of~$\Hs_{\infty}(y_0)$ and its connection to the absorbing states of~$\Te$.
\begin{proof}[Proof of Corollary~\ref{coro:refinebernoulli}]
	First, recall that for all $y_0\in [0,1]$, by the definition of~$\Hs$
	\begin{equation}
		\Hs_{\infty}(y_0)=1\ \, \text{on}\ \,  \{T_{\ssroot{0}}<\infty\};\ \ \text{and} \ \, \Hs_{\infty}(y_0)=0\ \, \text{on} \ \, \{T_{\Delta}<\infty\}.\label{eq:Hinfonabs}
	\end{equation}
	If $\P(T_{\mathrm{abs} }=\infty)=0$, then there is nothing more to prove. Hence, let $\P(T_{\mathrm{abs} }=\infty)>0$.
	Assume~$y_0\in \Attr(\ymin)$. Theorem~\ref{thm:representationequilibrium} then yields $\E[\1_{\{T_{\mathrm{abs} }=\infty\}} \Hs_{\infty}(y_0)]=0$. In particular, $\E[ \Hs_{\infty}(y_0)\mid T_{\mathrm{abs}}=\infty]=0$; combining this with~\eqref{eq:Hinfonabs} proves the result for $y_0\in \Attr(\ymin)$. Next, assume $y_0\in \Attr(\ymax)$. Theorem~\ref{thm:representationequilibrium} gives $\P(T_{\Delta}=\infty)=\P(T_{\ssroot{0}}<\infty)+\E[ \Hs_{\infty}(y_0) \1_{\{T_{\mathrm{abs}}=\infty\}}]$, which implies $1=\E[ \Hs_{\infty}(y_0) \mid T_{\mathrm{abs}}=\infty]$; combining this with~\eqref{eq:Hinfonabs} yields the result also in this case. 
\end{proof}
Next, we prove Proposition~\ref{prop:ycritical}.
\begin{proof}[Proof of Proposition~\ref{prop:ycritical}]
	For $i\in\{0,1\}$, define~ $A_i:=\{y_0\in[0,1]:\, \P(\Hs_{\infty}(y_0)=i\mid T_{\mathrm{abs}}=\infty)=1\}$. Set $z_c:=\sup A_0$ and note that $y_c=\inf A_1$. The definitions of~$\sup$ and~$\inf$ together with the monotonicity of $\Hs_{\infty}(\cdot)$ yield
	$(y_c,1]\subseteq A_1\subseteq [y_c,1]$ and $[0,z_c)\subseteq A_0\subseteq [0,z_c].$
	Since $A_0\cap A_1=\varnothing$, we deduce that $z_c\leq y_c$. Assume first that $z_c<y_c$. Using Theorem~\ref{thm:representationequilibrium}, we infer that for any  $y_0\in(z_c,y_c)$, $\ymin<y_\infty (y_0)<\ymax$ so that there are three equilibria and $y_{\infty}(y_0)$ is stable. But this contradicts the fact that, if there are three equilibria, the middle one is unstable, and thus proves that $z_c=y_c$. Moreover, note that $y_\infty(y_0)=\ymax$ if and only if $y_0\in A_1$. Similarly, $y_\infty(y_0)=\ymin$ if and only if $y_0\in A_0$. Thus, $\ymin\leq y_c\leq \ymax$. It only remains to prove that $y_c$ is an equilibrium. If $y_c=\ymin$ or $y_c=\ymax$, the result is trivially true. For $\ymin<y_c<\ymax$, since $y_\infty(y_0)=\ymin$ for any $y_0\in[\ymin,y_c)$ and $y_\infty(y_0)=\ymax$ for any $y_0\in(y_c,\ymax]$, the result follows from the continuity of $F$.
\end{proof}

\section{Ancestral type distribution: proofs and additional results related to Section~\ref{sec:mainresults:subsec:ancestraltype}}\label{sec:ancestraltypedistriubtionproofs}
This section contains the proofs for the representation(s) of the ancestral type distribution. Moreover, we relate our results to an alternative forward process. Throughout this section we assume $\nu_0=0$.

\subsection{Backward approach to the ancestral type distribution}\label{sec:ancestraltypeinteractive}
First we prove Proposition~\ref{prop:prodoftrees}, which provides a decomposition of~$\Hs_\star$.

\begin{proof}[Proof of Proposition~\ref{prop:prodoftrees}]
	Let $\asg\in \Xi$ without mark~$\circ$. Recall that the immune line of~$\asg$ is the path connecting the root of~$\asg$ to its leftmost leaf, and that $\Fs(\al)=( (\alpha_{\chir{v}_i})_{i=1}^{N(\alpha)},(\alpha_{\chim{w}_j},\alpha_{\chir{w}_j})_{j=1}^{M(\alpha)})$ is the eASG forest of~$\asg$ (with notation adopted from Def.~\ref{def:foreststratifiedASG}). Moreover, recall that if $\nu_0=0$ and $\asg$ contains no mark~$\circ$, the ancestral type of the root of~$\asg$ is~$1$ if and only if the leftmost leaf of~$\asg$ is the ancestral leaf of the root and it is unfit. Fix $y_0\in [0,1]$ and let $C(y_0)$ be a random leaf-type configuration (as defined below Def.~\ref{typro}).
	
	\smallskip
	
	Under $C(y_0)$, the leftmost leaf is unfit with probability~$y_0$. The leaf on the immune line is the ancestral leaf of the root if all internal vertices on the immune line inherit their ancestral leaf from a vertex on the immune line. For $i\in [N(\asg)]$, $v_i$ inherits its ancestral leaf from a vertex on the immune line if and only if $\cpv{\chir{v}_i}=1$,  the probability of which is $H(\asg_{\chir{v}_i},y_0)$. Similarly, for $j\in [M(\asg)]$, $w_j$ inherits its ancestral leaf from a vertex on the immune line if and only if $\cpv{\chim{w}_j}+\cpv{\chir{w}_j}>0$, the probability of which is $[H(\asg_{\chim{w}_j},y_0)+H(\asg_{\chir{w}_j},y_0)-H(\asg_{\chim{w}_j},y_0)H(\asg_{\chir{w}_j},y_0)]$. The result follows from the independence of the corresponding eASGs emerging from the immune line.
\end{proof}

We are now ready to prove Theorem~\ref{thm:ancestraldistributionfinite}.

\begin{proof}[Proof of Theorem~\ref{thm:ancestraldistributionfinite} (representation of the ancestral type distribution)]
	Let $y_0\in [0,1]$ and recall that $g_r(y_0)=\E_{\rootsin}[H_\star(\brea_r,y_0)]$. Let $(\brea_r)_{r\geq 0}$ be an eASG process with $\brea_0=\rootsin$. In this proof, $\brea_{r,v}$ denotes the subtree of $\brea_r$ rooted in~$v\in V(\brea_r)$. For $r\geq 0$, let $\Fs(\brea_r)=( (\brea_{r,\chir{v}_i})_{i=1}^{N(\brea_r)},(\brea_{r,\chim{w}_j},\brea_{r,\chir{w}_j})_{j=1}^{M(\brea_r)})$ be the eASG forest of $\brea_r$. It follows from the definition of $(\brea_r)_{r\geq 0}$ that if $v_i$ is the $i$th vertex with outdegree~$2$ on the immune line of $\brea_r$ for some $r\geq 0$, then $v_i$ is also $i$th vertex on the immune line of $\brea_t$ for all $t\geq r$; the same holds true for vertices with outdegree~$3$ on the immune line. Moreover, $(N(\brea_r))_{r\geq 0}$ and $(M(\brea_r))_{r\geq 0}$ are independent Poisson processes with rate~$s$ and $\gamma$, respectively.
	
	\smallskip
	
	Using the definition of $g_r$, Proposition~\ref{prop:prodoftrees}, and the independence of the eASGs rooted in~$(v_i)_{i=1}^{N(\brea_r)}$ and~$(w_j)_{j=1}^{M(\brea_r)}$, we obtain
	\begin{align}
		g_r(y_0)=&y_0 \, \E\bigg[\prod_{i=1}^{N(\brea_r)} H(\brea_{r,\chir{v}_i},y_0)\bigg]\E\bigg[\prod_{j=1}^{M(\brea_r)}\left[H(\brea_{r,\chim{w}_j},y_0)+H(\brea_{r,\chir{w}_j},y_0)-H(\brea_{r,\chim{w}_j},y_0)H(\brea_{r,\chir{w}_j},y_0)\right]\bigg]\label{eq:mainfactors}.
	\end{align}
	We begin by dealing with the first non-trivial factor in~\eqref{eq:mainfactors}. For $i\in \N$, let $T_{i}^{N}=\inf\{r\geq 0: N(\brea_r)=i\}$. We decompose the term according to the values of~$N(\brea_r)$. The subtrees rooted in $\chir{v}_1,\ldots,\chir{v}_{N(\brea_r)}$ are conditionally independent given $(T_i^N)_{i=1}^{N(\brea_r)}$. Hence, using the tower property of the conditional expectation yields
	\begin{align}
		\E\bigg[\prod_{i=1}^{N(\brea_r)} H(\brea_{r,\chir{v}_i},y_0)\bigg]&=\sum_{n=0}^{\infty}\P(N(\brea_r)=n) \E_n\bigg[\E\bigg[ \prod_{i=1}^n H(\brea_{r,\chir{v}_i},y_0)\mid (T_k^{N})_{k=1}^n\bigg]\bigg],\nonumber \\
		&= \sum_{n=0}^{\infty}\P(N(\brea_r)=n) \E_n\bigg[\prod_{i=1}^n\tilde{ \E}\left[  H(\tilde{\brea}_{r-T_i^N},y_0)\right]\bigg]\label{eq:1proofancestralthm},
	\end{align} where~$\E_n[\cdot]$ denotes the expectation conditional on~$N(\brea_r)=n$, $(\tilde{\brea}^i)_{i\in \N}$ is a sequence of independent eASG processes started in~$\rootsin$ with $\tilde{\E}$ denoting the corresponding expectation, and $\tilde{\brea}$ is a shorthand for~$\tilde{\brea}^1$. Now, we use the well-known connection between Poisson processes and the uniform distribution. Conditional on~$N(\brea_r)=n$, the jump times of~$N(\brea_r)$ have the same distribution as an ordered independent sample of size~$n$ from the uniform distribution on~$[0,r]$~\citep[Thm.~2.4.6]{norris1998markov}. Let $U_1,\ldots,U_n$ be independent uniformly distributed random variables in~$[0,r]$. Since $\prod_{i=1}^n\tilde{ \E}[  H(\tilde{\brea}_{r-T_i^N},y_0)]$ is a function that is symmetric in the arrival times of the Poisson process, we deduce that \begin{equation}
		\E_n\bigg[   \prod_{i=1}^n \tilde{ \E}[H(\tilde{\brea}_{r-T_i^N},y_0)]\bigg]=\E\bigg[ \prod_{i=1}^n \tilde{\E}\left[  H(\tilde{\brea}_{U_i},y_0)\right]\bigg],\label{eq:2proofancestralthm}
	\end{equation}
	since~$r-U_i$ is again uniform on~$[0,r]$. Moreover, $\tilde{\brea}^1_{U_1},\ldots, \tilde{\brea}^n_{U_n}$ are independent, and hence 
	\begin{align}
		\E\bigg[ \prod_{i=1}^n \tilde{\E}\left[  H(\tilde{\brea}_{U_i},y_0)\right]\bigg]=\left(\E\left[ \tilde{\E}\left[  H(\tilde{\brea}_{U_1},y_0)\right]\right]\right)^n =\bigg(\frac{1}{r}\int_0^r \tilde{\E}\left[  H(\tilde{\brea}_{\xi},y_0)\right] \dd \xi \bigg)^n=\bigg(\frac{1}{r}\int_0^r y(\xi,y_0) \dd \xi \bigg)^n,\label{eq:3proofancestralthm}
	\end{align}
	where we used the duality in Theorem~\ref{sec:mainresults:thm:dualityASG}. Combining~\eqref{eq:2proofancestralthm} and~\eqref{eq:3proofancestralthm} into~\eqref{eq:1proofancestralthm}, and using that~$N(\brea_r)$ is Poisson distributed with parameter~$sr$ yields
	\begin{align*} \E\bigg[\prod_{i=1}^{N(\brea_r)} H(\brea_{r,\chir{v}_i},y_0)\bigg]&=\sum_{n=0}^{\infty} \frac{(sr)^n}{n!}e^{-sr} \bigg(\frac{1}{r}\int_0^r y(\xi,y_0) \dd \xi \bigg)^n=\exp\bigg(-s\int_0^r(1-y(\xi,y_0))\dd \xi \bigg).\end{align*} 
	Next, we consider the second non-trivial factor in \eqref{eq:mainfactors}. For $j\in \N$, let $T_{j}^{M}=\inf\{r\geq 0: M(\brea_r)=i\}$. Applying the same techniques that led to~\eqref{eq:1proofancestralthm} and~\eqref{eq:2proofancestralthm}, we obtain for $m\in \N$,
	\begin{align} \E_m\bigg[\prod_{j=1}^{m}& \left[H(\brea_{r,\chim{w}_j},y_0)+H(\brea_{r,\chir{w}_j},y_0)-H(\brea_{r,\chim{w}_j},y_0)H(\brea_{r,\chir{w}_j},y_0)\right]\bigg]\nonumber \\
		=&\left(\E\left[ \tilde{\E}\left[H(\tilde{\brea}^1_{U_1},y_0)+H(\tilde{\brea}^2_{U_1},y_0)-H(\tilde{\brea}^1_{U_1},y_0)H(\tilde{\brea}^2_{U_1},y_0)\right]\right]\right)^m\nonumber\\
		=&\left(\E\left[ 2\, \tilde{\E}\left[H(\tilde{\brea}_{U_1},y_0)\right]-\left(\tilde{\E}\left[H(\tilde{\brea}_{U_1},y_0)\right]\right)^2 \right]\right)^m \nonumber\\
		=&\left(\E\left[ 2y(U_1;y_0)-y(U_1;y_0)^2  \right]\right)^m, \label{eq:2ndfactor}\end{align}
	where $\E_m$ now is the expectation conditional on $M(\brea_r)=m$, and in the last step we used the duality of Theorem~\ref{sec:mainresults:thm:dualityASG}.
	Decomposing the second non-trivial factor in~\eqref{eq:mainfactors} according to $M(\brea_r)$ and using~\eqref{eq:2ndfactor}, we have
	\begin{align*}
		\E\bigg[\prod_{j=1}^{M(\brea_r)}& \left[H(\brea_{r,\chim{w}_j},y_0)+H(\brea_{r,\chir{w}_j},y_0)-H(\brea_{r,\chim{w}_j},y_0)H(\brea_{r,\chir{w}_j},y_0)\right]\bigg]\\ =&\sum_{m=0}^{\infty} \frac{(\gamma r)^m}{m!}e^{-\gamma r} \bigg(\frac{1}{r}\int_0^r y(\xi;y_0)(2-y(\xi;y_0)) \dd \xi \bigg)^m 
		=\exp\bigg(-\gamma\int_{0}^{r} \big(1-y(\xi;y_0)\big)^2 \dd \xi\bigg).\end{align*}
	Altogether, we obtain~\eqref{eq:ancestraltypedistribution}.
\end{proof}

Having established Theorem~\ref{thm:ancestraldistributionfinite}, more explicit expressions for the ancestral type distribution can be derived. We now provide these additional results. We first consider $\gamma=0$, because this extends the representation of the long-term ancestral type distribution derived in~\citep[Thm.~23]{BCH17} to a finite time horizon.
\begin{proposition}[Ancestral type distribution without interaction] \label{prop.ancestraldistributionfinitehorizonwithoutinteraction}
	Assume~$\nu_0=0$ and~$\gamma=0$. Then,
	\begin{equation}
		g_r(y_0)=y_0\exp\bigg(- s\int_0^r \big ( 1-y(\xi;y_0)  \big ) \dd \xi \bigg),\qquad y_0\in [0,1].\label{eq:ancestraltypedistributionsimple}
	\end{equation}
	In particular,
	\begin{equation}
		g_r(y_0)=\begin{cases} y_0\,\frac{u-s\,y(r;y_0)}{u-sy_0},&\text{if }y_0\in [0,1)\setminus \{\frac{u}{s}\},\\
			y_0 \,\exp({-rs(1-y_0)}),&\text{if }y_0\in \{\frac{u}{s},1\}.\label{eq:ancestraltypedistributionsimple2}
		\end{cases}
	\end{equation}
	Furthermore,~$g_{\infty}(y_0)=\lim_{r\to\infty}g_r(y_0)$ exists and is given as follows. 
	\begin{enumerate}[label=(\roman*)]
		\item If~$s=0$,~$g_{\infty}(y_0)=y_0$ for all~$y_0\in [0,1]$.
		\item If~$u\leq s$,~$g_{\infty}(y_0)=\begin{cases}
			0,&\text{if }y_0\in [0,1),\\
			1,&\text{if }y_0=1.
		\end{cases}$
		\item If~$u>s$,~$g_{\infty}(y_0)=y_0\,\frac{u-s}{u-sy_0}$.
	\end{enumerate}
\end{proposition}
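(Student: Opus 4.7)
The plan is to establish \eqref{eq:ancestraltypedistributionsimple} by a direct Poissonian argument (which amounts to specialising Theorem~\ref{thm:ancestraldistributionfinite} to $\gamma=0$), then evaluate the integral in closed form by viewing the integrand as a logarithmic derivative along the flow of~\eqref{eq:dlimitdiffeq}, and finally pass to the limit $r\to\infty$ using Corollary~\ref{coro:convergencetoequilibrium}. For the first step, note that with $\gamma=\nu_0=0$ the immune line of $a_r$ is decorated only by binary branchings (at rate $s$) and deleterious mutation marks. Mutations on the immune line do not alter $\lambda^c_{v_0}(a_r)$, so the ancestral leaf of the root coincides with the leftmost leaf if and only if, at every binary branching encountered along the immune line at backward time $\xi\in[0,r]$, the eASG rooted in the corresponding incoming branch has an unfit root. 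These subtrees are conditionally independent given their birth times, and each incoming branch emerging at backward time $\xi$ has time horizon $r-\xi$; by Theorem~\ref{sec:mainresults:thm:dualityASG} its root is unfit with probability $y(r-\xi;y_0)$. Conditioning on the Poisson point process $\{T_i\}$ of branching times on the immune line (rate $s$, mean measure $s\, d\xi$ on $[0,r]$) and using the exponential formula for functionals of Poisson processes gives
\[
g_r(y_0) \;=\; y_0\, \E\!\left[\prod_{i=1}^{N_r} y(r-T_i;y_0)\right] \;=\; y_0\exp\!\left(-s\int_0^r\!\big(1-y(r-\xi;y_0)\big)\,d\xi\right),
\]
and the change of variables $\eta=r-\xi$ yields \eqref{eq:ancestraltypedistributionsimple}.

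For the second step, the ODE~\eqref{eq:dlimitdiffeq} with $\gamma=\nu_0=0$ reads $\dot y = (1-y)(u-sy)$, so that whenever $y_0\notin\{u/s,1\}$ one has $1-y(\xi;y_0) = \dot y(\xi;y_0)/(u-s\,y(\xi;y_0))$. Substituting and changing the variable of integration to $y=y(\xi;y_0)$ gives
\[
-s\int_0^r\!\big(1-y(\xi;y_0)\big)\,d\xi \;=\; \int_{y_0}^{y(r;y_0)}\!\frac{-s\,dy}{u-sy}\;=\;\ln\!\frac{u-s\,y(r;y_0)}{u-sy_0},
\]
yielding the first line of \eqref{eq:ancestraltypedistributionsimple2}. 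When $y_0\in\{u/s,1\}$, $y_0$ is an equilibrium of the flow, hence $y(\xi;y_0)\equiv y_0$ and the integral reduces to $-rs(1-y_0)$, giving the second line.

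For the limits in (i)--(iii), I would read off $y_\infty(y_0)=\lim_{r\to\infty} y(r;y_0)$ from Corollary~\ref{coro:convergencetoequilibrium} (with $\gamma=0$, so the relevant facts are those of Proposition~\ref{prop:equilibriumpoints} and Fig.~\ref{fig:plots_l_gamma}, left panel). Case (i) is immediate since the integrand in \eqref{eq:ancestraltypedistributionsimple} vanishes. In case (ii) $0<u\leq s$, for $y_0\in[0,1)\setminus\{u/s\}$ we have $y(r;y_0)\to \min\{u/s,1\}$, so $u-s\,y(r;y_0)\to 0$ and the first line of \eqref{eq:ancestraltypedistributionsimple2} forces $g_r(y_0)\to 0$; for $y_0=u/s$ (only relevant if $u<s$) the second line gives $g_r(u/s)=(u/s)e^{-r(s-u)}\to 0$; and $g_r(1)=1$ for all $r$. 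In case (iii) $u>s$, $y(r;y_0)\to 1$ for every $y_0\in[0,1]$, and the first line of \eqref{eq:ancestraltypedistributionsimple2} tends to $y_0(u-s)/(u-sy_0)$; note $u-sy_0>u-s>0$ so no singularity arises.

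The only mildly delicate point is the handling of the exceptional initial data $y_0\in\{u/s,1\}$, where the substitution $1-y=\dot y/(u-sy)$ collapses because $\dot y\equiv 0$; this is bypassed by evaluating the integral directly along the constant trajectory. Beyond this, the proof is essentially bookkeeping: the conceptual content is concentrated in the identification of the ancestral leaf via the immune line and in the Poisson compensator identity, while the remaining steps are a change of variables in a separable ODE and a case analysis on the sign of $u-s$.
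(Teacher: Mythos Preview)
Your proof is correct and follows essentially the same route as the paper. The only cosmetic difference is that the paper obtains \eqref{eq:ancestraltypedistributionsimple} by citing Theorem~\ref{thm:ancestraldistributionfinite} (proved immediately before), whereas you reproduce that argument in the special case $\gamma=0$ via the exponential formula for Poisson functionals; the paper's proof of Theorem~\ref{thm:ancestraldistributionfinite} does the equivalent computation by conditioning on $\mathcal{N}_r=n$ and using that the arrival times are distributed as unordered uniforms. The integral evaluation by the substitution $\eta=y(\xi;y_0)$ and the limit analysis via Corollary~\ref{coro:convergencetoequilibrium} match the paper exactly.
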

\begin{proof}
	We just proved that~\eqref{eq:equalityregioncodingancestralcoding} holds for~$\gamma\geq 0$. If $\gamma=0$, \eqref{eq:equalityregioncodingancestralcoding} reduces to~\eqref{eq:ancestraltypedistributionsimple}. Given~\eqref{eq:ancestraltypedistributionsimple}, \eqref{eq:ancestraltypedistributionsimple2} follows by standard integration techniques. To see this, consider~$y_0<u/s$. Then~$y(r;y_0)$ is increasing and hence \begin{align*}
		-s\int_0^r \big ( 1-y(\xi;y_0) \big ) \dd \xi=\int_{y_0}^{y(r;y_0)}\frac{-s}{u-s \eta}\dd \eta=\ln\left(\frac{u-s y(r;y_0)}{u-s y_0}\right),
	\end{align*} where we substituted~$y(\xi;y_0)=\eta$ and used the differential equation for $y$. Together with~\eqref{eq:ancestraltypedistributionsimple}, this leads to~\eqref{eq:ancestraltypedistributionsimple2} in this case. We can proceed similarly for~$y_0\in (u/s,1)$. For~$y_0\in \{u/s,1\}$, one has~$y(r;y_0)\equiv y_0$ and the result follows from~\eqref{eq:ancestraltypedistributionsimple}. $(i)$--\,$(iii)$ are a consequence of~\eqref{eq:ancestraltypedistributionsimple2} and the form of~$y_{\infty}(y_0)$ from Corollary~\ref{coro:convergencetoequilibrium} if~$\gamma=0$.
\end{proof}

We now state the explicit representation of the ancestral type distribution for $\gamma>0$.

\begin{proposition}[Ancestral type distribution with interaction]\label{prop:ancestraldistributionfinite}
	Suppose~$\nu_0=0$. Let $\gamma>0$, and~$\bar{y}_-, \bar{y}_+$ be given as in \eqref{eq:equilibria}. Set~$\sigma\defeq((\gamma+s)^2-4u\gamma)/\gamma^2.$
	For~$y_0\in \{\bar{y}_-,\bar{y}_+,1\}\cap[0,1],$ we have
	\begin{equation}
		g_r(y_0)=y_0\exp\left(-r(1-y_0)\big(s+\gamma(1-y_0)\big)\right). \label{eq:grforequi}
	\end{equation}
	For~$y_0\in [0,1]\setminus\{\bar{y}_-,\bar{y}_+,1\}$ and 
	\begin{enumerate}
		\item for $u<{u^\star}$,
		\begin{equation}\label{eq:grforequiusustar}
			g_r(y_0)\ =\ y_0 \left(\frac{\bar{y}_--y(r;y_0)}{\bar{y}_--y_0} \right)^{\frac{\bar{y}_+}{\sqrt{\sigma}}}  \bigg(\frac{\bar{y}_+-y_0}{\bar{y}_+-y(r;y_0)}\bigg)^{\frac{\bar{y}_-}{\sqrt{\sigma}}} ;
		\end{equation}
		\item for $u={u^\star}$ (recall that then $\bar{y}_-=\bar{y}_+$),
		\begin{equation}\label{eq:grforequiueustar}
			g_r(y_0)\ =\ y_0 \, \frac{y(r;y_0)-\bar{y}_-}{y_0-\bar{y}_-} \exp\bigg(- \bar{y}_-\,\frac{y(r;y_0)-y_0}{(y(r;y_0)-\bar{y}_-)(y_0-\bar{y}_-)} \bigg);		\end{equation}
		\item for $u>{u^\star},$
		\begin{align}
				g_r(y_0)\ =\ &y_0\ \sqrt{\frac{u-y(r;y_0)\big(s+\gamma\big(1-y(r;y_0)\big)\big) }{u-y_0\big(s+\gamma(1-y_0)\big) }}  \nonumber \\
				&\times \exp\left(-\frac{\gamma+s}{\gamma\sqrt{-\sigma}}  \left(\arctan\left(\frac{2y(r;y_0)-\frac{\gamma+s}{\gamma}}{\sqrt{-\sigma}}\right)-\arctan\left(\frac{2y_0-\frac{\gamma+s}{\gamma} }{\sqrt{-\sigma}}\right)\right)
				\right).\label{eq:grforequiugustar}
		\end{align}
	\end{enumerate}
\end{proposition}

\begin{proof}
Assume $\gamma>0$. Consider first $y_0\in \{1,\bar{y}_-,\bar{y}_+\}\cap [0,1]$. Here, $y(\,\cdot\,;y_0)\equiv y_0$ so that $$- \int_0^r \big(1-y(\xi;y_0)\big)\big(s+\gamma (1-y(\xi;y_0) )\big) \dd \xi=-r(1-y_0)(s+\gamma(1-y_0)).$$ This leads to~\eqref{eq:grforequi} via Theorem~\ref{thm:ancestraldistributionfinite}. For $y_0\in [0,1]\setminus \{1,\bar{y}_-,\bar{y}_+\}$, we apply classical integration techniques to~\eqref{eq:ancestraltypedistribution}. Consider (1) and assume first that~$u<s(\leq u^\star)$ so that $\bar{y}_-<1<\bar{y}_+$ (recall Table~\eqref{table:stability}). For~$y_0<\bar{y}_-$,~$y(r;y_0)$ is increasing. By substituting~$\eta=y(\xi,y_0)$, using~\eqref{eq:dlimitdiffeq}, and partial fraction decomposition (where it is useful to note that $\bar{y}_+,\bar{y}_-$ satisfy $u-y(s+\gamma(1-y))=\gamma\,(y-\bar{y}_+)(y-\bar{y}_-)$), we obtain 
	\begin{align*}
		- \int_0^r \big(1-y(\xi;y_0)\big) & \big(s+\gamma (1-y(\xi;y_0) )\big) \dd \xi\\
		&= -\int_{y_0}^{y(r;y_0)} \frac{s+\gamma(1-\eta)}{u-\eta (s+\gamma(1-\eta))}\dd \eta\\
		&=-\frac{1}{\gamma}\frac{s+\gamma(1-\bar{y}_-)}{\bar{y}_+-\bar{y}_-}  \int_{y_0}^{y(r;y_0)} \frac{1}{\bar{y}_--\eta}\dd \eta+\frac{1}{\gamma}\frac{s+\gamma(1-\bar{y}_+)}{\bar{y}_+-\bar{y}_-}  \int_{y_0}^{y(r;y_0)} \frac{1}{\bar{y}_+-\eta}\dd \eta\\
		&=\frac{1}{\gamma}\frac{s+\gamma(1-\bar{y}_-)}{\bar{y}_+-\bar{y}_-}  \log\bigg(\frac{\bar{y}_--y(r;y_0)}{\bar{y}_--y_0} \bigg)-\frac{1}{\gamma}\frac{s+\gamma(1-\bar{y}_+)}{\bar{y}_+-\bar{y}_-} \log\bigg(\frac{\bar{y}_+-y(r;y_0)}{\bar{y}_+-y_0} \bigg).
	\end{align*}
	Note that $s/\gamma+(1-\bar{y}_-)=\bar{y}_+$, $s/\gamma+(1-\bar{y}_+)=\bar{y}_-$, and $\bar{y}_+-\bar{y}_-=\sqrt{\sigma}>0$ so that~\eqref{eq:grforequiusustar} follows from Theorem~\ref{thm:ancestraldistributionfinite}in this case. A similar argument applies if~$1>y_0>\bar{y}_-$; but now~$y(r;y_0)$ is decreasing. For (1) with $u\in [s,u^{\star})$, we can proceed similarly; the only subtlety lies in the monotonicity of~$y(r;y_0)$ depending on~$y_0$ and the equilibria.
	Next consider (2), i.e. $u=u^{\star}$. Here, we have $\bar{y}_-=\bar{y}_+$, $\sigma=0$, and~$y(r;y_0)$ is increasing for all~$y_0\in [0,1]$. Hence,
	\begin{align*}
		-\int_{y_0}^{y(r;y_0)} \frac{s+\gamma(1-\eta)}{u-\eta (s+\gamma(1-\eta))}\dd \eta&=\int_{y_0}^{y(r;y_0)} \left(-\frac{s+\gamma(1-\bar{y}_-)}{\gamma(\eta-\bar{y}_-)^2}+\frac{1}{\eta-\bar{y}_-}\right)\dd \eta\\
		&=\bar{y}_-\bigg(\frac{1}{y(r;y_0)-\bar{y}_-}-\frac{1}{y_0-\bar{y}_-} \bigg)+\log\bigg(\frac{y(r;y_0)-\bar{y}_-}{y_0-\bar{y}_-} \bigg). 
	\end{align*}
	This leads to~\eqref{eq:grforequiueustar}. Finally, we treat (3), i.e.~$u>u^{\star}$, where $\sigma<0$. Again,~$y(r;y_0)$ is increasing. Here,
	\begin{align*}
		-\int_{y_0}^{y(r;y_0)} & \frac{s+\gamma(1-\eta)}{u-\eta (s+\gamma(1-\eta))}\dd \eta\\
		&=\frac{1}{2}\int_{y_0}^{y(r;y_0)} \frac{-(s+\gamma)+2\gamma\eta}{u-(s+\gamma)\eta+\gamma \eta^2}\dd \eta-\frac{1}{2}\int_{y_0}^{y(r;y_0)} \frac{s+\gamma}{u-(s+\gamma)\eta+\gamma \eta^2}\dd \eta\\
		&=\frac{1}{2}\log \Bigg(\frac{u-y(r;y_0)\big(s+\gamma\big(1-y(r;y_0)\big)\big) }{u-y_0\big(s+\gamma(1-y_0)\big) }\Bigg)-\frac{1}{2}\int_{y_0}^{y(r;y_0)}\frac{s+\gamma}{u-(s+\gamma)\eta+\gamma \eta^2}\dd \eta.
	\end{align*}
	In the last term, we substitute~$\mu=\phi(\eta)\defeq (2\eta-(1+\frac{s}{\gamma}))/\sqrt{-\sigma}$ and obtain
	\begin{align*}
		-\frac{1}{2}\int_{y_0}^{y(r;y_0)}\frac{s+\gamma}{u-(s+\gamma)\eta+\gamma \eta^2}\dd \eta &=-\frac{1}{\sqrt{-\sigma}}\Big(1+\frac{s}{\gamma} \Big)\int_{\phi(y_0)}^{\phi(y(r;y_0))} \frac{1}{1+\mu^2}\dd \mu\\
		&=-\frac{1}{\sqrt{-\sigma}}\Big(1+\frac{s}{\gamma} \Big)\Big[\arctan\big(\phi(y(r;y_0))\big)-\arctan\big(\phi(y_0)\big)\Big].
	\end{align*}
	This gives~\eqref{eq:grforequiugustar} and ends the proof.\end{proof}

Explicit expressions for the long-term ancestral type distribution can now be obtained from Proposition~\ref{prop:ancestraldistributionfinite}.
\begin{coro}[Long-term ancestral type distribution]\label{coro:ancestraltypedistribution}
	Let~$\nu_0=0$ and~$\gamma>0$. Then for all $y_0\in [0,1]$, $\lim_{r\to\infty}g_r(y_0)$ exists. Moreover, $g_{\infty}(1)=1$ and for $y_0\in [0,1)$ and 
	\begin{enumerate}[label=(\roman*)]
		\item~$u<{u^\star}$, we have \begin{equation}
			g_{\infty}(y_0)\ =\ \1_{\{y_0>\bar{y}_+\}}y_0 \bigg(\frac{1-\bar{y}_-}{y_0-\bar{y}_-} \bigg)^{\frac{\bar{y}_+}{\sqrt{\sigma}}}  \bigg(\frac{y_0-\bar{y}_+}{1-\bar{y}_+}\bigg)^{\frac{\bar{y}_-}{\sqrt{\sigma}}};
		\end{equation}
		\item~$u={u^\star}$ (recall that then $\bar{y}_-=\bar{y}_+$), we have 
		\begin{equation}
			g_{\infty}(y_0)\ =\ \1_{\{y_0>\bar{y}_-\}} y_0 \, \frac{1-\bar{y}_-}{y_0-\bar{y}_-} \exp\bigg(- \bar{y}_-\frac{1-y_0}{(1-\bar{y}_-)(y_0-\bar{y}_-)} \bigg);
		\end{equation}
		\item~$u>{u^\star}$, we have 
		\begin{align}
			g_{\infty}(y_0)\ =\ &y_0\, \sqrt{\frac{u-s}{u-y_0(s+\gamma(1-y_0))}} 
			\, \exp\left(-\frac{\gamma+s}{\gamma \sqrt{-\sigma}}\left[\arctan\left(\frac{1-\frac{s}{\gamma } }{\sqrt{-\sigma}}\right)-\arctan\left(\frac{2y_0-\frac{\gamma+s}{\gamma } }{\sqrt{-\sigma}}\right)\right]\right).
		\end{align}
	\end{enumerate}
\end{coro}

\begin{proof}
	Combining Corollary~\ref{coro:convergencetoequilibrium} with Proposition~\ref{prop:ancestraldistributionfinite} yields the result.
\end{proof}

Finally, we consider the ancestral type distribution at equilibrium, i.\,e.\,\,$$\big (1-g_{\infty}(y_{\infty}(y_0)),g_{\infty}(y_{\infty}(y_0))\big ).$$ 
\begin{proof}[Proof of Corollary~\ref{coro:ancestraldistributionequilibrium} (long-term ancestral type distribution at equilibrium)]
	In the following, we throughout use Corollaries~\ref{coro:convergencetoequilibrium} and~\ref{coro:ancestraltypedistribution}.
	For $y_0=1$, we have~$y_{\infty}(1)=1$ and hence~$g_{\infty}(y_{\infty}(1))=1$ regardless of~$u$. For the remainder, assume~$y_0\in [0,1)$. In $\Theta_1$, $y_{\infty}(y_0)=1$ so~$g_{\infty}(y_{\infty}(y_0))\equiv 1$. In $\Theta_2^a$, we have~$y_{\infty}(y_0)=\bar{y}_-$. Moreover in $\Theta_2^a$, $u<u^\star$ so that~$g_{\infty}(y_{\infty}(y_0))=0$. In $\Theta_2^b$ (recall that here, $\bar{y}_-=\bar{y}_+$), $y_{\infty}(y_0)=\bar{y}_+$ if $y_0\leq\bar{y}_+$, and $y_{\infty}(y_0)=1$ if $y_0>\bar{y}_+$. Moreover in $\Theta_2^b$, $u=u^\star$. Hence, ~$g_{\infty}(y_{\infty}(y_0))=\1_{\{y_0>\bar{y}_+\}}$. In $\Theta_3$, $g_{\infty}(y_0)=\bar{y}_-$ if $y_0<\bar{y}_+$, and $g_{\infty}(y_0)=1$ if $y_0>\bar{y}_+$. Moreover in $\Theta_3$, $u<u^\star$ so that~$g_{\infty}(y_{\infty}(y_0))=\1_{\{y_0>\bar{y}_+\}}$.
\end{proof}

\subsection{A forward approach to the ancestral type distribution}\label{sec:anctype:forward}
The expression for the ancestral type distribution in~\eqref{eq:ancestraltypedistribution} arises also as the ancestral type distribution in an alternative forward model. We first explain this result and then exhibit the connection to the original model.

\smallskip

Fix $y_0\in [0,1]$ and $f_0> 0$ (and let as before $u>0$ and $\gamma,s\geq 0$). Consider the solution $(z_0(t),z_1(t))_{t\geq 0}$ to the IVP given by $z_0(0)=(1-y_0)f_0$,  $z_1(0)=y_0f_0$,
\begin{equation} \frac{\dd z_0}{\dd t}(t)=z_0(t)\left( s - u+\gamma\frac{z_0(t)}{z_0(t)+z_1(t)} \right),\quad \text{and} \quad \frac{\dd z_1}{\dd t}(t)=u\, z_0(t). \label{eq:alternativemodel} 
\end{equation}
In this model, $z_0(t)$ ($z_1(t)$) is the \emph{absolute frequency} at time~$t$ of the fit (the unfit) type in a population with variable size in which only the fit type reproduces, no one dies (in particular, reproduction is not coupled to the death of another individual), and there are only deleterious mutations. It is convenient to think of it as arising under a law of large numbers of a two-type branching process with interaction in which only the fit type reproduces. Let $f(t)\defeq z_0(t)+z_1(t)$ be the \emph{population size} at time~$t$. If $\tilde y(t)\defeq z_1(t)/f(t)$ denotes the relative frequency of the unfit type, $f(t)$ satisfies $$\frac{\dd }{\dd t} f(t) = \frac{\dd }{\dd t}\big(z_0(t)+ z_1(t)\big)=(1-\tilde y(t))(s+\gamma(1-\tilde y(t))f(t),$$ augmented by $f(0)=z_0(0)+z_1(0)=f_0$. Hence, \begin{equation}
	f(t)=f_0\exp\left(\int_0^t\Big(1-\tilde y(\xi)\Big)\Big(s+\gamma\big(1-\tilde y(\xi)\big)\Big)\dd \xi\right).\label{eq:popsize}
\end{equation} For the ancestral type distribution of this model, the key is to think in terms of absolute frequencies. Since the individuals unfit at time~$0$ neither reproduce nor mutate, the size of their descendant population remains constant at $y_0f_0$, while the population size grows by a factor of $f(t)/f_0$. It is therefore clear that the proportion of individuals at time~$t$ that have unfit ancestors at time~$0$ is~$y_0 f_0/ f(t)=g_r(y_0)$, in line with~\eqref{eq:ancestraltypedistribution}.

\smallskip

The mutation-selection equation arises from~\eqref{eq:alternativemodel} as the dynamics of the relative frequency  of the unfit type. To see this, check that $\tilde y(t)$ indeed satisfies~\eqref{eq:dlimitdiffeq} with $\tilde y(0)=y_0$. Conversely, starting from the solution of~\eqref{eq:dlimitdiffeq} with $y_0\in[0,1]$, we obtain \eqref{eq:alternativemodel} via the transformation, 
\begin{equation}\label{eq:thompson}
	z_0(t)\defeq (1-y(t; y_0)) \, f(t) \quad \text{and}\quad z_1(t)\defeq y(t;y_0)\,  f(t),
\end{equation} for $f(t)$ in~\eqref{eq:popsize} for some $f_0> 0$. For the noninteractive case, the transformation~\eqref{eq:thompson} goes back to \citet{Thompson} and  is frequently used in deterministic population genetics; it allows to transform the quadratic system~\eqref{eq:dlimitdiffeq}  into a linear one.

\section*{Acknowledgements}
We are grateful to Jan Swart for making us aware of the connection to the cooperative branching process and for many helpful comments on an earlier version of the manuscript, which helped to shape the concept of the stratified ASG and triggered the introduction of the tripod trees. We also thank him and Anja Sturm for an open exchange and stimulating discussions related to their work in progress. It is our pleasure to thank Anton Wakolbinger for helpful discussions. We are grateful to two unknown referees for helpful comments on the manuscript. This project received financial support from Deutsche Forschungsgemeinschaft~(CRC 1283 `Taming Uncertainty`, Project~C1).

\end{document}